\newcommand{\E}{\mathbb{E}}
\newtheorem{theorem}{Theorem}[section]
\newtheorem{remark}{Remark}[section]
\newtheorem{lemma}{Lemma}[section]
\newtheorem{proposition}[theorem]{Proposition}
\newtheorem{exam}{Example}[section]
\numberwithin{equation}{section}
\title{Preasymptotic Convergence of Randomized Kaczmarz Method}
\author{Yuling Jiao\thanks{School of Statistics and Mathematics, Zhongnan University of Economics and Law, Wuhan, 430063, P.R. China. (yulingjiaomath@whu.edu.cn)}\quad\and
Bangti Jin\thanks{Department of Computer Science, University College London, Gower Street, London WC1E 6BT, UK. (bangti.jin@gmail.com, b.jin@ucl.ac.uk)}
\quad\and Xiliang Lu\thanks{Corresponding author. School of Mathematics and Statistics, and
Hubei Key Laboratory of Computational Science, Wuhan University, Wuhan 430072, P.R. China. (xllv.math@whu.edu.cn)}
}
\begin{document}
\maketitle
\begin{abstract}
Kaczmarz method is one popular iterative method for solving inverse problems, especially in computed tomography. Recently,
it was established that a randomized version of the method enjoys an exponential convergence for well-posed problems, and
the convergence rate is determined by a variant of the condition number. In this work, we analyze the preasymptotic
convergence behavior of the randomized Kaczmarz method, and show that the low-frequency error (with respect to the right
singular vectors) decays faster during first iterations than the high-frequency error. Under the assumption that the
initial error is smooth (e.g., sourcewise representation), the results allow explaining the fast empirical convergence behavior,
thereby shedding new insights into the excellent performance of the randomized Kaczmarz method in practice. Further, we
propose a simple strategy to stabilize the asymptotic convergence of the iteration by means of variance reduction. We
provide extensive numerical experiments to confirm the analysis and to elucidate the behavior of the algorithms.\\
{\bf Keywords}: randomized Kaczmarz method; preasymptotic convergence; smoothness; error estimates; variance reduction
\end{abstract}

\section{Introduction}

Kaczmarz methood \cite{Kaczmarz:1937}, named after Polish mathematician Stefan Kaczmarz, is one  popular
iterative method for solving linear systems. It is a special form of the general alternating projection method.
In the computed tomography (CT) community, it was rediscovered in 1970 by Gordon, Bender and Herman \cite{Gordon:1970},
under the name algebraic reconstruction techniques. It was implemented in the very first  medical CT scanner, and since
then it has been widely employed in CT reconstructions \cite{HermanLentLutz:1978,HermanMeyer:1993,Natterer:1986}.

The convergence of Kaczmarz method for {consistent} linear systems is not hard to
show. However, the theoretically very important issue of convergence rates of Kaczmarz method (or
the alternating projection method for linear subspaces) is very challenging. There are several known convergence rates
results, all relying on (spectral) quantities of the matrix $A$ that are difficult to compute or verify in practice (see
\cite{Galantai:2005} and the references therein). This challenge is well reflected by the fact that the convergence rate of
the method depends strongly on the ordering of the equations.

It was numerically discovered several times independently in the literature that using the rows of the matrix $A$ in Kaczmarz
method in a random order, called randomized Kaczmarz method (RKM) below, rather than the given order, can often
substantially improve the convergence \cite{HermanMeyer:1993,Natterer:1986}. Thus RKM is quite appealing for practical
applications. However, the convergence rate analysis was given only very recently. In an
 influential paper \cite{StrohmerVershynin:2009}, in 2009, Strohmer and Vershynin established the exponential
convergence of  RKM for consistent linear systems, and the convergence rate depends on (a
variant of) the condition number. This result was then extended and refined in various directions \cite{Needell:2010,
EldarNeedell:2011,MaNeedellRamdas:2015,AgaskarWangLu:2014,WangAgaskarLu:2015,GowerRichtarik:2015,
SchopferLorenz:2016}, including inconsistent or underdetermined linear systems. Recently, Sch\"{o}pfer and Lorenz
\cite{SchopferLorenz:2016} showed the exponential convergence for RKM for
sparse recovery with elastic net. We recall the result of Strohmer and Vershynin and its counterpart for noisy data in
Theorem \ref{thm:rkm} below. It is worth noting that all these estimates involve the
condition number, and for noisy data, the estimate contains a term inversely
proportional to the smallest singular value of the matrix $A$.

These important and interesting existing results do not fully explain the excellent empirical performance of RKM for
solving linear inverse problems, especially in the case of noisy data, where the term due to noise is amplified by
a factor of the condition number. In practice, one usually observes that the iterates first converge quickly to a  good approximation to the true
solution, and then start to diverge slowly. That is, it exhibits the typical ``semiconvergence'' phenomenon for
iterative regularization methods, e.g., Landweber method and conjugate gradient methods \cite{Hansen:1998,
KaltenbacherNeubauerScherzer:2008}. This behavior is not well reflected in the known estimates given in Theorem
\ref{thm:rkm}; see Section \ref{sec:Kaczmarz} for further comments.

The purpose of this work is to study the preasymptotic convergence behavior of RKM. This is achieved by analyzing
carefully the evolution of the low- and high-frequency errors during the randomized Kaczmarz iteration, where the
frequency is divided according to the right singular vectors of the matrix $A$. The results indicate that during
initial iterations, the low-frequency error decays must faster than the high-frequency one, cf. Theorems
\ref{thm:err-exact} and \ref{thm:err-noise}. Since the inverse solution (relative to the initial guess $x_0$) is
often smooth in the sense that it consists mostly of low-frequency components \cite{Hansen:1998}, it explains
the good convergence behavior of RKM, thereby shedding new insights into its excellent practical performance. This
condition on the inverse solution is akin to the sourcewise representation condition in classical regularization
theory \cite{EnglHankeNeubauer:1996,ItoJin:2015}. Further, based on the fact that RKM is a special case of the
stochastic gradient method \cite{RobbinsMonro:1951}, we propose a simple modified version using the idea of variance
reduction by hybridizing it with the Landweber method, inspired by \cite{JohnsonZhang:2013}. This variant enjoys
both good preasymptotic and asymptotic convergence behavior, as indicated by the numerical experiments.

Last, we note that in the context of inverse problems, Kaczmarz method has received much recent attention,
and has demonstrated very encouraging results in a number of applications. The regularizing property and
convergence rates in various settings have been analyzed for both linear and nonlinear inverse problems
(see \cite{KowarScherzer:2002,BurgerKaltenbacher:2006,HaltmeierLeitaoScherzer:2007,JinWang:2013,ElfvingHansenNikazad:2014,
KindermannLeitao:2014,LeitaoScaiter:2016,Jin:2016} for an incomplete list). However, these interesting works
all focus on a fixed ordering of the linear system, instead of the randomized variant under consideration here,
and thus they do not cover RKM.

The rest of the paper is organized as follows. In Section \ref{sec:Kaczmarz} we describe RKM and recall the basic
tool for our analysis, i.e., singular value decomposition, and a few useful notations. Then in Section
\ref{sec:conv} we derive the preasymptotic convergence rates for exact and noisy data. Some practical issues are
discussed in Section \ref{sec:implement}. Last, in Section \ref{sec:numer}, we present extensive numerical
experiments to confirm the analysis and shed further insights.

\section{Randomized Kaczmarz method}\label{sec:Kaczmarz}
Now we describe the problem setting and RKM, and also recall known convergence rates results for
both consistent and inconsistent data. The linear inverse problem with exact data can be cast into
\begin{equation}\label{eqn:lin}
  Ax = b,
\end{equation}
where the matrix $A\in \mathbb{R}^{n\times m}$, and $b\in\mathbb{R}^n$ and $b\in \mathrm{range}(A)$.
We denote the $i$th row of the matrix $A$ by $a_i^t$, with $a_i\in\mathbb{R}^m$ being a column vector,
where the superscript $t$ denotes the vector/matrix transpose. The linear system \eqref{eqn:lin} can
be formally determined or under-determined.

The classical Kaczmarz method \cite{Kaczmarz:1937} proceeds as follows. Given the initial guess $x_0$, we iterate
\begin{equation}\label{eqn:Kaczmarz}
  x_{k+1}= x_k + \frac{b_i-\langle a_i,x_k\rangle}{\|a_i\|^2} a_i,\quad i=(k\ \mathrm{ mod }\ n)+1,
\end{equation}
where $\langle\cdot,\cdot\rangle$ and $\|\cdot\|$ denote the Euclidean inner product and norm, respectively.
Thus, Kaczmarz method sweeps through the equations in a cyclic manner, and $n$ iterations constitute one complete cycle.

In contrast to the cyclic choice of the index $i$ in Kaczmarz method, RKM randomly selects $i$. There
are several different variants, depending on the specific random choice of the index $i$. The variant
analyzed by Strohmer and Vershynin \cite{StrohmerVershynin:2009} is as follows. Given an initial guess $x_0$, we iterate
\begin{equation}\label{eqn:rkm}
  x_{k+1}= x_{k} + \frac{b_i-\langle a_i,x_k\rangle}{\|a_i\|^2}a_i,
\end{equation}
where $i$ is drawn independent and identically distributed (i.i.d.) from the index set $\{1,2,\ldots,n\}$ with the
probability $p_i$ for the $i$th row given by
\begin{equation}\label{eqn:probability}
  p_i=\frac{\|a_i\|^2}{\|A\|_F^2},\quad i=1,\ldots,n,
\end{equation}
where $\|\cdot\|_F$ denotes the matrix Frobenius norm. This choice of the probability distribution $p_i$ lends itself to
a convenient convergence analysis \cite{StrohmerVershynin:2009}. In this work, we shall focus on the variant
\eqref{eqn:rkm}-\eqref{eqn:probability}.

Similarly, the noisy data $b^\delta$ is given by
\begin{equation}\label{eqn:noisy-data}
  b_i^\delta=\langle a_i,x^*\rangle+\eta_i,\quad i=1,\ldots,n,\qquad \mbox{with }\ \ \|\eta\| \leq \delta,
\end{equation}
where $\delta$ is the noise level. RKM reads: given the initial guess $x_0$, we iterate
\begin{equation*}
  x_{k+1} = x_k + \frac{b_i^\delta-\langle a_i,x_k\rangle}{\|a_i\|^2}a_i,
\end{equation*}
where the index $i$ is drawn i.i.d. according to \eqref{eqn:probability}.

The following theorem summarizes typical convergence results of RKM for consistent and inconsistent
linear systems \cite{StrohmerVershynin:2009,Needell:2010,ZouziasFreris:2013} (see \cite{MaNeedellRamdas:2015}
for in-depth discussions), under the condition that the matrix $A$ is of {full column-rank}. For a
rectangular matrix $A\in\mathbb{R}^{n\times m}$, we denote by $A^{\dag}\in\mathbb{R}^{m\times n}$ the pseudoinverse of $A$, $\|A\|_2$ denotes the matrix
spectral norm, and $\sigma_\mathrm{min}(A)$ the smallest singular value of $A$. The error $\|x_k-x^*\|$
of the RKM iterate $x_k$ (with respect to the exact solution $x^*$) is stochastic due to the random choice of the index $i$. Below $\E[\cdot]$
denotes expectation with respect to the random row index selection. Note that $\kappa_A$ differs from the usual
condition number \cite{GolubVanLoan:2013}.
\begin{theorem}\label{thm:rkm}
Let $x_k$ be the solution generated by RKM \eqref{eqn:rkm}--\eqref{eqn:probability} at
iteration $k$, and $\kappa_A=\|A\|_F\|A^{\dag}\|_2$ be a (generalized) condition number. Then the following statements hold.
\begin{itemize}
  \item[(i)] For exact data, there holds
  \begin{equation*}
   \E[\|x_k-x^*\|^2] \leq \left(1-\kappa_A^{-2}\right)^k\|x_0-x^*\|^2.
  \end{equation*}
  \item[(ii)]For noisy data, there holds
  \begin{equation*}
    \E[\|x_k-x^*\|^2] \leq \left(1-\kappa_A^{-2}\right)^k\|x_0-x^*\|^2 + \frac{\delta^2}{\sigma_{\min}^2(A)}.
  \end{equation*}
\end{itemize}
\end{theorem}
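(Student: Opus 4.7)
The plan is to exploit the geometric structure of one RKM step: the map $x_k \mapsto x_{k+1}$ is an orthogonal projection (with a possible noise offset) onto the affine hyperplane $\{y : \langle a_i, y\rangle = b_i^\delta\}$. Combined with the fact that the sampling distribution \eqref{eqn:probability} was designed so that row-weights $\|a_i\|^2$ cancel against the $\|a_i\|^2$ in the denominator of the step, one conditional expectation will produce a contraction factor expressed in terms of $\|A\|_F$ and $\sigma_{\min}(A)$, which is exactly $\kappa_A^{-2}$. The proof then reduces to iterating this one-step bound.

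For part (i), I would start from the identity $x_{k+1}-x^\ast = (I - a_i a_i^t/\|a_i\|^2)(x_k-x^\ast)$, which follows because consistency gives $b_i = \langle a_i, x^\ast\rangle$. Since $a_i a_i^t/\|a_i\|^2$ is an orthogonal projector, Pythagoras yields
\begin{equation*}
\|x_{k+1}-x^\ast\|^2 = \|x_k-x^\ast\|^2 - \frac{|\langle a_i, x_k-x^\ast\rangle|^2}{\|a_i\|^2}.
\end{equation*}
Taking conditional expectation over $i$ with probabilities $p_i = \|a_i\|^2/\|A\|_F^2$, the $\|a_i\|^2$ factors cancel and the sum of squared residuals assembles into $\|A(x_k-x^\ast)\|^2/\|A\|_F^2$. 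Using full column-rank and the inequality $\|Az\|\geq \sigma_{\min}(A)\|z\|$, together with the identification $\kappa_A^{-2} = \sigma_{\min}^2(A)/\|A\|_F^2$, gives a one-step contraction with factor $(1-\kappa_A^{-2})$. Iterating over $k$ and applying the tower property produces the stated bound.

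For part (ii), the iteration becomes $x_{k+1}-x^\ast = (I - a_i a_i^t/\|a_i\|^2)(x_k-x^\ast) + (\eta_i/\|a_i\|^2)\,a_i$. The two summands are orthogonal (the first lies in $\mathrm{ker}(a_i^t)$, the second along $a_i$), so by Pythagoras
\begin{equation*}
\|x_{k+1}-x^\ast\|^2 = \|(I-a_i a_i^t/\|a_i\|^2)(x_k-x^\ast)\|^2 + \frac{\eta_i^2}{\|a_i\|^2}.
\end{equation*}
Taking conditional expectation, the $\|a_i\|^2$ terms again cancel against $p_i$ and the noise contribution becomes $\|\eta\|^2/\|A\|_F^2 \leq \delta^2/\|A\|_F^2$, producing the recursion $\E[\|x_{k+1}-x^\ast\|^2] \leq (1-\kappa_A^{-2})\E[\|x_k-x^\ast\|^2] + \delta^2/\|A\|_F^2$. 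Unrolling gives a geometric series in $(1-\kappa_A^{-2})^j$ whose sum is bounded by $\kappa_A^2 = \|A\|_F^2/\sigma_{\min}^2(A)$, converting the $\delta^2/\|A\|_F^2$ prefactor into the advertised $\delta^2/\sigma_{\min}^2(A)$.

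The main obstacle, and the place where care is needed, is the book-keeping in the noisy case: one must verify the orthogonal decomposition cleanly (to avoid a cross term that would couple $x_k-x^\ast$ with the noise) and then bound the accumulated noise via a geometric series. Everything else is essentially a one-line application of Pythagoras followed by exploiting the row-norm sampling weights. Full column-rank is used in exactly one place, the spectral lower bound $\|Az\|^2 \geq \sigma_{\min}^2(A)\|z\|^2$; without it, $x_k-x^\ast$ could have a nontrivial component in $\ker(A)$ and the contraction would fail.
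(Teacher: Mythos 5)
Your proof is correct and follows the standard route: the paper does not actually prove Theorem \ref{thm:rkm} but recalls it from \cite{StrohmerVershynin:2009,Needell:2010,ZouziasFreris:2013}, and your argument (Pythagoras for the orthogonal projection step, cancellation of the row norms against the sampling weights $p_i=\|a_i\|^2/\|A\|_F^2$ so that $\sum_i a_ia_i^t=A^tA$ assembles $\|A(x_k-x^*)\|^2$, the lower bound $\|Az\|\ge\sigma_{\min}(A)\|z\|$ from full column rank, and the geometric-series bound $\sum_j(1-\kappa_A^{-2})^j\le\kappa_A^2$ for the noise term) is exactly the proof given in those references and the same conditional-expectation device the paper itself uses in Theorems \ref{thm:err-exact} and \ref{thm:err-noise}. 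No gaps.
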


Theorem \ref{thm:rkm} gives error estimates (in expectation) for any iterate $x_k$, $k\geq 1$: the convergence
rate is determined by $\kappa_A$. For ill-posed linear inverse problems (e.g., CT), bad conditioning
is characteristic and the condition number $\kappa_A$ can be huge, and thus the theorem predicts a very slow
convergence. However, in practice, RKM converges rapidly during the initial iteration. The estimate
is also deceptive for noisy data: due to the presence of the term $\delta^2/\sigma^2_{\min}(A)$,
it implies blowup at the very first iteration, which is however not the case in practice. Hence,
these results do not fully explain the excellent empirical convergence  of RKM for inverse problems.

The next example compares the convergence rates of Kaczmarz method and RKM.
\begin{exam}
Given $n\geq 2$, let $\theta=\frac{2\pi}{n}$. Consider the linear system with $A\in\mathbb{R}^{n\times 2}$,
$a_i=(\begin{array}{c}\cos(i-1)\theta\\ \sin(i-1)\theta\end{array})$ and the exact solution $x^*=0$, i.e.,
$b=0$. Then we solve it by Kaczmarz method and RKM. For any $e_0=(x_0,y_0)$, after one Kaczmarz iteration,
$e_1=(x_0, 0)$, and generally, after $k$ iterations,
\begin{equation*}
  \|e_{k+1}\| = |\cos\theta|^{k}\|e_1\|.
\end{equation*}
For large $n$, the decreasing factor $|\cos\theta|$ can be very close to one, and thus each Kaczmarz iteration
can only decrease the error slowly. Thus, the convergence rate of Kaczmarz method depends strongly on $n$: the
larger is $n$, the slower is the convergence. Similarly, for RKM, there holds
\begin{equation*}
    \E[\|e_{k+1}\|^2|e_k] = \frac{1}{n}\sum_{i=1}^n|\cos i\theta|^2\|e_k\|^2
     =\frac{1}{2n}\sum_{i=1}^n(1-\cos 2i\theta)\|e_k\|^2=\frac{1}{2}\|e_k\|^2,
\end{equation*}
and
\begin{equation*}
    \E[\|e_{k+1}\|^2]  = 2^{-(k+1)}\|e_0\|^2.
\end{equation*}
For RKM, the convergence rate is independent of $n$. Further, for any $n> 8$, we have $0<\theta<\frac{\pi}{4}$,
and $\cos \theta\geq |\cos \frac{\pi}{4}|>2^{-1/2}$. This shows the superiority of RKM over the cyclic one.
\end{exam}

Last we recall singular value decomposition (SVD) of the matrix $A$ \cite{GolubVanLoan:2013}, which is the basic
tool for the convergence analysis in Section \ref{sec:conv}. We denote SVD of $A\in\mathbb{R}^{n\times m}$ by
\begin{equation*}
  A = U\Sigma V^t,
\end{equation*}
where $U\in \mathbb{R}^{n\times n}$ and $V\in \mathbb{R}^{m\times m}$ are column orthonormal matrices and their column
vectors known as the left and right singular vectors, respectively, and $\Sigma \in\mathbb{R}^{n\times m}$ is
diagonal with the diagonal elements ordered nonincreasingly, i.e., $\sigma_1\geq \ldots\geq \sigma_r> 0$, with $r=\min(m,n)$.
The right singular vectors $v_i$ span the solution space, i.e., $x\in \mathrm{span}(v_i)$. We shall write
\begin{equation*}
  U = \left(\begin{array}{c}
    u_1^t\\ \vdots\\ u_n^t
  \end{array}\right)\quad\mbox{and}\quad V^t=\left(\begin{array}{c}
    v_1^t\\ \vdots\\ v_m^t
  \end{array}\right),
\end{equation*}
i.e., $V=(v_1\ \ldots \ v_m)$. Note that for inverse problems, empirically, as the index $i$ increases, the
right singular vectors $v_i$ are increasingly more oscillatory, capturing more high-frequency components \cite{Hansen:1998}.
The behavior is analogous to the inverse of Sturm-Liouville operators. For a general class of convolution integral
equations, such oscillating behavior was  established in \cite{FaberManteuffel:1986}. For many practical
applications, the linear system \eqref{eqn:lin} can be regarded as a discrete approximation to the underlying
continuous problem, and thus inherits the corresponding spectral properties.

Given a frequency cutoff number $1\leq L\leq m$, we define two (orthogonal) subspaces of $\mathbb{R}^m$ by
\begin{equation*}
  \mathcal{L}=\mathrm{span}\{v_1,\ldots,v_L\}\quad\mbox{and}\quad \mathcal{H}=\mathrm{span}\{v_{L+1},\ldots,v_m\},
\end{equation*}
which denotes the low- and high-frequency solution spaces, respectively. This is motivated by the
observation that in practice one only looks for smooth solutions that are spanned/well captured
by the first few right singular vectors \cite{Hansen:1998}. This condition is akin to the concept
of sourcewise representation in regularization theory, e.g., $x\in A^*w$ for some $w\in\mathbb{R}^n$
or its variants \cite{EnglHankeNeubauer:1996,ItoJin:2015}, which is needed for deriving convergence rates
for the regularized solution. Throughout, we always assume that the
truncation level $L$ is fixed. Then for any vector $z\in\mathbb{R}^m$, there exists a unique
decomposition $z=P_Lz+P_Hz$, where $P_L$ and $P_H$ are orthogonal projection operators into
$\mathcal{L}$ and $\mathcal{H}$, respectively, which are defined by
\begin{equation*}
  P_Lz =\sum_{i=1}^L\langle v_i,z\rangle v_i\quad \mbox{and}\quad P_Hz=\sum_{i=L+1}^m\langle v_i,z\rangle v_i.
\end{equation*}
These projection operators will be used below to analyze the preasymptotic behavior of RKM.

\section{Preasymptotic convergence analysis}\label{sec:conv}

In this section, we present a preasymptotic convergence analysis of RKM. Let $x^*$ be one solution of
linear system \eqref{eqn:lin}. Our analysis relies on decomposing the error $e_k=x_k - x^*$ of the
$k$th iterate $x_k$ into low- and high-frequency components (according to the right singular vectors).
We aim at bounding the conditional error $\E[\|e_{k+1}\|^2|e_k]$ (on $e_k$, where the expectation
$\E[\cdot]$ is with respect to the random choice of the index $i$, cf. \eqref{eqn:probability}) by analyzing separately $\E[\|P_Le_{k+1}
\|^2|e_k]$ and $\E[\|P_He_{k+1}\|^2|e_k]$. This is inspired by the fact that the inverse solution
consists mainly of the low-frequency components, which is akin to the concept of the source condition
in regularization theory \cite{EnglHankeNeubauer:1996,ItoJin:2015}. Our error estimates allow explaining
the excellent empirical performance of RKM in the context of inverse problems.

We shall discuss the preasymptotic convergence for exact and noisy data separately.

\subsection{Exact data}\label{sec:noisefree}
First, we analyze the case of noise free data. Let $x^*$ be one solution to the linear system \eqref{eqn:lin}, and
$e_{k}=x_k-x^*$ be the error at iteration $k$. Upon substituting the identity $b=Ax^*$ into RKM iterate, we
deduce that for some $i\in\{1,\ldots,n\}$, there holds
\begin{equation}\label{eqn:err}
  e_{k+1}= \left(I-\frac{a_ia_i^t}{\|a_i\|^2}\right)e_k.
\end{equation}
Note that $I-\frac{a_ia_i^t}{\|a_i\|^2}$ is an orthogonal projection operator. 
We first give two useful lemmas.
\begin{lemma}\label{lem:bdd}
For any $e_L\in\mathcal{L}$ and $e_H\in \mathcal{H}$, there hold
\begin{equation*}
  \sigma_L\|e_L\|\leq \|Ae_L\|\leq \sigma_1\|e_L\|,\quad \|Ae_H\|\leq \sigma_{L+1}\|e_H\|,\quad \mbox{and}\quad \langle Ae_L,Ae_H\rangle =0.
\end{equation*}
\end{lemma}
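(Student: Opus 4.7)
The lemma is essentially an SVD bookkeeping exercise, so the plan is to write everything in the right singular basis and read off the bounds. Specifically, I would begin by expanding arbitrary vectors $e_L\in\mathcal{L}$ and $e_H\in\mathcal{H}$ in the orthonormal basis $\{v_i\}$: write $e_L=\sum_{i=1}^L\alpha_i v_i$ with $\alpha_i=\langle v_i,e_L\rangle$, and $e_H=\sum_{i=L+1}^m\beta_i v_i$ with $\beta_i=\langle v_i,e_H\rangle$. Then $\|e_L\|^2=\sum_{i=1}^L\alpha_i^2$ and $\|e_H\|^2=\sum_{i=L+1}^m\beta_i^2$ by Parseval.

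Next I would compute $A e_L$ and $A e_H$ via $A=U\Sigma V^t$. Since $V^t v_i$ is the $i$th standard basis vector of $\mathbb{R}^m$ and $\Sigma$ acts diagonally with entries $\sigma_i$ for $i\le r=\min(m,n)$ (and annihilates any higher index), one obtains
\begin{equation*}
A e_L = \sum_{i=1}^{L}\alpha_i\sigma_i u_i,\qquad A e_H = \sum_{i=L+1}^{r}\beta_i\sigma_i u_i.
\end{equation*}
The orthonormality of the $u_i$'s then gives $\|A e_L\|^2=\sum_{i=1}^{L}\alpha_i^2\sigma_i^2$ and $\|A e_H\|^2=\sum_{i=L+1}^{r}\beta_i^2\sigma_i^2$.

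The first two inequalities follow immediately by sandwiching the singular values: for $e_L$, use $\sigma_L\le\sigma_i\le\sigma_1$ for $i\le L$, giving $\sigma_L^2\|e_L\|^2\le\|Ae_L\|^2\le\sigma_1^2\|e_L\|^2$; for $e_H$, use $\sigma_i\le\sigma_{L+1}$ for $i\ge L+1$, yielding $\|Ae_H\|^2\le\sigma_{L+1}^2\|e_H\|^2$ (noting any indices beyond $r$ contribute zero and thus only strengthen the bound). The orthogonality $\langle Ae_L,Ae_H\rangle=0$ is immediate from the above representations, since the index sets $\{1,\dots,L\}$ and $\{L+1,\dots,r\}$ are disjoint and the $u_i$'s are orthonormal.

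There is no serious obstacle here; the only minor subtlety is bookkeeping the case $m>n$ (where $v_i$ for $i>n$ lie in $\ker A$), which is handled transparently by the convention that the sum for $Ae_H$ terminates at $r$. This is why I wrote the sums explicitly up to $r$ rather than $m$.
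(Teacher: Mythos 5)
Your proof is correct and is precisely the ``simple algebra'' that the paper omits: expanding $e_L$ and $e_H$ in the right singular basis, applying $A=U\Sigma V^t$, and reading off the bounds from the orthonormality of the $u_i$. The handling of indices beyond $r$ (the kernel directions when $m>n$) is a sensible bit of care that the paper glosses over.
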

\begin{proof}
The assertions follow directly from simple algebra, and hence the proof is omitted.
\end{proof}

\begin{lemma}\label{lem:bound-a}
For $i=1,\ldots,n$, there holds
\begin{equation*}
  \|P_Ha_i\|^2 \leq \sigma_{L+1}^2\quad \mbox{and}\quad \sum_{i=1}^n\|P_Ha_i\|^2\leq \sum_{i=L+1}^r\sigma_i^2.
\end{equation*}
\end{lemma}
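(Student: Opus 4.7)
The plan is to expand $a_i$ in the right singular basis using the SVD $A=U\Sigma V^t$ and then apply $P_H$ coordinatewise. Writing $u_i$ for the (column) vector formed by the $i$th row of $U$, we have $a_i = A^t e_i = V\Sigma^t U^t e_i = V\Sigma^t u_i$, which gives the explicit expansion
\begin{equation*}
  a_i = \sum_{j=1}^{r}\sigma_j (u_i)_j\, v_j.
\end{equation*}
Since the $v_j$'s are orthonormal and $\mathcal{H}=\mathrm{span}\{v_{L+1},\ldots,v_m\}$, applying $P_H$ simply truncates this sum, so
\begin{equation*}
  \|P_H a_i\|^2 = \sum_{j=L+1}^{r}\sigma_j^2 (u_i)_j^2.
\end{equation*}

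For the first bound, I would pull out the largest factor $\sigma_{L+1}^2$ (using the monotonicity $\sigma_j\leq\sigma_{L+1}$ for $j\geq L+1$) to obtain $\|P_H a_i\|^2 \leq \sigma_{L+1}^2 \sum_{j=L+1}^{r}(u_i)_j^2 \leq \sigma_{L+1}^2\|u_i\|^2$. Then I would invoke the fact that $U$ is an $n\times n$ orthonormal (hence orthogonal) matrix so that its rows are unit vectors, i.e., $\|u_i\|=1$, which delivers $\|P_H a_i\|^2\leq \sigma_{L+1}^2$.

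For the second bound, I would sum $\|P_H a_i\|^2$ over $i$ and swap the order of summation to isolate the column sums of $U$:
\begin{equation*}
  \sum_{i=1}^{n}\|P_H a_i\|^2 = \sum_{j=L+1}^{r}\sigma_j^2\sum_{i=1}^{n}(u_i)_j^2.
\end{equation*}
The inner sum is exactly the squared norm of the $j$th column of $U$, which equals $1$ by column orthonormality. This yields $\sum_{i=1}^n \|P_H a_i\|^2 \leq \sum_{j=L+1}^{r}\sigma_j^2$, as claimed.

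There is no real obstacle here beyond bookkeeping: the only subtle point is making sure that $u_i$ is correctly identified (the $i$th row of $U$ viewed as a column vector), and using that $U$ being a square column-orthonormal matrix means its rows are also orthonormal. Everything else is elementary expansion and the monotonicity of the singular values.
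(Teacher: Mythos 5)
Your proposal is correct and follows essentially the same route as the paper: both expand $a_i$ in the right singular basis via the SVD to get $\langle a_i,v_j\rangle=\sigma_j(u_i)_j$, bound the tail sum by $\sigma_{L+1}^2$ using the unit row norms of the orthogonal matrix $U$, and obtain the second estimate by summing over $i$ and using column orthonormality. The only difference is that you spell out the interchange of summation that the paper leaves as ``follows similarly.''
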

\begin{proof}
By definition, $P_H a_i = \sum_{j=L+1}^m\langle a_i,v_j\rangle v_j.$
Since $a_i^t = u_i^t\Sigma V^t$, there holds
$
  \langle a_i,v_j\rangle = u_i^t\Sigma V^tv_j = \langle u_i,\sigma_je_j\rangle = \sigma_j(u_i)_j.
$
Hence,
$\|P_Ha_i\|^2 = \sum_{j=L+1}^m \langle a_i,v_j\rangle^2 = \sum_{j=L+1}^m \sigma_j^2 |(u_i)_j|^2\leq \sigma_{L+1}^2$. The second estimate follows similarly.
\end{proof}

The next result gives a preasymptotic recursive estimate on $\E[\|P_Le_{k+1}\|^2|e_k]$ and $ \E[\|P_He_{k+1}\|^2
|e_k]$ for exact data $b\in \mathrm{range}(A)$. This represents our first main theoretical result.
\begin{theorem}\label{thm:err-exact}
Let $c_1=\frac{\sigma^2_L}{\|A\|_F^2}$ and $c_2=\frac{\sum_{i=L+1}^r\sigma_i^2}{\|A\|_F^2}$. Then there hold
\begin{equation*}
  \begin{aligned}
    \E[\|P_Le_{k+1}\|^2|e_k] &\leq (1-c_1)\|P_Le_k\|^2 + c_2\|P_He_k\|^2,\\
    \E[\|P_He_{k+1}\|^2|e_k] &\leq c_2\|P_Le_k\|^2 + (1+c_2)\|P_He_k\|^2.
  \end{aligned}
\end{equation*}
\end{theorem}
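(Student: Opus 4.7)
The plan is to apply the projectors $P_L$ and $P_H$ directly to the error recursion \eqref{eqn:err}, expand the squared norm, take the conditional expectation using $p_i=\|a_i\|^2/\|A\|_F^2$, and then bound the resulting quadratic residual term \emph{differently} in the two cases. Concretely, for $P\in\{P_L,P_H\}$ I would start from
\begin{equation*}
Pe_{k+1} = Pe_k - \frac{\langle a_i,e_k\rangle}{\|a_i\|^2}Pa_i,
\end{equation*}
expand $\|Pe_{k+1}\|^2$ into the three terms $\|Pe_k\|^2$, a cross term, and a squared term, and use $\langle Pe_k,Pa_i\rangle=\langle Pe_k,a_i\rangle$ together with the key identity $\sum_i \langle a_i,u\rangle\langle a_i,v\rangle = \langle Au,Av\rangle$. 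The probability weight $p_i$ neatly cancels the $\|a_i\|^2$ in the denominator for the cross term, producing
\begin{equation*}
\E[\|Pe_{k+1}\|^2\mid e_k] = \|Pe_k\|^2 - \frac{2}{\|A\|_F^2}\langle Ae_k, APe_k\rangle + \frac{1}{\|A\|_F^2}\sum_i \frac{\|Pa_i\|^2 \langle a_i,e_k\rangle^2}{\|a_i\|^2}.
\end{equation*}
The orthogonality $\langle AP_Le_k,AP_He_k\rangle=0$ from Lemma \ref{lem:bdd} collapses the cross term to $-\frac{2}{\|A\|_F^2}\|APe_k\|^2$ for either choice of $P$.

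For the low-frequency recursion I would bound the remaining quadratic term by simply dropping $\|P_La_i\|^2\le \|a_i\|^2$, which turns it into $\|Ae_k\|^2/\|A\|_F^2 = (\|AP_Le_k\|^2+\|AP_He_k\|^2)/\|A\|_F^2$ by Lemma \ref{lem:bdd}. Combining with the negative cross term gives
\begin{equation*}
\E[\|P_Le_{k+1}\|^2\mid e_k] \le \|P_Le_k\|^2 - \frac{\|AP_Le_k\|^2}{\|A\|_F^2} + \frac{\|AP_He_k\|^2}{\|A\|_F^2},
\end{equation*}
and applying the singular value bounds from Lemma \ref{lem:bdd} ($\sigma_L\|P_Le_k\|\le \|AP_Le_k\|$ and $\|AP_He_k\|\le \sigma_{L+1}\|P_He_k\|$), together with the trivial inequality $\sigma_{L+1}^2\le \sum_{i=L+1}^r \sigma_i^2 = c_2\|A\|_F^2$, yields the first assertion.

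For the high-frequency recursion I would keep Lemma \ref{lem:bound-a} in reserve: use the Cauchy--Schwarz bound $\langle a_i,e_k\rangle^2\le \|a_i\|^2\|e_k\|^2$ in the quadratic term, so that
\begin{equation*}
\frac{1}{\|A\|_F^2}\sum_i \frac{\|P_Ha_i\|^2\langle a_i,e_k\rangle^2}{\|a_i\|^2} \le \frac{\|e_k\|^2}{\|A\|_F^2}\sum_i \|P_Ha_i\|^2 \le c_2\|e_k\|^2,
\end{equation*}
using the second bound in Lemma \ref{lem:bound-a}. Then $\|e_k\|^2=\|P_Le_k\|^2+\|P_He_k\|^2$, and discarding the nonpositive $-2\|AP_He_k\|^2/\|A\|_F^2$ yields the second assertion.

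The only subtle point, and what I expect to be the mildly tricky step, is the asymmetric choice of bounds for the two quadratic residuals: for $P_L$ we must retain the negative $-2\|AP_Le_k\|^2/\|A\|_F^2$ to produce the contraction factor $1-c_1$, so we bound $\|P_La_i\|^2$ by $\|a_i\|^2$ to keep the quadratic piece in the form $\|Ae_k\|^2/\|A\|_F^2$; for $P_H$ the negative linear term is useless (since we only want an upper bound with nonnegative coefficients), so we instead exploit the sum bound $\sum_i\|P_Ha_i\|^2\le \sum_{i>L}\sigma_i^2$ from Lemma \ref{lem:bound-a} via Cauchy--Schwarz. A naive attempt to bound both quadratic terms by $c_2\|e_k\|^2$ would destroy the contraction in $\|P_Le_k\|^2$, while a naive attempt to use $\|Ae_k\|^2$ for $P_H$ would give a coefficient proportional to $\sigma_1^2$, which is not controlled by $c_2$.
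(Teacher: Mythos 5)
Your proposal is correct and follows essentially the same route as the paper: the same expansion of the projected recursion, the same asymmetric treatment of the quadratic term (bounding $\|P_La_i\|^2\le\|a_i\|^2$ to retain the contraction from $-\|Ae_L\|^2/\|A\|_F^2$, versus Cauchy--Schwarz plus $\sum_i\|P_Ha_i\|^2\le\sum_{i>L}\sigma_i^2$ for the high-frequency part), and the same use of Lemmas \ref{lem:bdd} and \ref{lem:bound-a}. The only cosmetic difference is that you collapse the cross term via the orthogonality $\langle Ae_L,Ae_H\rangle=0$ up front, where the paper expands $\langle a_i,e_k\rangle^2$ and observes the cancellation after taking expectations.
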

\begin{proof}
Let $e_L$ and $e_H$ be the low- and high-frequency errors $e_k$, respectively, i.e.,
$e_L=P_Le_k$ and $e_H=P_He_k$. Then by the identities $P_Le_{k+1} =e_L - \frac{1}{\|a_i\|^2}(a_i,e_k)P_La_i$
and $\langle P_La_i,e_L\rangle = \langle a_i,e_L\rangle$, we have
\begin{align*}
  \|P_Le_{k+1}\|^2 & = \|e_L\|^2 - \frac{2}{\|a_i\|^2}\langle P_La_i,e_L\rangle \langle a_i,e_k\rangle + \langle a_i,e_k\rangle^2\frac{\|P_La_i\|^2}{\|a_i\|^4} \\
     & = \|e_L\|^2 - \frac{2}{\|a_i\|^2}\langle a_i,e_L\rangle \langle a_i,e_k\rangle + \langle a_i,e_k\rangle^2\frac{\|P_La_i\|^2}{\|a_i\|^4}\\
    & \leq \|e_L\|^2 -\frac{2}{\|a_i\|^2}\langle a_i,e_L\rangle \langle a_i,e_k\rangle + \frac{\langle a_i,e_k\rangle^2}{\|a_i\|^2}\\
    & = \|e_L\|^2 -\frac{2}{\|a_i\|^2}\langle a_i,e_L\rangle \langle a_i,e_k\rangle + \frac{\langle a_i,e_L\rangle^2+2\langle a_i,e_L\rangle\langle a_i,e_H\rangle+\langle a_i,e_H\rangle^2}{\|a_i\|^2}.
\end{align*}
Upon noting the identity $\sum_{i=1}^n a_ia_i^t = A^t A$,
taking expectation on both sides yields
\begin{align*}
  \E[\|P_{L}e_{k+1}\|^2|e_k] & \leq \|e_L\|^2 - \frac{2}{\|A\|_F^2}\langle e_k,A^tAe_L\rangle + \frac{\|Ae_L\|^2+2\langle e_H,A^tAe_L\rangle+\|Ae_H\|^2}{\|A\|_F^2}.
\end{align*}
Now substituting the splitting $e_k = e_L + e_H$ and rearranging the terms give
\begin{align*}
  \E[\|P_{L}e_{k+1}\|^2|e_k] & \leq \|e_L\|^2 - \frac{2}{\|A\|_F^2}\langle e_L,A^tAe_L\rangle - \frac{2}{\|A\|_F^2}\langle e_H,A^tAe_L\rangle \\
 &\quad
  + \frac{\|Ae_L\|^2+2\langle e_H,A^tAe_L\rangle+\|Ae_H\|^2}{\|A\|_F^2}\\
 &\leq \|e_L\|^2 - \frac{1}{\|A\|_F^2}\|A e_L\|^2 + \frac{\|Ae_H\|^2}{\|A\|_F^2}.
\end{align*}
Thus the first assertion follows from Lemma \ref{lem:bdd}.
The high-frequency component $P_He_{k+1}$ satisfies
$P_He_{k+1} =e_H - \frac{1}{\|a_i\|^2}\langle a_i,e_k\rangle P_Ha_i.$ We appeal to
the inequality $\langle a_i,e_k\rangle^2\leq \|a_i\|^2\|e_k\|^2
=\|a_i\|^2(\|e_L\|^2+\|e_H\|^2)$ to get
\begin{align*}
  \|P_He_{k+1}\|^2 & = \|e_H\|^2 - \frac{2}{\|a_i\|^2}\langle a_i,e_H\rangle \langle a_i,e_k\rangle + \langle a_i,e_k\rangle^2\frac{\|P_Ha_i\|^2}{\|a_i\|^4}\\
     & \leq \|e_H\|^2 -\frac{2}{\|a_i\|^2}\langle a_i,e_H\rangle \langle a_i,e_k\rangle + \frac{\|P_Ha_i\|^2}{\|a_i\|^2}(\|e_L\|^2 +\|e_H\|^2).
\end{align*}
Taking expectation yields
\begin{align*}
  \E[\|P_He_{k+1}\|^2|e_k] & \leq \|e_H\|^2 - \frac{2}{\|A\|_F^2}\|Ae_H\|^2 + \frac{1}{\|A\|_F^2}(\|e_L\|^2+\|e_H\|^2)\sum_{i=1}^n\|P_Ha_i\|^2\\
   & \leq \left(1+\frac{\sum_{i=L+1}^r\sigma_i^2}{\|A\|_F^2}\right)\|e_H\|^2 + \frac{\sum_{i=L+1}^r\sigma_i^2}{\|A\|_F^2}\|e_L\|^2.
\end{align*}
Thus we obtain the second assertion and complete the proof.
\end{proof}

\begin{remark}
By Theorem \ref{thm:err-exact}, the decay of the error
$\E[\|P_Le_{k+1}\|^2|e_k]$ is largely determined by the factor $1-c_1$ and only mildly affected by
$\|P_He_k\|^2$ by a factor $c_2$. The factor $c_2$ is very small in the presence of a gap in the singular value
spectrum at $\sigma_L$, i.e., $\sigma_{L}\gg \sigma_{L+1}$, showing clearly the role of the gap.
\end{remark}

\begin{remark}
Theorem \ref{thm:err-exact} also covers the rank-deficient case, i.e., $\sigma_{L+1}=0$, and it yields
\begin{equation*}
  \begin{aligned}
    \E[\|P_Le_{k+1}\|^2|e_k] \leq (1-c_1)\|P_Le_k\|^2\quad\mbox{and}\quad
    \E[\|P_He_{k+1}\|^2|e_k] \leq \|P_He_k\|^2.
  \end{aligned}
\end{equation*}
If $L=m$, it recovers Theorem \ref{thm:rkm}(i) for exact data. The rank-deficient case was analyzed in
\cite{GowerRichtarik:2015b}.
\end{remark}

\begin{remark}\label{rmk:iter}
By taking expectation of both sides of the estimates in Theorem \ref{thm:err-exact}, we obtain
\begin{eqnarray*}
  \E[\|P_Le_{k+1}\|^2] &\leq (1-c_1)\E[\|P_Le_k\|^2] + c_2\E[\|P_He_k\|^2],\\
    \E[\|P_He_{k+1}\|^2] &\leq c_2\E[\|P_Le_k\|^2] + (1+c_2)\E[\|P_He_k\|^2].
  \end{eqnarray*}
Then the error propagation is given by
\begin{equation*}
 \left[\begin{array}{c}\E[\|P_Le_{k}\|^2]\\ \E[\|P_He_{k}\|^2]\end{array}\right]
   \leq D^k \left[\begin{array}{c}\|P_Le_0\|^2\\ \|P_He_0\|^2\end{array}\right]\qquad \mbox{with }  D =\left[\begin{array}{cc}
   1-c_1 & c_2\\ c_2 & 1+c_2\end{array}\right].
\end{equation*}
The pairs of eigenvalues $\lambda_\pm $ and (orthonormal) eigenfunctions $v_\pm$ of $D$ are given by
\begin{equation*}
    \lambda_\pm =\frac{2-c_1+c_2\pm ((c_1+c_2)^2+4c_2^2)^{1/2}}{2},
\end{equation*}
and
\begin{equation*}
v_\pm = \frac{[((c_1+c_2)^2+4c_2^2)^\frac{1}{2}\mp (c_1+c_2)]^\frac{1}{2}}{\sqrt{2}((c_1+c_2)^2+4c_2^2)^{1/4}}
\left[\begin{array}{c}1\\
    \frac{2c_2}{((c_1+c_2)^2+4c_2^2)^{1/2}\mp(c_1+c_2)}\end{array}\right].
\end{equation*}
For the case $c_2\ll c_1 <1$, i.e., $\alpha = \frac{c_2}{c_1}\ll 1$, we have
\begin{equation*}
  \lambda_+ = 1+ c_1 (\alpha  + O(\alpha^2))  \quad \mbox{and}\quad \lambda_- = 1- c_1 (1 + O(\alpha^2))
\end{equation*}
and
\begin{equation*}
  v_+  \approx \frac{1}{(1+\alpha^2)^\frac{1}{2}}\left[\begin{array}{c}-\alpha\\
     1\end{array}\right] \quad\mbox{and}\quad v_- \approx \frac{1}{(1+\alpha^2)^\frac{1}{2}}\left[\begin{array}{c}1\\
    \alpha\end{array}\right].
\end{equation*}
With $V=[v_+ \ v_-]$, we have the approximate eigendecomposition if $k = O(1)$:
\begin{equation*}
   D^k \approx V \left[\begin{array}{cc} 1 + k \alpha c_1 &\\ & (1-c_1)^k \end{array}\right]V^t.
\end{equation*}
Thus, for  $c_1\gg c_2$, we have the following approximate error propagation for $k = O(1)$:
\begin{equation*}
  \begin{aligned}
    \E[\|P_Le_k\|^2] & \approx (1-c_1)^k\|P_Le_{0}\|^2 + \alpha (1 - (1 - c_1)^k)\|P_He_{0}\|^2,\\
    \E[\|P_He_k\|^2] & \approx \alpha (1 - (1 - c_1)^k) \|P_Le_0\|^2 + (1 + k\alpha c_1)\|P_He_0\|^2.
  \end{aligned}
\end{equation*}
\end{remark}

\subsection{Noisy data}
Next we turn to the case of noisy data $b^\delta$, cf. \eqref{eqn:noisy-data}, we use the superscript $\delta$
to indicate the noisy case. Since $b_i^\delta=b_i+\eta_i$, the RKM iteration reads
\begin{equation*}
  x_{k+1}-x^* = x_k-x^* + \frac{\langle a_i,x^*-x_k\rangle}{\|a_i\|^2}a_i+\frac{\eta_i a_i}{\|a_i\|^2},
\end{equation*}
and thus the random error $e_{k+1} = x_{k+1} - x^*$ satisfies
\begin{equation}\label{eqn:recurs-noise}
  e_{k+1}=\left(I-\frac{a_ia_i^t}{\|a_i\|^2}\right)e_k + \frac{\eta_ia_i}{\|a_i\|^2}.
\end{equation}

Now we give our second main result, i.e., bounds on the errors $\E[\|P_Le_{k+1}\|^2|e_k]$ and $\E[\|P_He_{k+1}\|^2|e_k]$.
\begin{theorem}\label{thm:err-noise}
Let $c_1=\frac{\sigma_L^2}{\|A\|_F^2}$ and $c_2=\frac{\sum_{i=L+1}^r\sigma_i^2}{\|A\|_F^2}$. Then there hold
\begin{align*}
    \E[\|P_Le_{k+1}\|^2|e_k] & \leq (1-c_1)\|P_Le_k\|^2+ c_2\|P_He_k\|^2 + \tfrac{\delta^2}{\|A\|_F^2} + \tfrac{2}{\|A\|_F}\delta\sqrt{c_2}\|e_k\|,\\
    \E[\|P_He_{k+1}\|^2|e_k] &\leq c_2 \|P_Le_k\|^2 + (1+c_2)\|P_He_k\|^2 + \tfrac{\delta^2}{\|A\|_F^2} + \tfrac{2}{\|A\|_F}\delta\sqrt{c_2}\|e_k\|.
\end{align*}
\end{theorem}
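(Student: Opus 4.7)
The plan is to proceed as in the noise-free proof of Theorem~\ref{thm:err-exact}, but carry along the additional noise contribution $\eta_i a_i/\|a_i\|^2$ from the recurrence~\eqref{eqn:recurs-noise}. Writing $e_L = P_Le_k$, $e_H = P_He_k$, and $\beta_i = (\eta_i - \langle a_i,e_k\rangle)/\|a_i\|^2$, I would expand $\|P_Le_{k+1}\|^2 = \|e_L + \beta_i P_La_i\|^2$ and $\|P_He_{k+1}\|^2 = \|e_H + \beta_i P_Ha_i\|^2$, take the conditional expectation in $i$ using $\sum_{i=1}^n p_i f_i/\|a_i\|^2 = \sum_{i=1}^n f_i/\|A\|_F^2$ together with $\sum_i a_ia_i^t = A^tA$, and then convert the resulting spectral quantities into $c_1$, $c_2$, and $\delta$ using Lemmas~\ref{lem:bdd} and~\ref{lem:bound-a}.

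The bound on $\E[\|P_Le_{k+1}\|^2|e_k]$ is the easier of the two. Using the crude estimate $\|P_La_i\|^2 \leq \|a_i\|^2$ replaces the quadratic piece $\beta_i^2\|P_La_i\|^2$ by $(\eta_i - \langle a_i,e_k\rangle)^2/\|a_i\|^2$, whose conditional expectation is $\|\eta - Ae_k\|^2/\|A\|_F^2$. The linear-in-$\beta_i$ term contributes $2(\langle\eta,Ae_L\rangle - \|Ae_L\|^2)/\|A\|_F^2$ in expectation, and after writing $e_k = e_L + e_H$ the two $\langle\eta,Ae_L\rangle$ pieces cancel (using $\langle Ae_L,Ae_H\rangle = 0$ from Lemma~\ref{lem:bdd}), leaving the compact identity
\begin{equation*}
\E[\|P_Le_{k+1}\|^2|e_k] \leq \|e_L\|^2 - \tfrac{\|Ae_L\|^2}{\|A\|_F^2} + \tfrac{\|Ae_H - \eta\|^2}{\|A\|_F^2}.
\end{equation*}
Expanding $\|Ae_H - \eta\|^2$, invoking Lemma~\ref{lem:bdd} together with $\sigma_{L+1}^2 \leq \sum_{j=L+1}^r\sigma_j^2 = c_2\|A\|_F^2$ (so that $\|Ae_H\| \leq \sqrt{c_2}\|A\|_F\|e_H\|$), and using $\|\eta\|\leq\delta$ and $\|e_H\|\leq\|e_k\|$ in the resulting cross term yields the first assertion.

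For $\E[\|P_He_{k+1}\|^2|e_k]$ the same recipe is not available, because the factor $\|P_Ha_i\|^2/\|a_i\|^2$ must be retained in the quadratic in order to activate the tight bound of Lemma~\ref{lem:bound-a}. I therefore keep $\beta_i^2\|P_Ha_i\|^2$ and introduce the weighted residual $\mu_i = (\eta_i - \langle a_i,e_k\rangle)\|P_Ha_i\|/\|a_i\|$, so that $\mu = D(\eta - Ae_k)$ with $D_{ii} = \|P_Ha_i\|/\|a_i\|$ and $\E[\beta_i^2\|P_Ha_i\|^2|e_k] = \|\mu\|^2/\|A\|_F^2$. Minkowski's inequality gives $\|\mu\|\leq\|D\eta\| + \|DAe_k\|$; the bounds $\|D\eta\|\leq\|\eta\|\leq\delta$ (from $D_{ii}\leq 1$) and $\|DAe_k\|^2 \leq \|e_k\|^2\sum_i\|P_Ha_i\|^2 \leq c_2\|A\|_F^2\|e_k\|^2$ (from Lemma~\ref{lem:bound-a}) together produce $\|\mu\|^2/\|A\|_F^2 \leq (\delta/\|A\|_F + \sqrt{c_2}\|e_k\|)^2$, which expands to exactly the three desired noise terms. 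The leftover linear-in-$\beta_i$ contribution $2(\langle\eta,Ae_H\rangle - \|Ae_H\|^2)/\|A\|_F^2$ is handled by discarding the nonpositive $-2\|Ae_H\|^2/\|A\|_F^2$ and bounding $|\langle\eta,Ae_H\rangle|\leq\sqrt{c_2}\|A\|_F\|e_H\|\delta \leq \sqrt{c_2}\|A\|_F\|e_k\|\delta$.

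The main subtlety throughout is extracting the correct $\sqrt{c_2}$ scaling in the mixed $\delta\|e_k\|$ term. In the $P_L$ case it emerges automatically from the cancellation producing $\|Ae_H - \eta\|^2$; in the $P_H$ case it requires the weighted vector $\mu$ so that Minkowski's inequality, combined with Lemma~\ref{lem:bound-a}, separates the noise and iterate contributions before the $c_2$ bound is applied. Once these two devices are in place, the remainder is bookkeeping that closely parallels the proof of Theorem~\ref{thm:err-exact}.
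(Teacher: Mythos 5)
Your argument for the low-frequency bound is correct and in fact takes a cleaner route than the paper: by grouping $\eta_i-\langle a_i,e_k\rangle$ into the single coefficient $\beta_i$ and using the exact cancellation of the $\langle\eta,Ae_L\rangle$ contributions, you arrive at $\E[\|P_Le_{k+1}\|^2|e_k]\leq\|e_L\|^2-\|Ae_L\|^2/\|A\|_F^2+\|Ae_H-\eta\|^2/\|A\|_F^2$, from which the first assertion follows exactly as you describe. The paper instead isolates the noise term and controls the resulting cross term by an algebraic identity; your version of the $P_L$ estimate needs no such device.

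The high-frequency bound, however, has a quantitative gap: as outlined it proves the weaker inequality with $\tfrac{4}{\|A\|_F}\delta\sqrt{c_2}\|e_k\|$ in place of $\tfrac{2}{\|A\|_F}\delta\sqrt{c_2}\|e_k\|$, because the mixed $\delta\|e_k\|$ term is counted twice. The quadratic piece $\E[\beta_i^2\|P_Ha_i\|^2|e_k]=\|\mu\|^2/\|A\|_F^2$ with $\|\mu\|\leq\|D\eta\|+\|DAe_k\|$ already yields $(\delta/\|A\|_F+\sqrt{c_2}\|e_k\|)^2$, whose cross term is one full copy of $\tfrac{2}{\|A\|_F}\delta\sqrt{c_2}\|e_k\|$ (and whose $c_2\|e_k\|^2$ piece is needed in its entirety to produce $c_2\|e_L\|^2+c_2\|e_H\|^2$). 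The leftover linear-in-$\beta_i$ contribution $2(\langle\eta,Ae_H\rangle-\|Ae_H\|^2)/\|A\|_F^2$ then produces a second copy once you bound $|\langle\eta,Ae_H\rangle|\leq\sqrt{c_2}\|A\|_F\delta\|e_k\|$, and the discarded $-2\|Ae_H\|^2/\|A\|_F^2$ cannot absorb it (optimizing $-2t^2+2\delta t$ over $t$ only buys back $\delta^2/2$). The paper avoids the double count by keeping the noise term $\eta_iP_Ha_i/\|a_i\|^2$ separate from the noise-free update and exploiting the exact cancellation
\begin{equation*}
\langle P_Ha_i,e_H\rangle-\frac{\|P_Ha_i\|^2}{\|a_i\|^2}\langle a_i,e_k\rangle=\frac{\|P_La_i\|^2\langle P_Ha_i,e_H\rangle-\|P_Ha_i\|^2\langle P_La_i,e_L\rangle}{\|a_i\|^2},
\end{equation*}
whose modulus is at most $\|P_Ha_i\|\,\|e_k\|$; the whole linear-in-noise contribution is then a single $\tfrac{2}{\|A\|_F}\delta\sqrt{c_2}\|e_k\|$ by Cauchy--Schwarz and Lemma \ref{lem:bound-a}. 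Without this cancellation (or an equivalent device) your outline establishes only the weaker constant; this is harmless for the qualitative preasymptotic message, but it does not prove the inequality as stated.
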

\begin{proof}
By the recursive relation \eqref{eqn:recurs-noise}, we have the splitting
\begin{equation*}
  \E[\|P_Le_{k+1}\|^2|e_k] = {\rm I}_1 + {\rm I}_2 + {\rm I}_3,
\end{equation*}
where the terms are given by (with $e_L=P_Le_k$ and $e_H=P_He_k$)
\begin{align*}
  {\rm I}_1 &= \sum_{i=1}^n \frac{\|a_i\|^2}{\|A\|_F^2}\|e_L-\frac{\langle a_i,e_k\rangle}{\|a_i\|^2}P_La_i\|^2,\quad
  {\rm I}_2 = \sum_{i=1}^n\frac{\|a_i\|^2}{\|A\|_F^2}\frac{\eta_i^2\|P_La_i\|^2}{\|a_i\|^4},\\
  {\rm I}_3 & = \sum_{i=1}^n\frac{\|a_i\|^2}{\|A\|_F^2}\left[\frac{2\eta_i}{\|a_i\|^2}\langle P_La_i,e_L\rangle - \frac{2\eta_i}{\|a_i\|^4}\langle P_La_i,P_La_i\rangle\langle a_i,e_k\rangle\right].
\end{align*}
The first term ${\rm I}_1$ can be bounded directly by Theorem \ref{thm:err-exact}. Clearly,
${\rm I}_2\leq \frac{\delta^2}{\|A\|_F^2}$. For the third term ${\rm I}_3$, we note the splitting
\begin{align*}
  &\langle P_La_i,e_L\rangle - \frac{\|P_La_i\|^2}{\|a_i\|^2}\langle a_i,e_k\rangle \\
= & \frac{\|P_La_i\|^2+\|P_Ha_i\|^2}{\|a_i\|^2}\langle P_La_i,e_L\rangle - \frac{\|P_La_i\|^2}{\|a_i\|^2}(\langle P_La_i,e_L\rangle+\langle P_Ha_i,e_H\rangle)\\
=& \frac{\|P_Ha_i\|^2\langle P_La_i,e_L\rangle-\|P_La_i\|^2\langle P_Ha_i,e_H\rangle}{\|a_i\|^2}:={\rm I}_{3,i}.
\end{align*}
By the Cauchy-Schwarz inequality, we have
\begin{equation*}
  |{\rm I}_3|\leq \frac{2}{\|A\|_F^2}\|\eta\|\Big(\sum_{i=1}^n{\rm I}_{3,i}^2\Big)^{1/2}.
\end{equation*}
Direct computation yields
\begin{align*}
  {\rm I}_{3,i}^2 \leq &\frac{\|P_Ha_i\|^2\|P_La_i\|^2}{\|a_i\|^2}\cdot\frac{\|P_Ha_i\|^2\|e_L\|^2 + 2\|P_Ha_i\|\|P_La_i\|\|e_L\|\|e_H\|+ \|P_La_i\|^2\|e_H\|^2}{\|P_La_i\|^2+\|P_Ha_i\|^2}\\
  & \leq \|P_Ha_i\|^2 \frac{(\|P_La_i\|^2 + \|P_Ha_i\|^2)(\|e_L\|^2+\|e_H\|^2)}{\|P_La_i\|^2+\|P_Ha_i\|^2}=\|P_Ha_i\|^2\|e_k\|^2.
\end{align*}
Consequently, by Lemma \ref{lem:bound-a}, we obtain
\begin{equation*}
  |{\rm I}_3|\leq \frac{2}{\|A\|_F^2}\delta\Big(\sum_{i=L+1}^r\sigma_i^2\Big)^{1/2}\|e_k\|.
\end{equation*}
These estimates together show the first assertion. For the high-frequency component $P_He_{k+1}$, we have
\begin{equation*}
  \E[\|P_He_{k+1}\|^2|e_k] = {\rm I}_{4} + {\rm I}_5 + {\rm I}_6,
\end{equation*}
where the terms are given by
\begin{align*}
  {\rm I}_4 &= \sum_{i=1}^n \frac{\|a_i\|^2}{\|A\|_F^2}\|e_H-\frac{\langle a_i,e_k\rangle}{\|a_i\|^2}P_Ha_i\|^2,\quad
  {\rm I}_5 = \sum_{i=1}^n\frac{\|a_i\|^2}{\|A\|_F^2}\frac{\eta_i^2\|P_Ha_i\|^2}{\|a_i\|^4},\\
  {\rm I}_6 & = \sum_{i=1}^n\frac{\|a_i\|^2}{\|A\|_F^2}\left[\frac{2\eta_i}{\|a_i\|^2}\langle P_Ha_i,e_H\rangle - \frac{2\eta_i}{\|a_i\|^4}\langle P_Ha_i,P_Ha_i\rangle\langle a_i,e_k\rangle\right].
\end{align*}
The term ${\rm I}_4$ can be bounded by Theorem \ref{thm:err-exact}. Clearly,
${\rm I}_5 \leq \frac{\delta^2}{\|A\|_F^2}$. For the term ${\rm I}_6$, note the splitting
\begin{align*}
    \langle P_Ha_i,e_H\rangle - \frac{\|P_Ha_i\|^2}{\|a_i\|^2}\langle a_i,e_k\rangle
  =  \frac{1}{\|a_i\|^2}(\|P_La_i\|^2\langle P_Ha_i,e_H\rangle - \|P_Ha_i\|^2\langle P_La_i,e_L\rangle),
\end{align*}
and thus ${\rm I}_6 = -{\rm I}_3$. This shows the second assertion, and completes the proof of the theorem.
\end{proof}

\begin{remark}
Recall the following estimate for RKM \cite[Theorem 3.7]{ZouziasFreris:2013}
\begin{equation*}
  \E[\|e_{k+1}\|^2|e_k] \leq \left(1-\kappa_A^{-2}\right)\|e_k\| + \|A\|_F^{-2}\delta^2.
\end{equation*}
In comparison, the estimate in Theorem \ref{thm:err-noise} is free from $\kappa_A$, but introduces an
additional term $\tfrac{2}{\|A\|_F}\delta\sqrt{c_2}\|e_k\|$. Since $c_2$
is generally very small, this extra term is comparable with $\|A\|_F^{-2}\delta^2$.
Theorem \ref{thm:err-noise} extends Theorem \ref{thm:err-exact} to the noisy case: if $\delta=0$, it recovers
Theorem \ref{thm:err-exact}. It indicates that if the initial error $e_0=x_0-x^*$ concentrates mostly
on low frequency, the iterate will first decrease the error. The smoothness assumption on the initial
error $e_0$ is realistic for inverse problems, notably under the standard source type conditions (for deriving
convergence rates) \cite{EnglHankeNeubauer:1996,ItoJin:2015}. Nonetheless, the deleterious noise influence
will eventually kick in as the iteration proceeds.
\end{remark}

\begin{remark}
One can discuss the evolution of the iterates for noisy data, similar to Remark \ref{rmk:iter}.
By Young's inequality $2ab\leq \epsilon a^2+\epsilon^{-1}b^2$, the error satisfies {\rm(}with $\bar c_1 = c_1 - \epsilon c_2$ and $\bar c_2 = (1+\epsilon)c_2${\rm)}
\begin{align*}
    \E[\|P_Le_{k+1}\|^2|e_k] & \leq (1-\bar c_1)\|P_Le_k\|^2+ \bar c_2\|P_He_k\|^2 + \tfrac{(1+\epsilon^{-1})\delta^2}{\|A\|_F^2},\\
    \E[\|P_He_{k+1}\|^2|e_k] &\leq \bar c_2 \|P_Le_k\|^2 + (1+\bar c_2)\|P_He_k\|^2 + \tfrac{(1+\epsilon^{-1})\delta^2}{\|A\|_F^2}.
\end{align*}
Then it follows that
\begin{align*}
 \left[\begin{array}{c}\E[\|P_Le_{k}\|^2]\\ \E[\|P_He_{k}\|^2]\end{array}\right]
   & \leq D^k \left[\begin{array}{c}\|P_Le_0\|^2\\ \|P_He_0\|^2\end{array}\right] + \tfrac{(1+\epsilon^{-1})\delta^2}{\|A\|_F^2}(I-D)^{-1}(I-D^{k})\left[\begin{array}{c}
   1\\ 1\end{array}\right],\quad D =\left[\begin{array}{cc}
   1-\bar c_1 & \bar c_2\\ \bar c_2 & 1+\bar c_2\end{array}\right].
\end{align*}
In the case $\bar c_2\ll\bar c_1<1$ and $\alpha = \frac{\bar c_2}{\bar c_1}\ll 1$ {\rm(}by choosing sufficiently
small $\epsilon${\rm)}, for $k=O(1)$, repeating the analysis in Remark \ref{rmk:iter} yields
\begin{equation*}
  \begin{aligned}
    \E[\|P_Le_k\|^2] & \approx (1-\bar c_1)^k\|P_Le_{0}\|^2 + \alpha (1 - (1 - \bar c_1)^k)\|P_He_{0}\|^2 + k\tfrac{(1+\epsilon^{-1})\delta^2}{\|A\|_F^2},\\
    \E[\|P_He_k\|^2] & \approx \alpha (1 - (1 - \bar c_1)^k) \|P_Le_0\|^2 + (1 + k\alpha \bar c_1)\|P_He_0\|^2 + k\tfrac{(1+\epsilon^{-1})\delta^2}{\|A\|_F^2}.
  \end{aligned}
\end{equation*}
Thus, the presence of data noise only influences the error of the RKM iterates mildly by an additive factor $(k\delta^2)$, during the initial iterations.
\end{remark}

\section{RKM with variance reduction}\label{sec:implement}

When equipped with a proper stopping criterion, Kaczmarz method is a regularization method
\cite{KowarScherzer:2002,KaltenbacherNeubauerScherzer:2008}. Naturally, one would expect that this
assertion holds also for RKM \eqref{eqn:rkm}--\eqref{eqn:probability}. This however remains to be
proven due to the lack of a proper stopping criterion.  To see the delicacy, consider one natural
choice, i.e., Morozov's discrepancy principle \cite{Morozov:1966}:  choose the smallest integer $k$ such that
\begin{equation}\label{eqn:dp}
   \|Ax_k-b^\delta\| \leq \tau\delta,
\end{equation}
where $\tau>1$ is fixed \cite{EnglHankeNeubauer:1996,ItoJin:2015}. Theoretically, it is still unclear
that \eqref{eqn:dp} can be satisfied within a finite number of iterations for every noise level
$\delta>0$. In practice, computing the residual $\|Ax_k-b^\delta\|$ at each iteration is undesirable
since its cost is of the order of evaluating the full gradient, whereas avoiding the latter is the
very motivation for RKM! Below we propose one simple remedy by drawing on its connection with stochastic
gradient methods \cite{RobbinsMonro:1951} and the vast related developments.

First we note that the solution to \eqref{eqn:lin} is equivalent to minimizing the least-squares problem
\begin{equation}\label{eqn:ls}
  \min_{x\in\mathbb{R}^n}\Big\{f(x):=\frac{1}{2n}\sum_{i=1}^n|\langle a_i,x\rangle-b_i|^2\Big\}.
\end{equation}
Next we recast RKM as a stochastic gradient method for problem \eqref{eqn:ls}, as noted
earlier in \cite{NeedellSrebroWard:2016}. We include a short proof for completeness.

\begin{proposition}
The RKM iteration \eqref{eqn:rkm}-\eqref{eqn:probability} is a (weighted) stochastic gradient update
with a constant stepsize $n/\|A\|_F^2$.
\end{proposition}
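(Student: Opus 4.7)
The plan is to recognize the RKM update as an importance-weighted stochastic gradient step and back out the implicit stepsize. First, I split the objective in \eqref{eqn:ls} into summands $f_i(x)=\tfrac{1}{2}|\langle a_i,x\rangle-b_i|^2$, so that $f(x)=\tfrac{1}{n}\sum_{i=1}^n f_i(x)$, with componentwise gradients $\nabla f_i(x)=(\langle a_i,x\rangle-b_i)\,a_i$. Rewriting \eqref{eqn:rkm} gives
\begin{equation*}
x_{k+1}=x_k-\frac{1}{\|a_i\|^2}\nabla f_i(x_k),
\end{equation*}
which shows the update direction is a single-sample gradient, but scaled by the $i$-dependent factor $1/\|a_i\|^2$ rather than by a constant stepsize.

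Second, to absorb this $i$-dependence into the sampling law instead of the stepsize, I use importance weighting: with $p_i=\|a_i\|^2/\|A\|_F^2$ define
\begin{equation*}
\hat g(x;i)=\frac{1}{n p_i}\nabla f_i(x)=\frac{\|A\|_F^2}{n\|a_i\|^2}(\langle a_i,x\rangle-b_i)\,a_i.
\end{equation*}
A one-line check confirms unbiasedness, $\E_i[\hat g(x;i)]=\sum_{i=1}^n p_i\cdot\tfrac{1}{n p_i}\nabla f_i(x)=\nabla f(x)$, so $\hat g$ is a legitimate (weighted) stochastic gradient estimator of $\nabla f$ under the probability law \eqref{eqn:probability}.

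Finally, I match the two formulas: the weighted SGD update $x_{k+1}=x_k-\eta\,\hat g(x_k;i)$ equals the RKM update $x_k-\tfrac{1}{\|a_i\|^2}(\langle a_i,x_k\rangle-b_i)a_i$ iff $\eta\|A\|_F^2/n=1$, i.e.\ $\eta=n/\|A\|_F^2$, which is the claimed constant stepsize. There is no genuine obstacle; the only slightly subtle point is being explicit about the importance-weighting (without it, the stepsize would not be constant), and this is precisely what the qualifier ``weighted'' in the proposition statement is doing.
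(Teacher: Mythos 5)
Your proposal is correct and follows essentially the same route as the paper: the paper absorbs the factor $\|A\|_F^2/(n w_i)$ with $w_i=\|a_i\|^2$ into the definition of the summands $f_i$ so that sampling $i\sim p_i$ and taking $\nabla f_i$ yields exactly your importance-weighted estimator $\hat g(x;i)$, and the stepsize matching $\eta=n/\|A\|_F^2$ is identical. The only cosmetic difference is that you phrase the reweighting as importance sampling on unnormalized gradients while the paper phrases it as a reweighted objective; your explicit unbiasedness check is a welcome addition that the paper defers to its subsequent proposition on the mean and covariance of $g_i$.
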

\begin{proof}
With the weight $w_i=\|a_i\|^2$, we rewrite problem \eqref{eqn:ls} into
\begin{equation*}
  \begin{aligned}
    \frac{1}{2n}\sum_{i=1}^n(\langle a_i,x\rangle-b_i)^2&=\frac{1}{2n}\sum_{i=1}^n\frac{w_i}{\|A\|_F^2} \frac{\|A\|_F^2}{w_i}(\langle a_i,x\rangle-b_i)^2,\\
     & = \sum_{i=1}^n\frac{w_i}{\|A\|_F^2} f_i, \quad  \mbox{with}\quad f_i(x) = \frac{\|A\|_F^2}{2nw_i}(\langle a_i,x\rangle-b_i)^2.
  \end{aligned}
\end{equation*}
Since $\sum_{i=1}^n w_i=\|A\|_F^2$, we may interpret $p_i=w_i/\|A\|_F^2$ as a probability distribution
on the set $\{1,\ldots,n\}$, i.e. \eqref{eqn:probability}. Next we apply the stochastic gradient
method. Since $g_i(x):=\nabla f_i(x)
=\frac{\|A\|_F^2}{nw_i}(\langle a_i,x\rangle-b_i)a_i$, with a fixed step length $\eta=n\|A\|_F^{-2}$, we get
\begin{equation*}
  x_{k+1} = x_k - w_i^{-1}(\langle a_i,x\rangle-b)a_i,
\end{equation*}
where $i\in \{1,\ldots,n\}$ is drawn i.i.d. according to  \eqref{eqn:probability}. Clearly, it is
equivalent to RKM \eqref{eqn:rkm}-\eqref{eqn:probability}.
\end{proof}

Now we give the mean and variance of the stochastic gradient $g_i(x)$.
\begin{proposition}\label{prop:var}
Let $g(x)=\nabla f(x)$. Then the gradient $g_i(x)$ satisfies
\begin{equation*}
  \begin{aligned}
  \mathrm{\E}[g_i(x)] & = g(x),\\
  \mathrm{Cov}[g_i(x)] & = \frac{\|A\|_F^2}{n^2}\sum_{i=1}^n(\langle a_i,x\rangle-b_i)^2\frac{a_ia_i^t}{\|a_i\|^2}-\frac{1}{n^2}A^t(Ax-b)(Ax-b)^tA.
  \end{aligned}
\end{equation*}
\end{proposition}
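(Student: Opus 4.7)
The plan is to verify both identities by direct computation using the probability weights $p_i = w_i/\|A\|_F^2$ from \eqref{eqn:probability} and the explicit form $g_i(x) = \frac{\|A\|_F^2}{n w_i}(\langle a_i,x\rangle - b_i)\, a_i$ derived in the previous proposition. Everything reduces to summing weighted rank-one terms, and the key observation is that each factor $p_i$ cancels one power of $w_i$ in the denominator of $g_i(x)$.

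First I would compute the mean. Writing
\begin{equation*}
\E[g_i(x)] = \sum_{i=1}^n p_i\, g_i(x) = \sum_{i=1}^n \frac{w_i}{\|A\|_F^2} \cdot \frac{\|A\|_F^2}{n w_i}(\langle a_i,x\rangle - b_i)\, a_i = \frac{1}{n}\sum_{i=1}^n (\langle a_i,x\rangle - b_i)\, a_i,
\end{equation*}
and then recognizing $\sum_i (\langle a_i,x\rangle - b_i) a_i = A^t(Ax - b)$, I obtain $\E[g_i(x)] = \tfrac{1}{n} A^t(Ax - b) = \nabla f(x) = g(x)$, which gives the first claim (and also justifies the interpretation of RKM as an \emph{unbiased} stochastic gradient update).

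For the covariance I would use the decomposition $\mathrm{Cov}[g_i(x)] = \E[g_i(x) g_i(x)^t] - \E[g_i(x)]\,\E[g_i(x)]^t$. The second moment is
\begin{equation*}
\E[g_i(x) g_i(x)^t] = \sum_{i=1}^n \frac{w_i}{\|A\|_F^2}\cdot \frac{\|A\|_F^4}{n^2 w_i^2}(\langle a_i,x\rangle - b_i)^2\, a_i a_i^t = \frac{\|A\|_F^2}{n^2}\sum_{i=1}^n (\langle a_i,x\rangle - b_i)^2 \frac{a_i a_i^t}{\|a_i\|^2},
\end{equation*}
where I used $w_i = \|a_i\|^2$. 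Subtracting $\E[g_i(x)]\E[g_i(x)]^t = g(x) g(x)^t = \tfrac{1}{n^2} A^t(Ax-b)(Ax-b)^t A$ from the previous display yields exactly the stated expression for $\mathrm{Cov}[g_i(x)]$.

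There is no real obstacle here: the computation is bookkeeping, with the only point worth flagging being the cancellation of the awkward factor $\|A\|_F^2/(n w_i)$ in $g_i$ against the sampling weight $p_i = w_i/\|A\|_F^2$, so that the mean is rescaling-free and the second moment retains a single $1/\|a_i\|^2$ on the outer product $a_i a_i^t$. One could alternatively rewrite $g_i(x) = p_i^{-1}\cdot \tfrac{1}{n}(\langle a_i,x\rangle - b_i)a_i$ to make the importance-sampling structure transparent; both routes lead to the same result.
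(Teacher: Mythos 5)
Your computation is correct and coincides with the paper's own proof: both compute the mean by summing $p_i g_i(x)$ with the cancellation $p_i \cdot \frac{\|A\|_F^2}{n w_i} = \frac{1}{n}$, and both obtain the covariance from the bias--variance decomposition $\E[g_i g_i^t] - \E[g_i]\E[g_i]^t$ with the same second-moment calculation. Nothing further is needed.
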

\begin{proof}
The full gradient $g(x):=\nabla f(x)$ at $x$ is given by $g(x)= \tfrac{1}{n}A^t(Ax-b)$.
The mean $\E[g_i(x)]$ of the (partial) gradient $g_i(x)$ is given by
\begin{equation*}
  \E[g_i(x)]=\frac{1}{n}\sum_{i=1}^n\frac{\|a_i\|^2}{\|A\|^2_F}\frac{\|A\|_F^2}{\|a_i\|^2}(\langle a_i,x\rangle-b_i)a_i=\tfrac{1}{n}A^t(Ax-b).
\end{equation*}
Next, by bias-variance decomposition, the covariance $\mathrm{Cov}[g_i(x)]$ of the gradient $g_i(x)$
is given by
\begin{equation*}
  \begin{aligned}
    \mathrm{Cov}[g_i(x)]&=\E[g_i(x)g_i(x)^t]-\E[g_i(x)]\E[g_i(x)]^t\\
      & = \frac{\|A\|_F^4}{n^2}\sum_{i=1}^n\frac{\|a_i\|^2}{\|A\|^2_F}\frac{1}{\|a_i\|^4}(\langle a_i,x\rangle-b_i)^2a_ia_i^t-\frac{1}{n^2}A^t(Ax-b)(Ax-b)^tA\\
      & =  \frac{\|A\|_F^2}{n^2}\sum_{i=1}^n(\langle a_i,x\rangle-b_i)^2\frac{a_ia_i^t}{\|a_i\|^2}-\frac{1}{n^2}A^t(Ax-b)(Ax-b)^tA.
  \end{aligned}
\end{equation*}
This completes the proof of the proposition.
\end{proof}

Thus, the single gradient $g_i(x)$ is an unbiased estimate of the full gradient $g(x)$. For consistent
linear systems, the covariance $\mathrm{Cov}[g_i(x)]$ is asymptotically vanishing: as $x_k\to x^*$,
both terms in the variance expression tend to zero. However, for inconsistent linear systems, the
covariance $\mathrm{Cov}[g_i(x)]$ generally does not vanish at the optimal solution $x^*$:
\begin{equation*}
    \mathrm{Cov}[g_i(x^*)] \approx \frac{\|A\|_F^2}{n^2}\sum_{i=1}^n(\langle a_i,x^*\rangle-b_i^\delta)^2\frac{a_ia_i^t}{\|a_i\|^2},
\end{equation*}
since one might expect $A^t(Ax^*-b^\delta)\approx 0$. Further, $\mathrm{Cov}[g_i(x^*)]$ is of the order
$\delta^2$ in the neighborhood of $x^*$. One may predict the (asymptotic) dynamics of RKM via a stochastic
modified equation from the covariance \cite{LiTai:2015}. The RKM iteration eventually deteriorates due to the
nonvanishing covariance so that its asymptotic convergence slows down.

These discussions motivate the use of variance reduction techniques developed for stochastic gradient methods
to reduce the variance of the gradient estimate. There are several possible strategies, e.g., stepsize reduction,
stochastic variance reduction gradient (SVRG), averaging and mini-batch (see e.g., \cite{SchmidtBach:2017,
JohnsonZhang:2013}). We only adapt  SVRG \cite{JohnsonZhang:2013} to RKM, termed as RKM with variance reduction
(RKMVR), cf. Algorithm \ref{alg:rkm-vr} for details. It hybridizes the stochastic gradient with the (occasional)
full gradient to achieve variance reduction. Here, $s$ is the length of epoch, which determines the frequency of
full gradient evaluation and was suggested to be $n$ \cite{JohnsonZhang:2013}, and $K$ is the maximum number of
iterations. In view of Step 2, within the first epoch, it performs only the standard RKM, and at the end of the epoch,
it evaluates the full gradient. In RKMVR, the residual $\|Ax_k-b^\delta\|$ is a direct by-product of full gradient
evaluation and occurs only at the end of each epoch, and thus it does not invoke additional computational effort.

The update at Step 8 of Algorithm \ref{alg:rkm-vr} can be rewritten as (for $k\geq s$)
\begin{equation*}
   x_{k+1}  = x_k + \frac{\langle a_i,\tilde x-x_k\rangle a_i}{\|a_i\|^2}-\frac{n}{\|A\|_F^2}\tilde g,
\end{equation*}
and thus $\tilde x-x_k\to 0$ as the iteration proceeds, and it recovers the Landweber method. With this choice,
the variance of the gradient estimate is asymptotically vanishing \cite{JohnsonZhang:2013}. Numerically,
Algorithm \ref{alg:rkm-vr} converges rather steadily. That is, it combines the strengthes of RKM and the
Landweber method: it merits the fast initial convergence of the former and the excellent stability of the latter.

\begin{algorithm}
  \centering
  \caption{Randomized Kaczmarz method with variance reduction (RKMVR).\label{alg:rkm-vr}}
  \begin{algorithmic}[1]
  \STATE Specify $A$, $b$, $x_0$, $K$, and $s$.
  \STATE Initialize $g_i(\tilde x) = 0$, and $\tilde g = 0$.
  \FOR {$k=1,\ldots,K$}
     \IF {$k\ \mathrm{mod}\ s=0$}
      \STATE Set $\tilde x = x_k$ and $\tilde g= g(x_k)$.
      \STATE Check the discrepancy principle \eqref{eqn:dp}.
     \ENDIF
     \STATE Pick an index $i$ according to \eqref{eqn:probability}.
     \STATE Update $x_k$ by
     \begin{equation*}
       x_{k+1} = x_k - \frac{n}{\|A\|_F^2}(g_i(x_k)-g_i(\tilde x) + \tilde g).
     \end{equation*}
  \ENDFOR
  \end{algorithmic}
\end{algorithm}

\section{Numerical experiments and discussions}\label{sec:numer}

Now we present numerical results for RKM and RKMVR to illustrate their distinct features. All the numerical
examples, i.e., \texttt{phillips}, \texttt{gravity} and \texttt{shaw}, are taken from the public domain
\texttt{MATLAB} package \textbf{Regutools}\footnote{Available from\url{http://www.imm.dtu.dk/~pcha/Regutools/},
last accessed on June 21, 2017}. They are Fredholm integral equations of the first kind, with the first
example being mildly ill-posed, and the last two severely ill-posed, respectively. Unless otherwise stated,
the examples are discretized with a dimension $n=m=1000$. The noisy data $b^\delta$ is generated from the
exact data $b$ as
\begin{equation*}
  b^\delta_i = b_i + \delta \max_{j}(|b_j|)\xi_i,\quad i =1,\ldots,n,
\end{equation*}
where $\delta$ is the relative noise level, and the random variables $\xi_i$s follow an
i.i.d. standard Gaussian distribution. The initial guess $x_0$ for the
iterative methods is $x_0=0$. We present the squared error $e_k$ and/or the squared residual $r_k$, i.e.,
\begin{equation}\label{eqn:err-res}
  e_k =\mathbb{E}[\|x^*-x_k\|^2]\quad \mbox{and}\quad r_k = \mathbb{E}[\|Ax_k-b^\delta\|^2].
\end{equation}
The expectation $\mathbb{E}[\cdot]$ with respect to the random choice of the rows is approximated
by the average of 100 independent runs. All the computations were carried out on a personal laptop
with 2.50 GHz CPU and 8.00G RAM by \texttt{MATLAB} 2015b.

\subsection{Benefit of randomization}

First we compare the performance of  RKM with the cyclic Kaczmarz method (KM) to illustrate the benefit
of randomization. Overall, the random reshuffling can substantially improve the convergence of KM, cf.
the results in Figs. \ref{fig:phil-kmrkm}-\ref{fig:shaw-kmrkm} for the examples with different noise levels.

\begin{figure}[hbt!]
  \centering
  \setlength{\tabcolsep}{0pt}
  \begin{tabular}{cccc}
   \includegraphics[trim={2.0cm 0 2.3cm 0.2cm},clip,width=.25\textwidth]{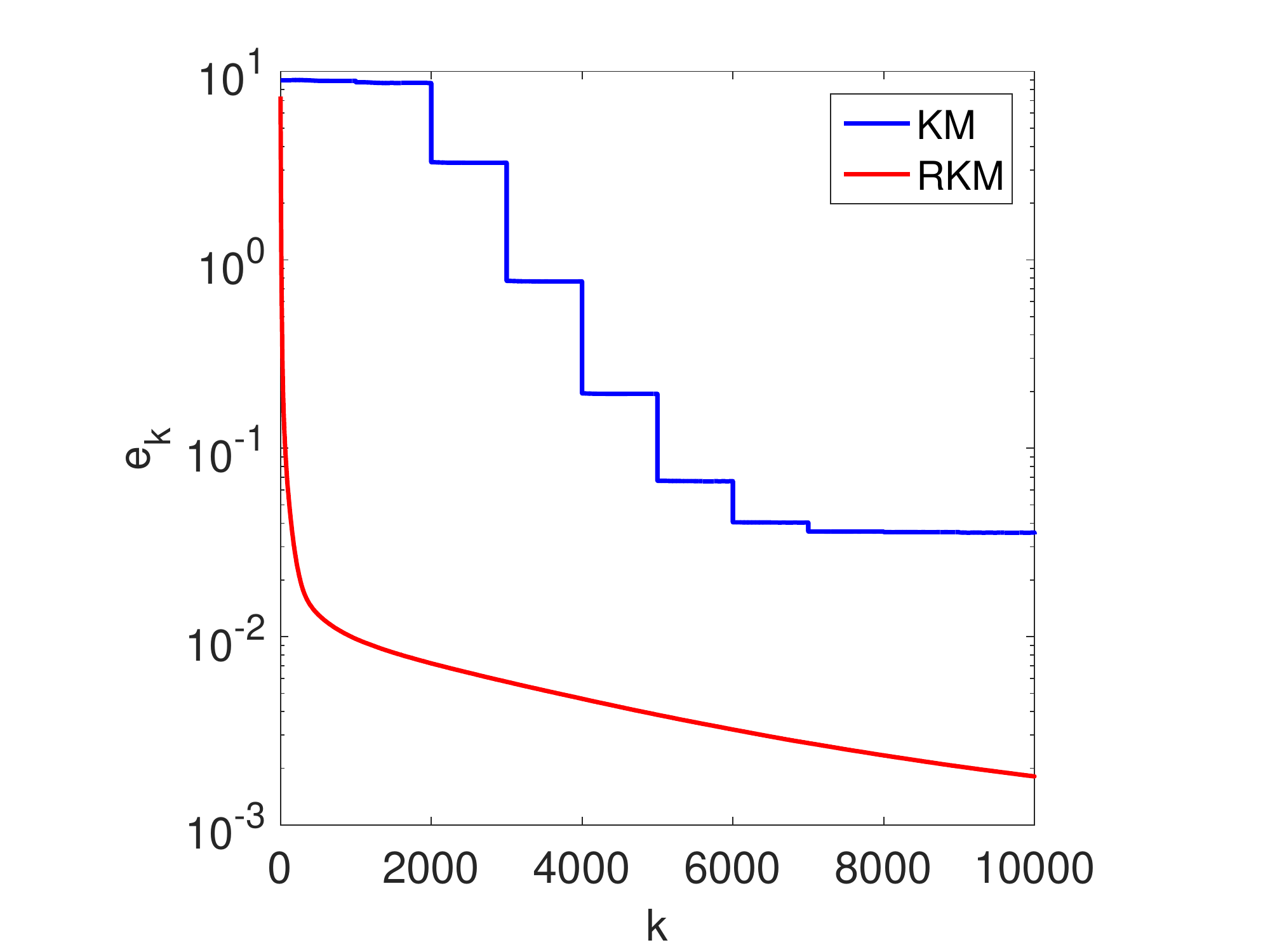} & \includegraphics[trim={2.cm 0 2.3cm 0.2cm},clip,width=.25\textwidth]{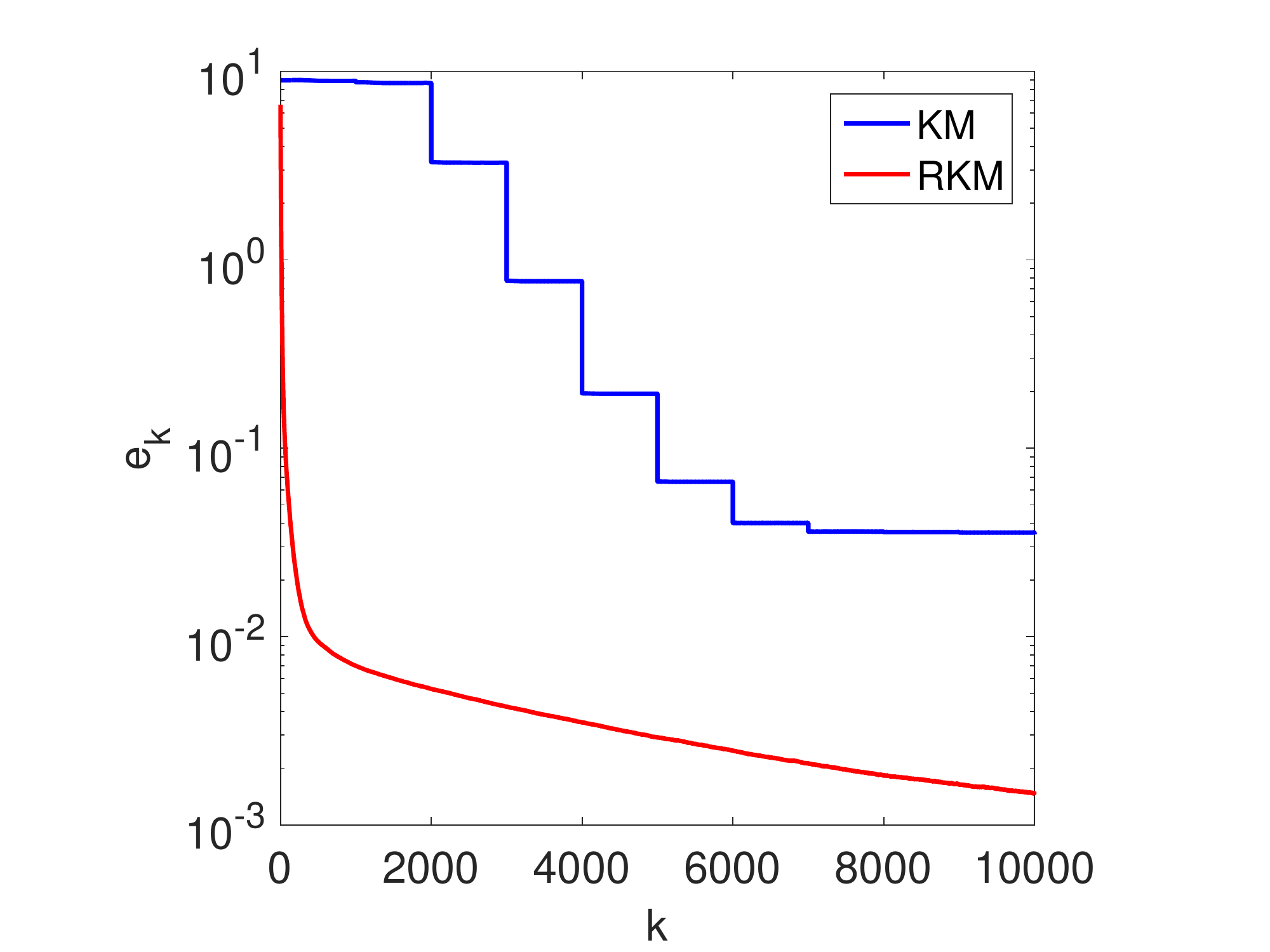}& \includegraphics[trim={2.0cm 0 2.3cm 0.2cm},clip,width=.25\textwidth]{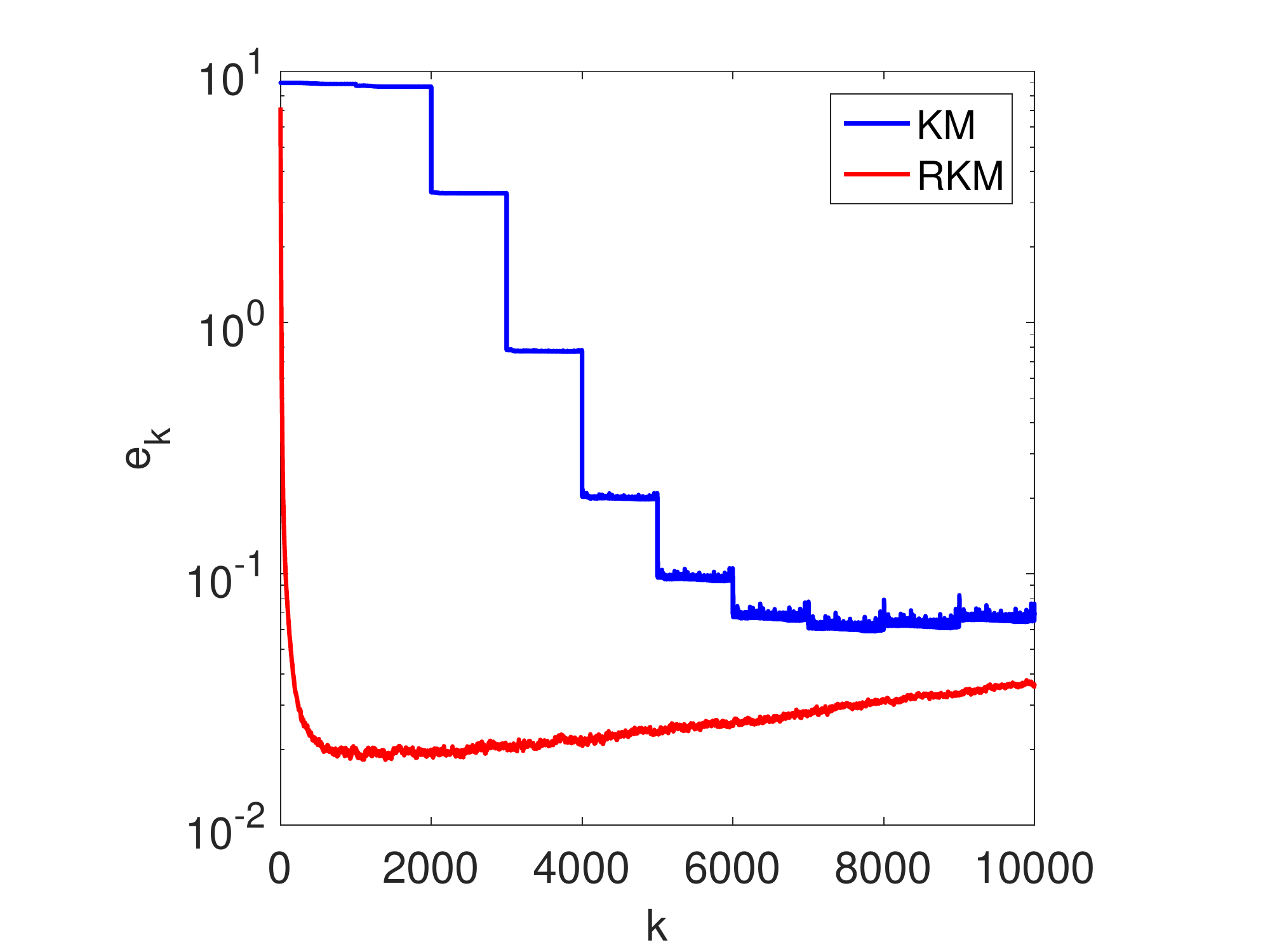} & \includegraphics[trim={2.cm 0 2.3cm 0.2cm},clip,width=.25\textwidth]{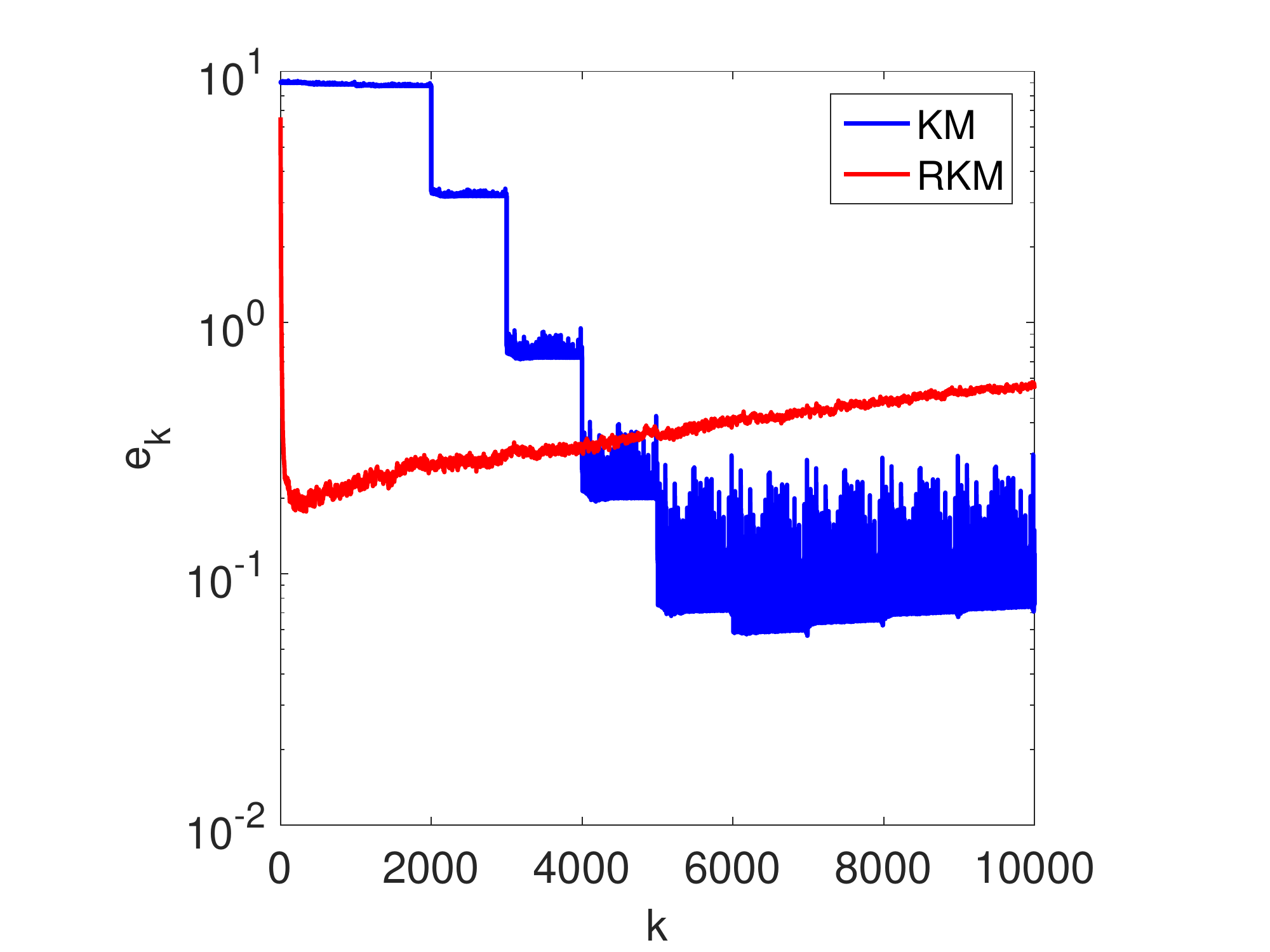}\\
   \includegraphics[trim={2cm 0 2.3cm 0.2cm},clip,width=.25\textwidth]{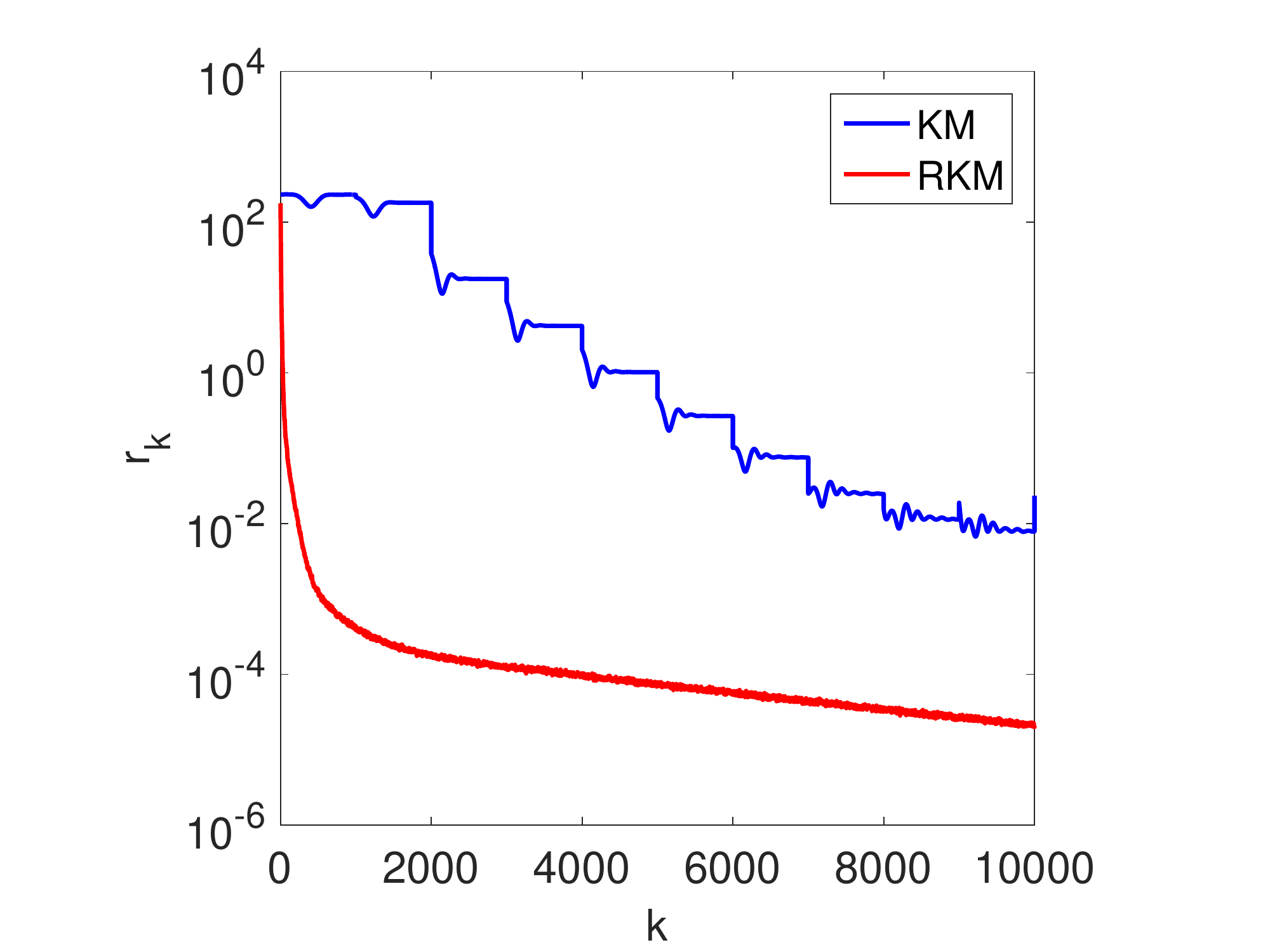} & \includegraphics[trim={2cm 0 2.3cm 0.2cm},clip,width=.25\textwidth]{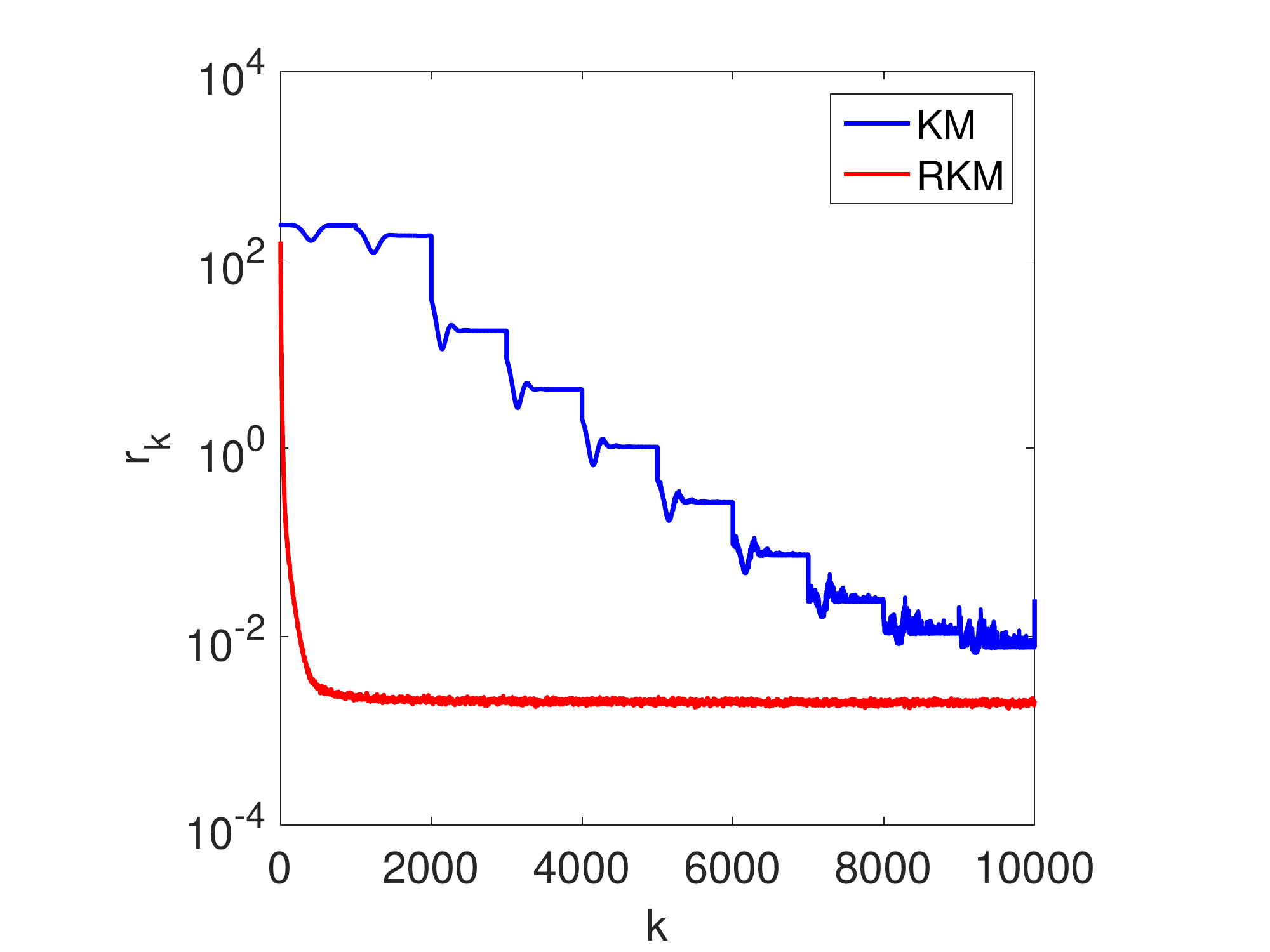}& \includegraphics[trim={2cm 0 2.3cm 0.2cm},clip,width=.25\textwidth]{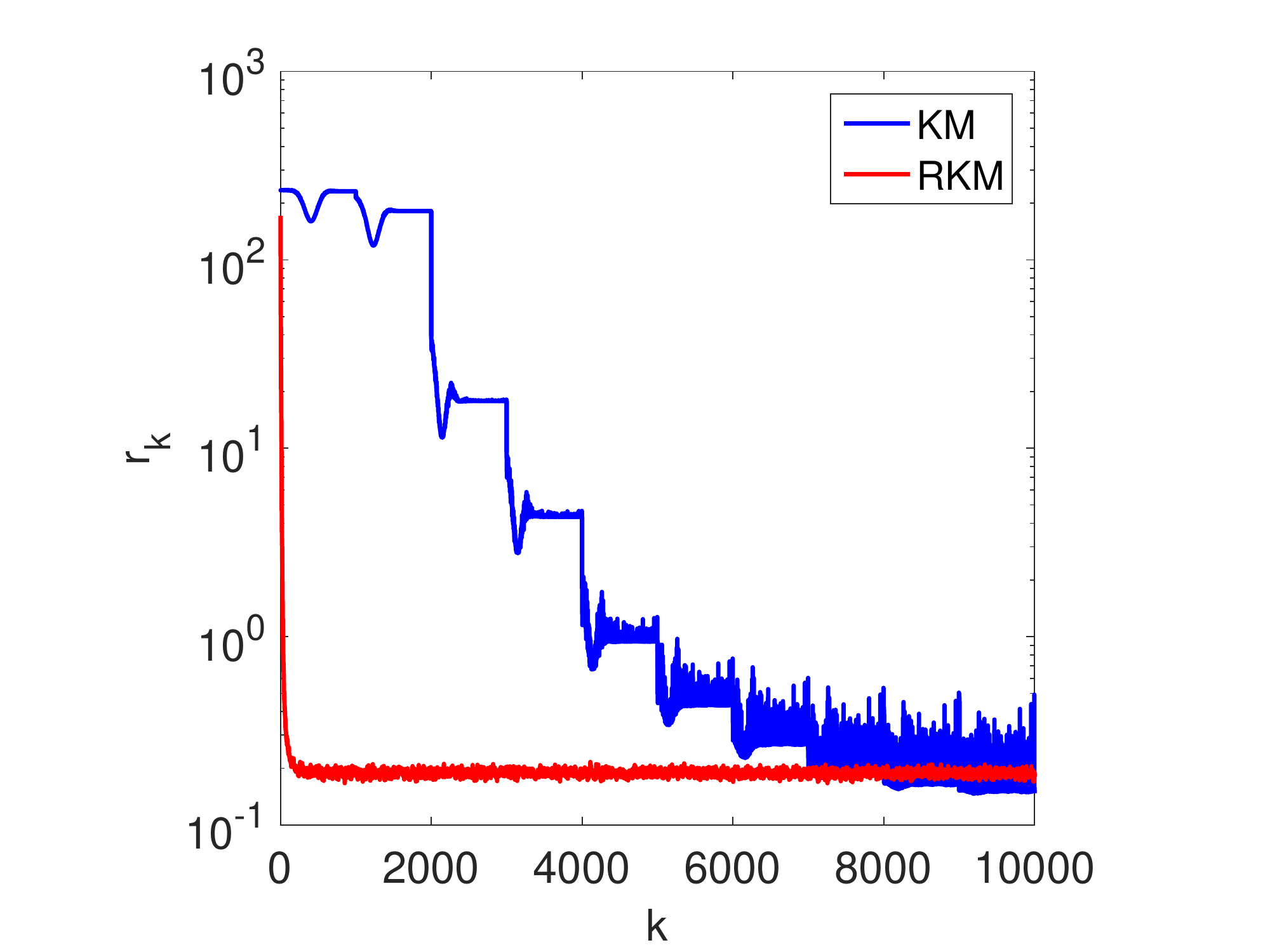} & \includegraphics[trim={2cm 0 2.3cm 0.2cm},clip,width=.25\textwidth]{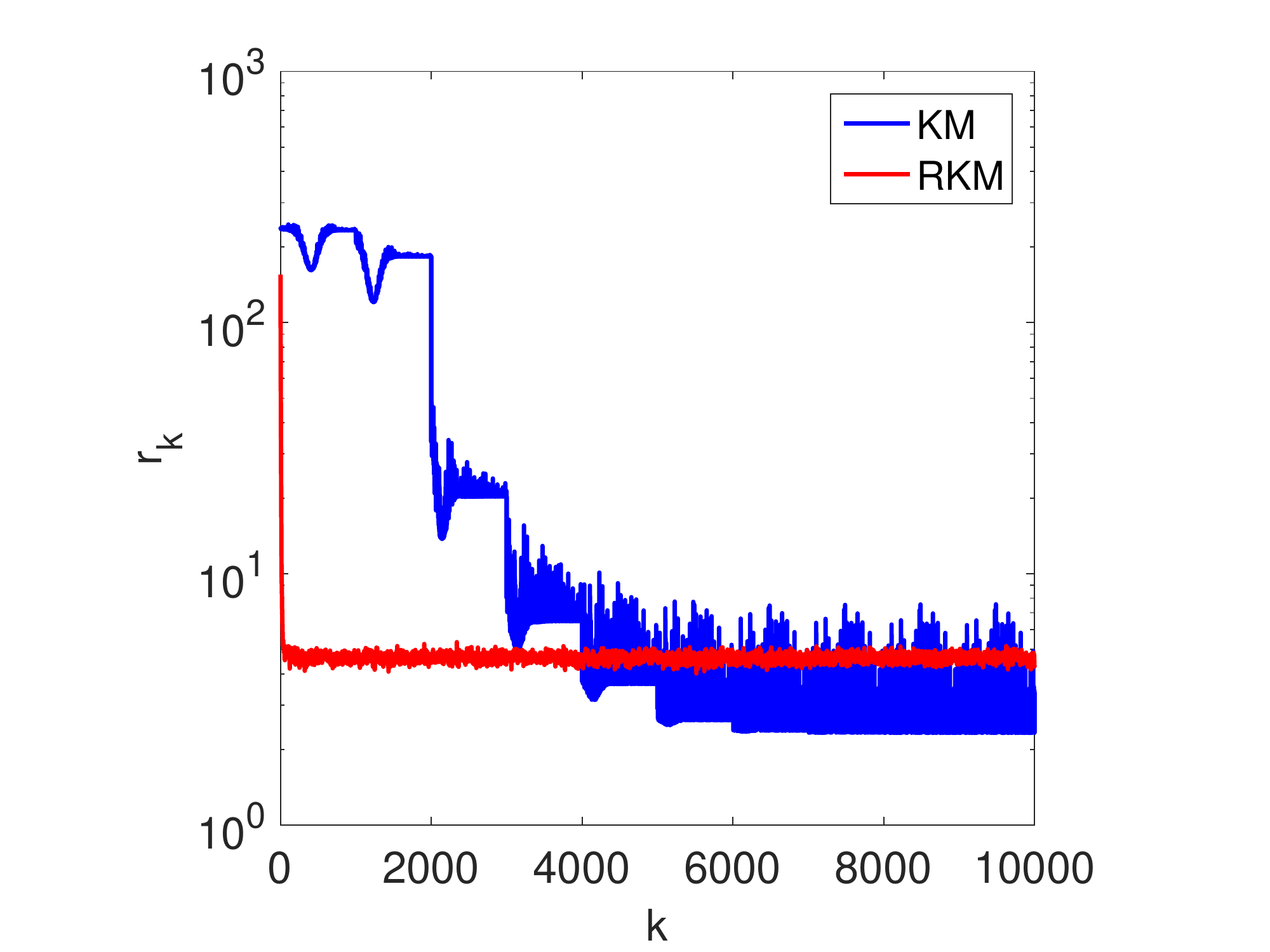}\\
   (a) $\delta=0$ & (b) $\delta=10^{-3}$ & (c) $\delta = 10^{-2}$ & (d) $\delta=5\times10^{-2}$
  \end{tabular}
  \caption{Numerical results ($e_k$ and $r_k$) for \texttt{phillips} by KM and RKM. \label{fig:phil-kmrkm}}
\end{figure}

Next we examine the convergence more closely. The (squared) error $e_k$ of the Kaczmarz iterate $x_k$ undergoes
a sudden drop at the end of each cycle, whereas within the cycle, the drop after each Kaczmarz iteration is small.
Intuitively, this can be attributed to the fact that the neighboring rows of the matrix $A$ are highly correlated
to each other, and thus each single Kaczmarz iteration reduces only very little the (squared) error $e_k$, since
roughly it repeats the previous projection. The strong correlation between the neighboring rows is the culprit of
the slow convergence of the cyclic KM. The randomization ensures that any two rows chosen by two consecutive RKM
iterations are less correlated, and thus the iterations are far more effective for reducing the error $e_k$, leading
to a much faster empirical convergence. These observations hold for both exact and noisy data. For noisy data, the
error $e_k$ first decreases and then increases for both KM and RKM, and the larger is the noise level $\delta$,
and the earlier does the divergence occur. That is, both exhibit a ``semiconvergence'' phenomenon typical for
iterative regularization methods. Thus a suitable stopping criterion is needed. Meanwhile, the residual $r_k$
tends to decrease, but for both methods, it oscillates wildly for noisy data and the oscillation magnitude
increases with $\delta$. This is due to the nonvanishing variance, cf. the discussions in Section \ref{sec:implement}.
One surprising observation is that a fairly reasonable inverse solution can be obtained by RKM within one cycle
of iterations. That is, by ignoring all other cost, RKM can solve the inverse problems reasonably well at a cost
less than one full gradient evaluation!

\begin{figure}[hbt!]
  \centering
  \setlength{\tabcolsep}{0pt}
  \begin{tabular}{cccc}
   \includegraphics[trim={2.0cm 0 2.3cm 0.2cm},clip,width=.25\textwidth]{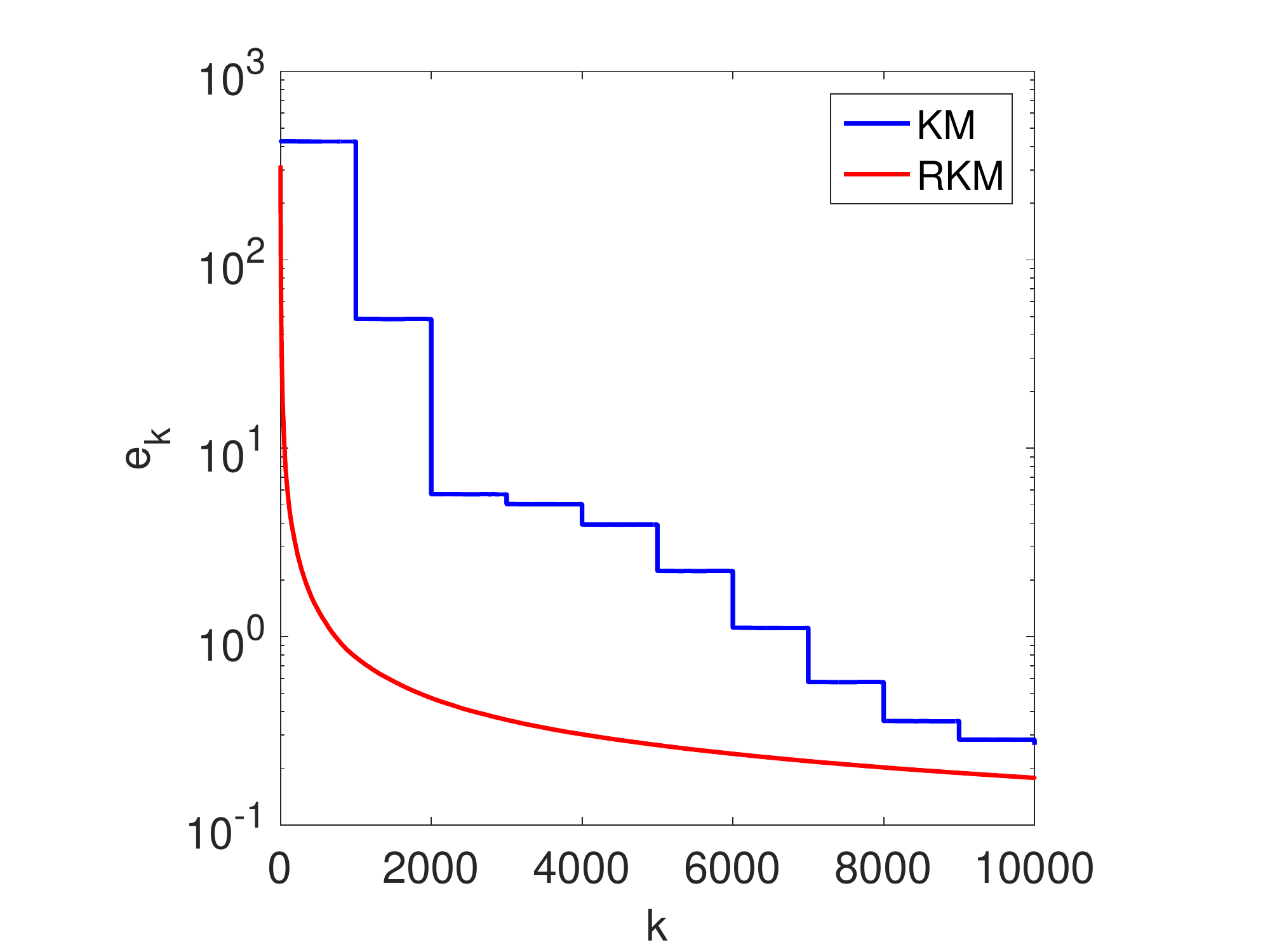}& \includegraphics[trim={2.0cm 0 2.3cm 0.2cm},clip,width=.25\textwidth]{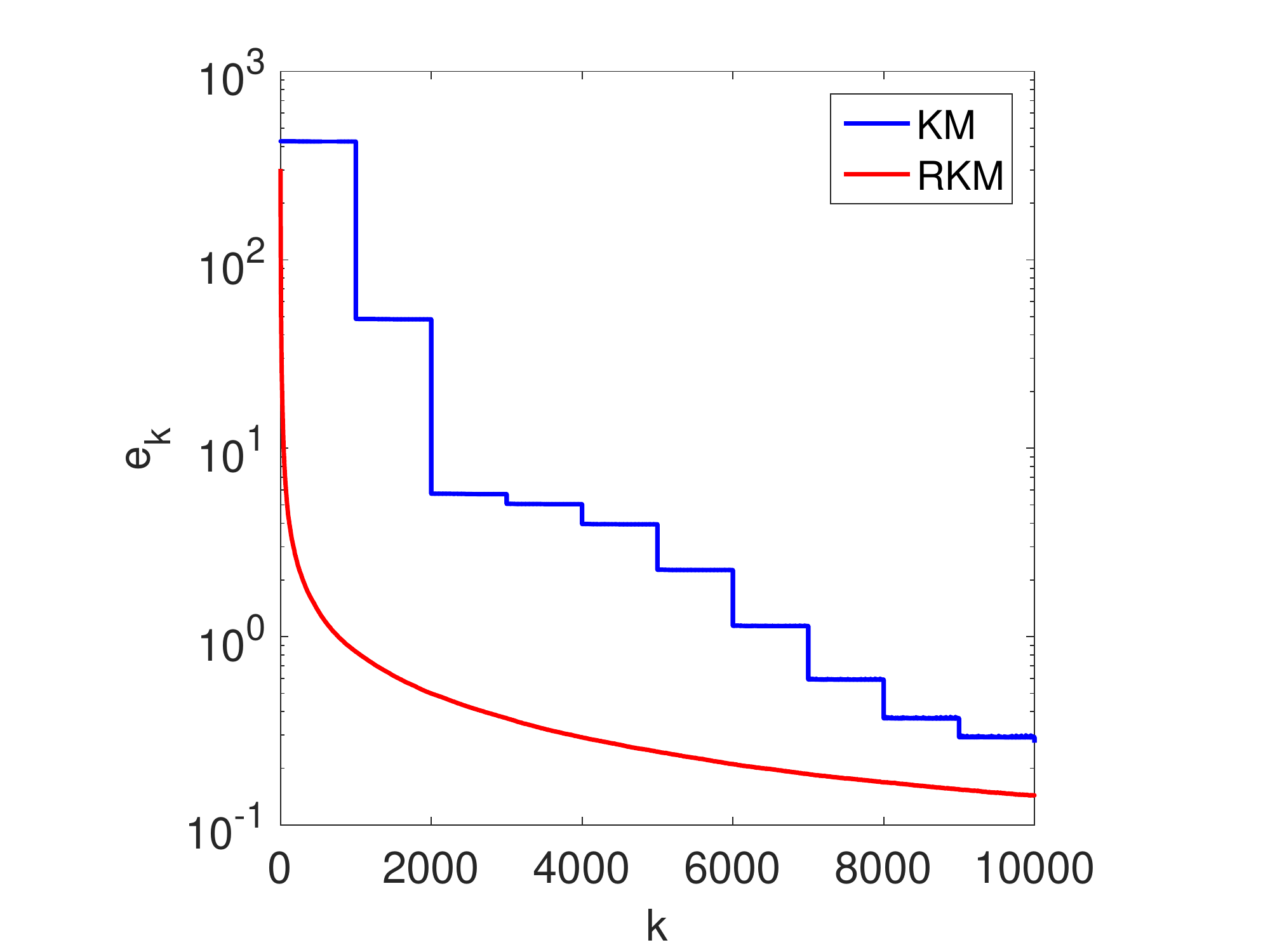}& \includegraphics[trim={2.0cm 0 2.3cm 0.2cm},clip,width=.25\textwidth]{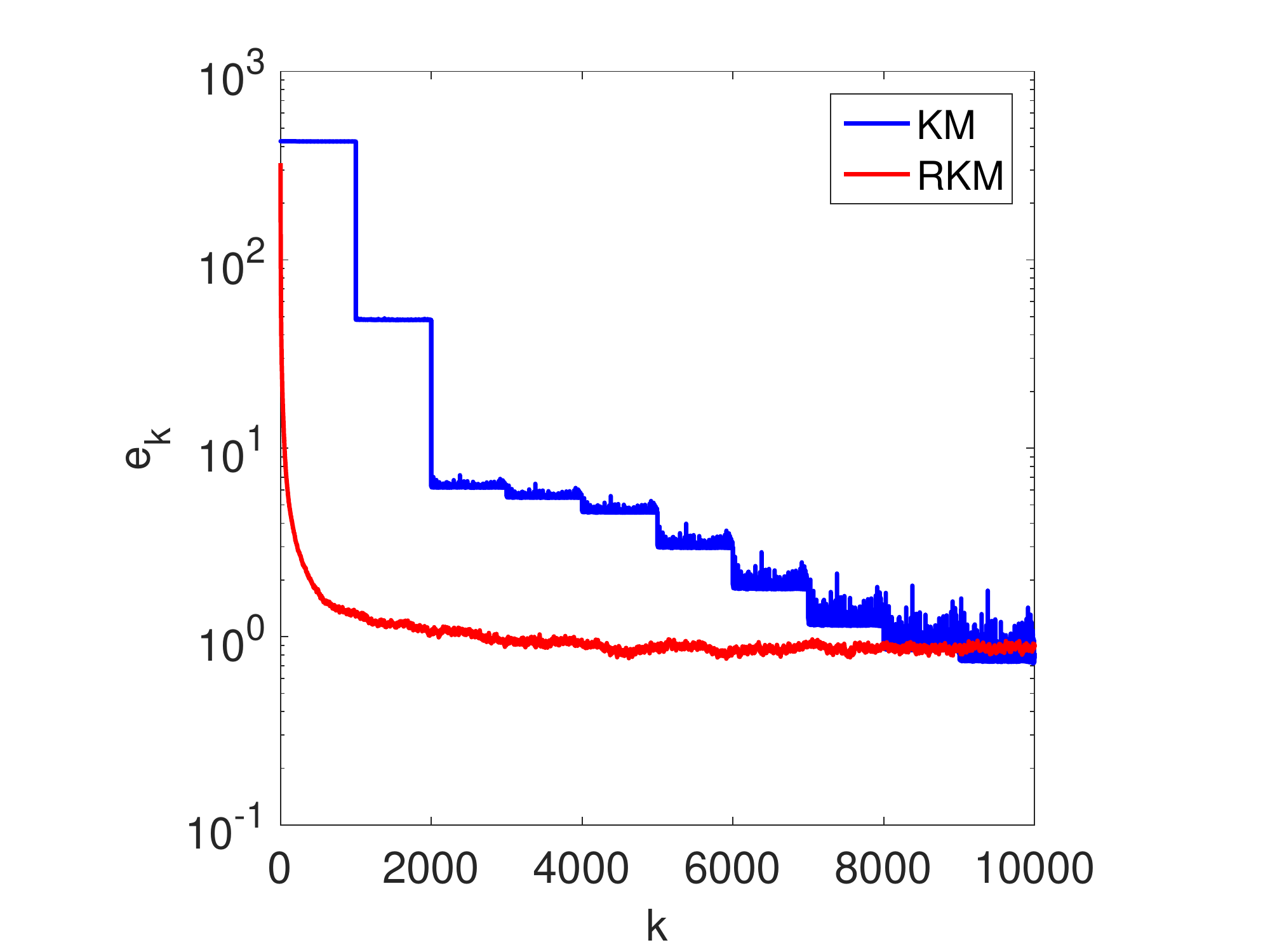}& \includegraphics[trim={2.0cm 0 2.3cm 0.2cm},clip,width=.25\textwidth]{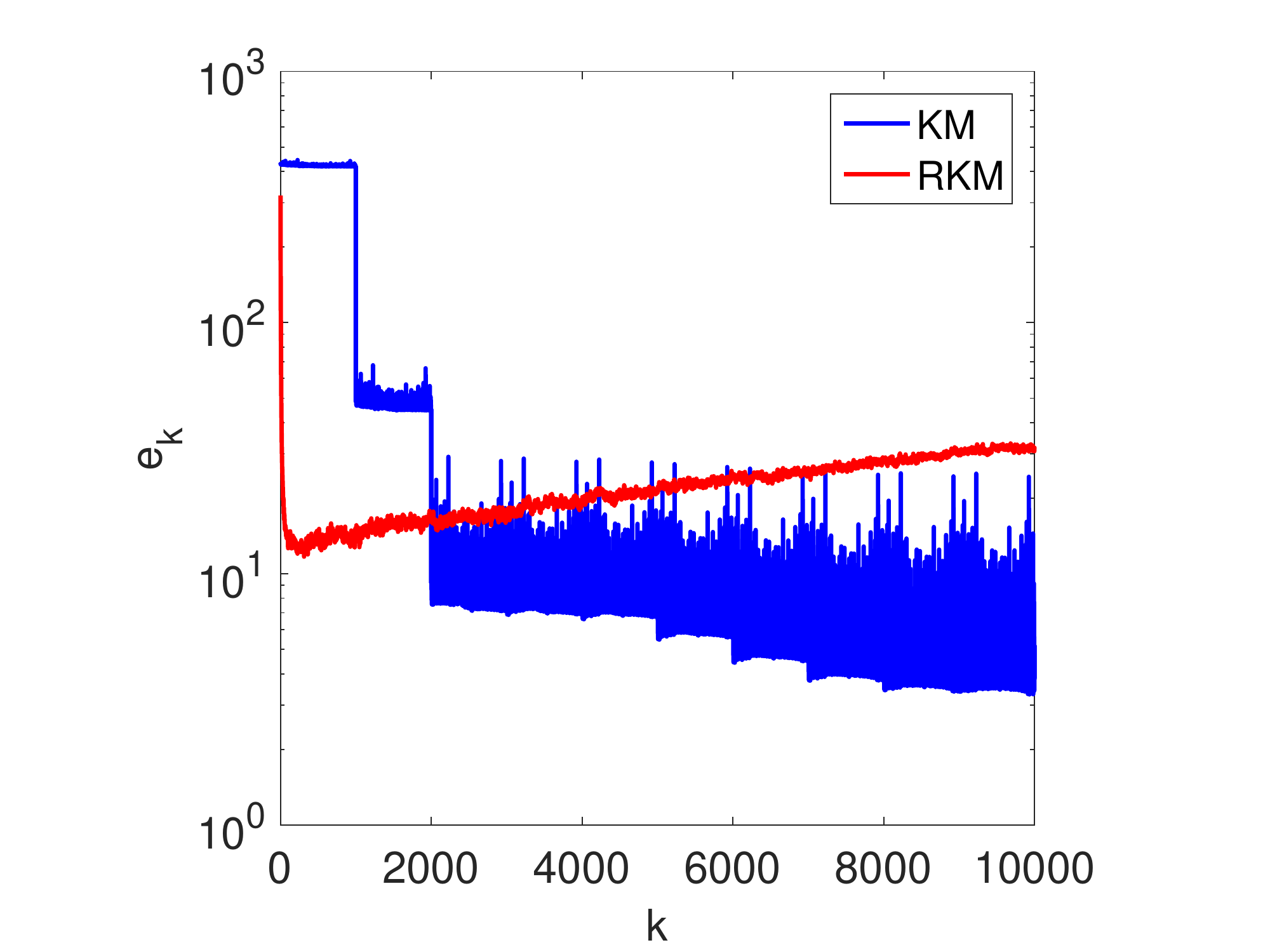}\\
   \includegraphics[trim={2.0cm 0 2.3cm 0.2cm},clip,width=.25\textwidth]{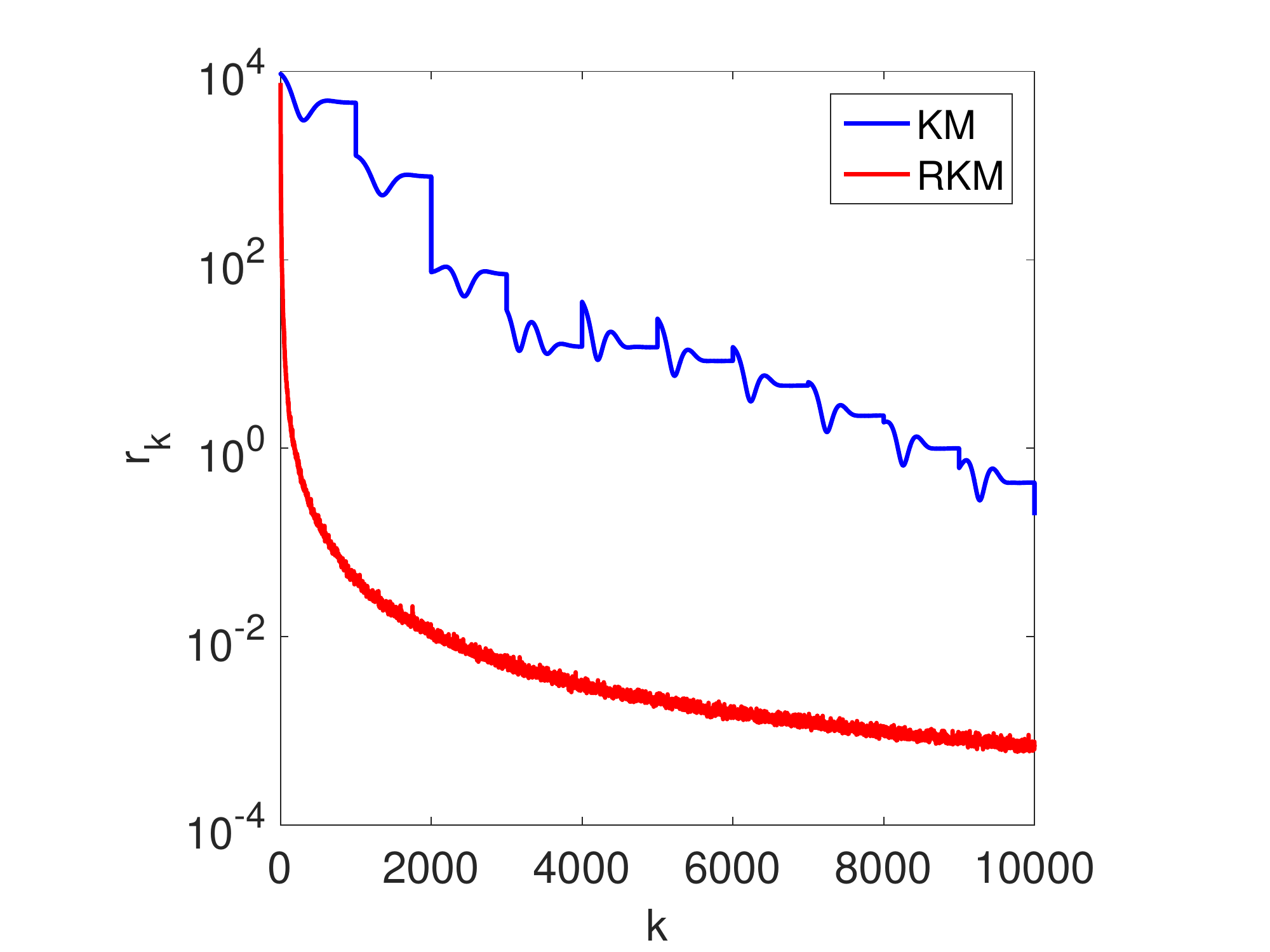}& \includegraphics[trim={2.0cm 0 2.3cm 0.2cm},clip,width=.25\textwidth]{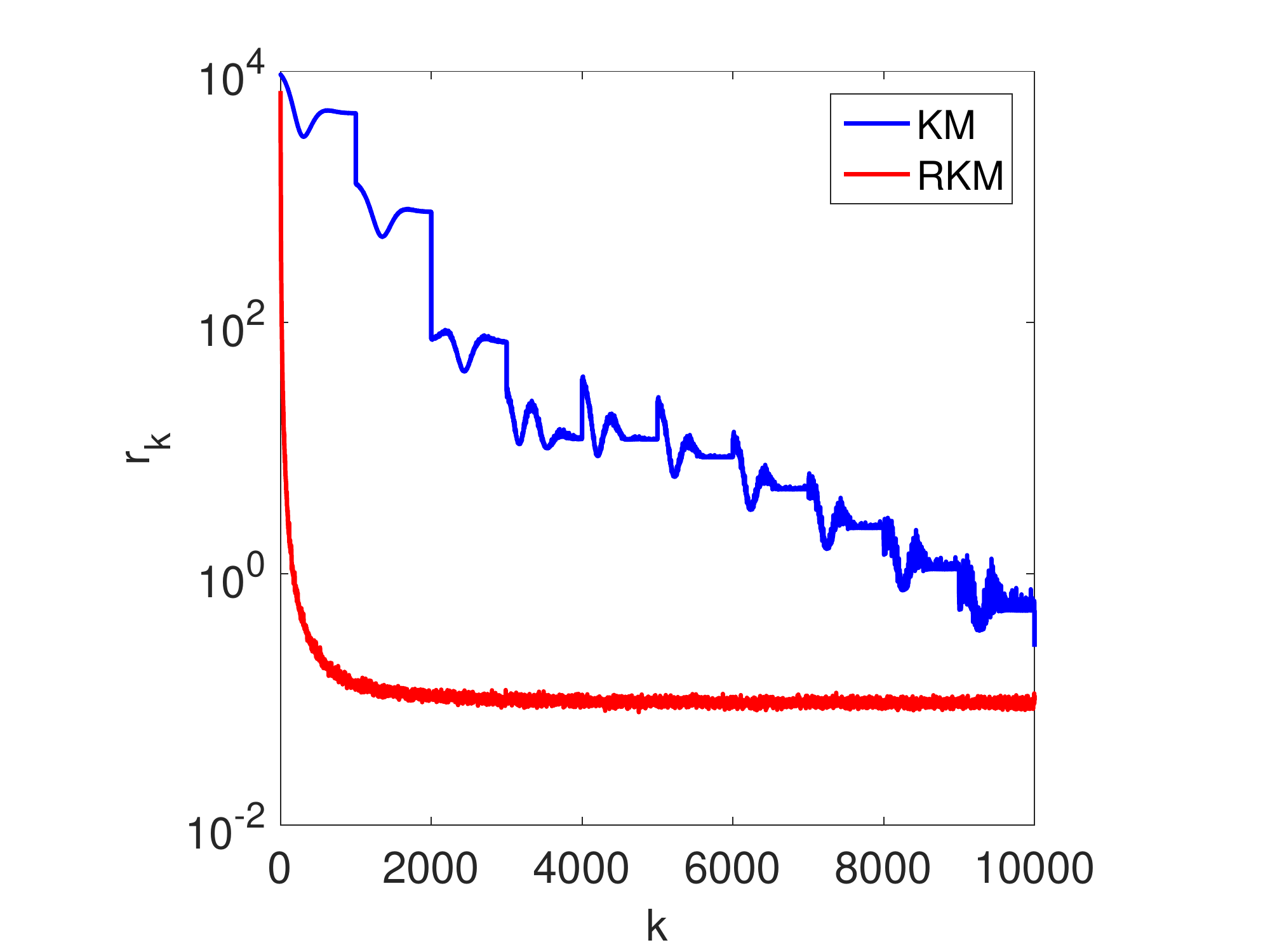}& \includegraphics[trim={2.0cm 0 2.3cm 0.2cm},clip,width=.25\textwidth]{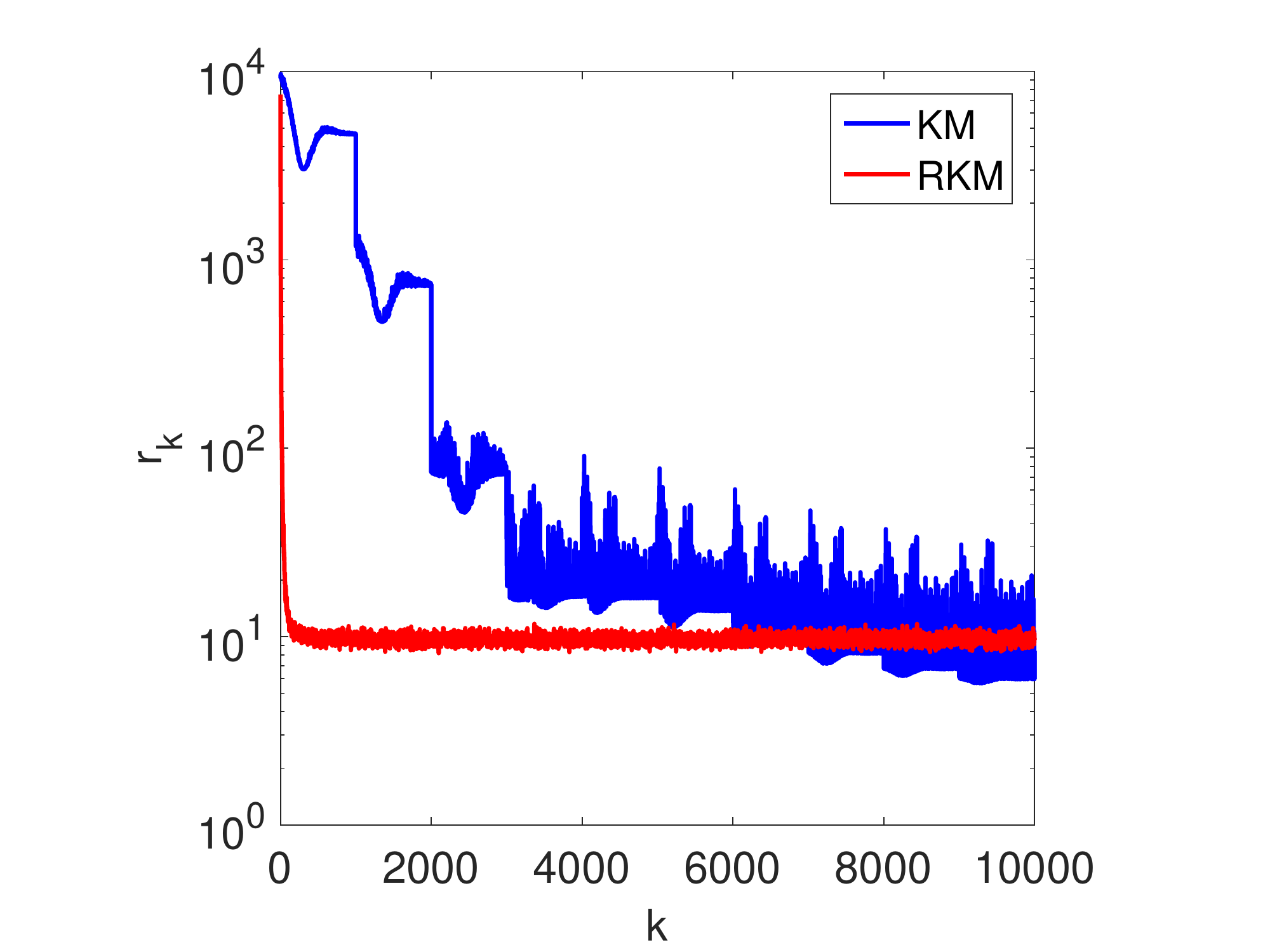}& \includegraphics[trim={2.0cm 0 2.3cm 0.2cm},clip,width=.25\textwidth]{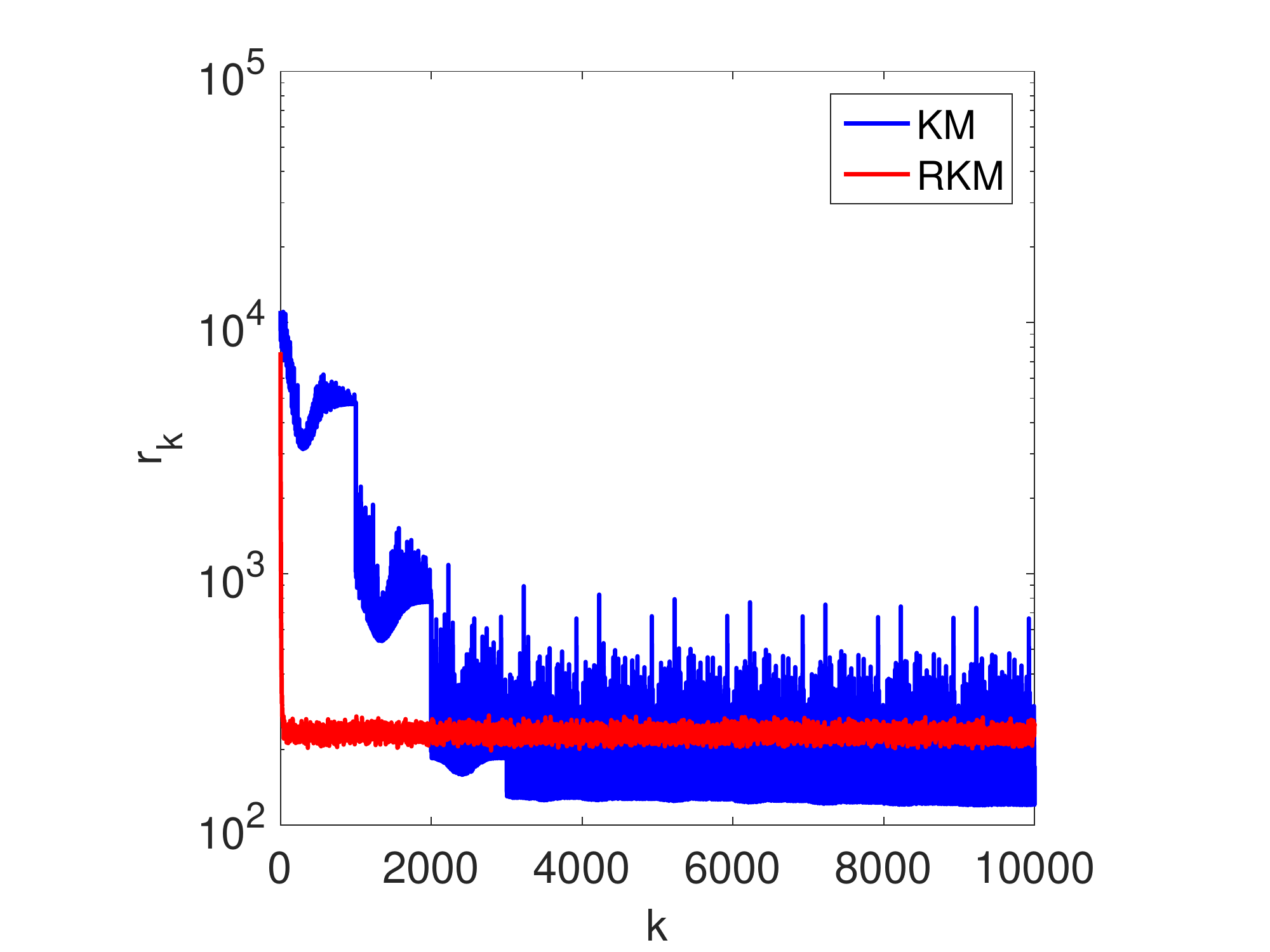}\\
   (a) $\delta=0$ & (b) $\delta=10^{-3}$ & (c) $\delta = 10^{-2}$ & (d) $\delta=5\times10^{-2}$
  \end{tabular}
  \caption{Numerical results ($e_k$ and $r_k$) for \texttt{gravity} by  KM and RKM.\label{fig:grav-kmrkm}}
\end{figure}

\begin{figure}[hbt!]
  \centering
  \setlength{\tabcolsep}{0pt}
  \begin{tabular}{cccc}
   \includegraphics[trim={2.0cm 0 2.3cm 0.2cm},clip,width=.25\textwidth]{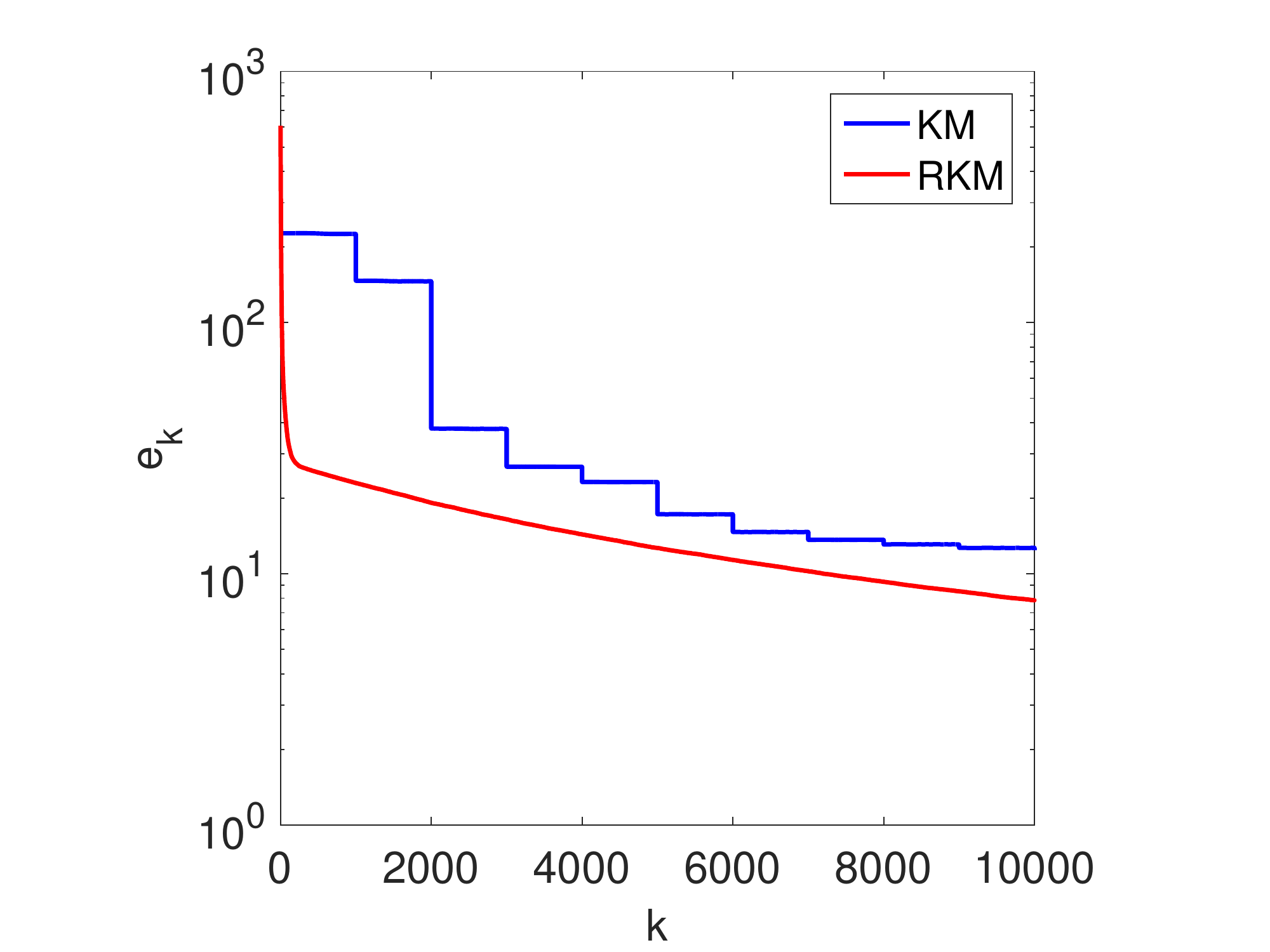}& \includegraphics[trim={2.0cm 0 2.3cm 0.2cm},clip,width=.25\textwidth]{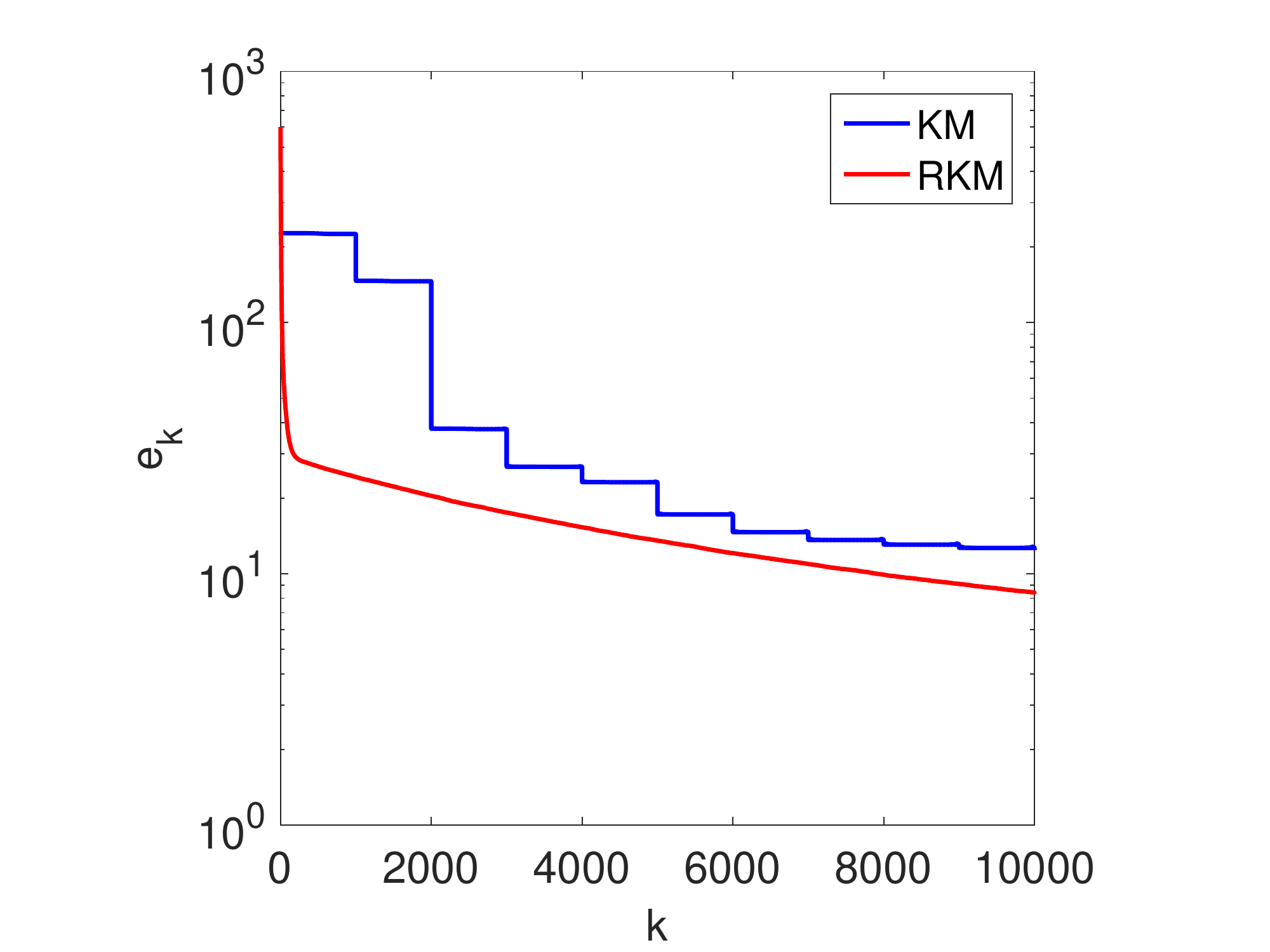}& \includegraphics[trim={2.0cm 0 2.3cm 0.2cm},clip,width=.25\textwidth]{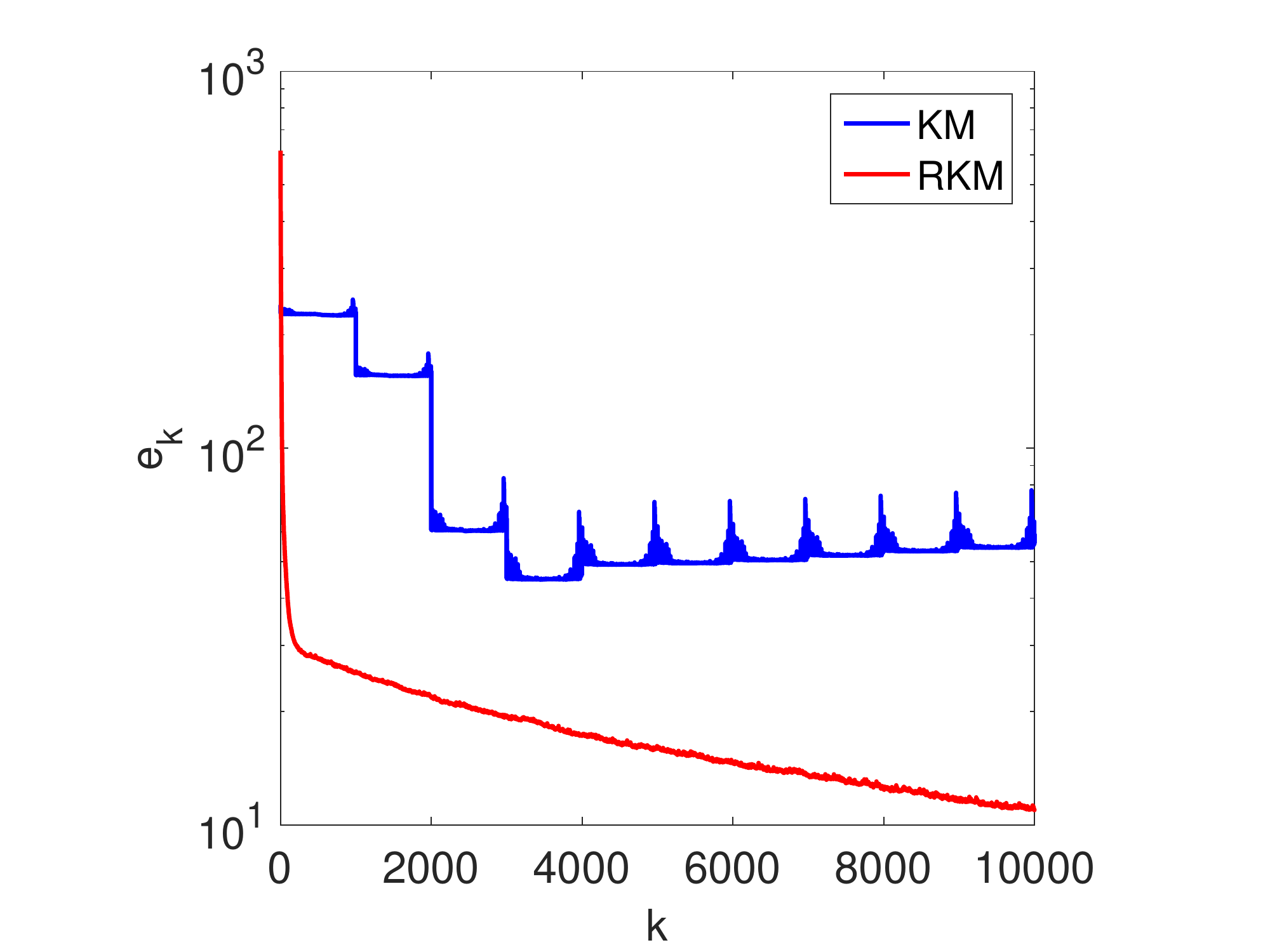}& \includegraphics[trim={2.0cm 0 2.3cm 0.2cm},clip,width=.25\textwidth]{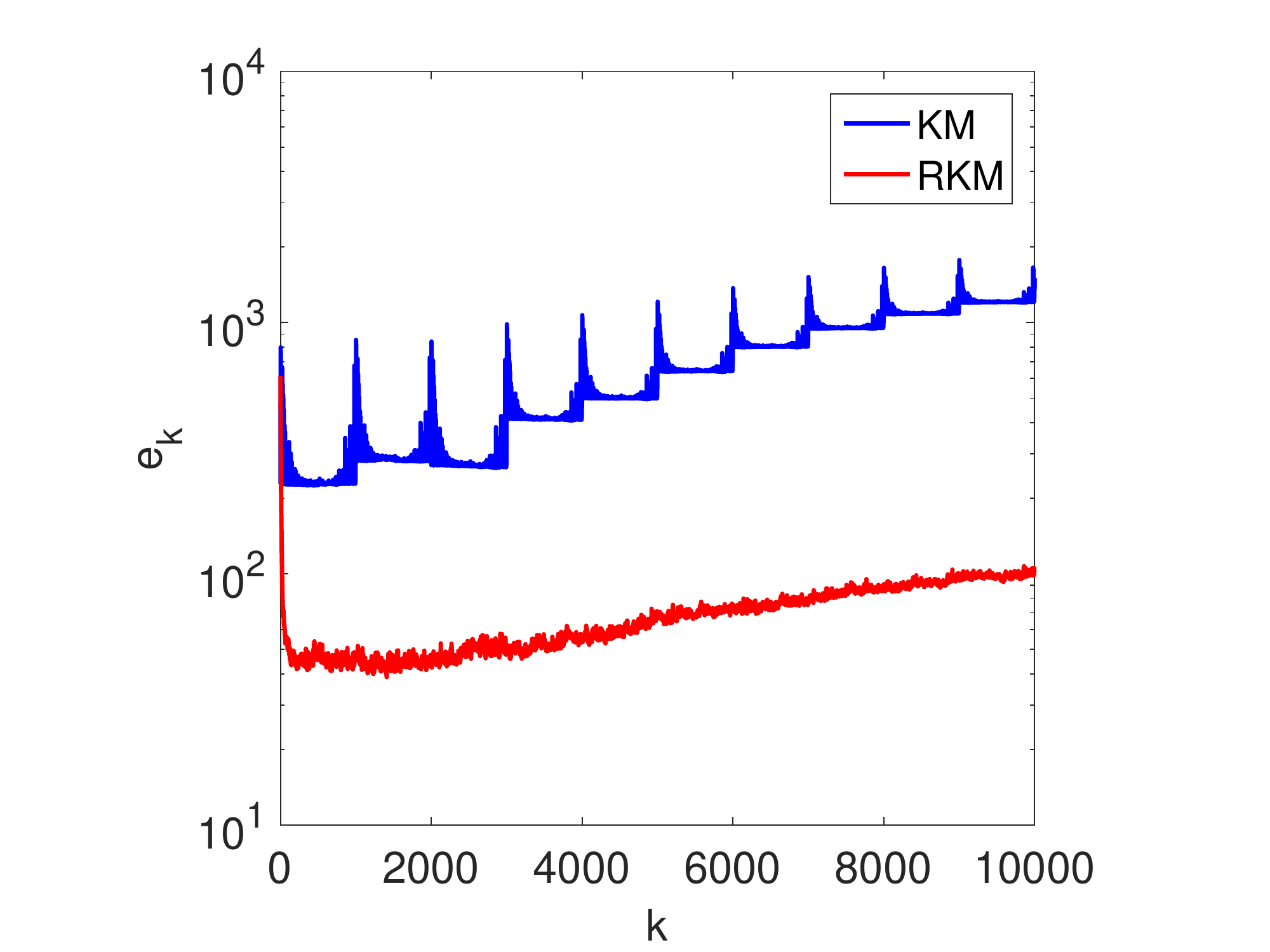}\\
   \includegraphics[trim={2.0cm 0 2.3cm 0.2cm},clip,width=.25\textwidth]{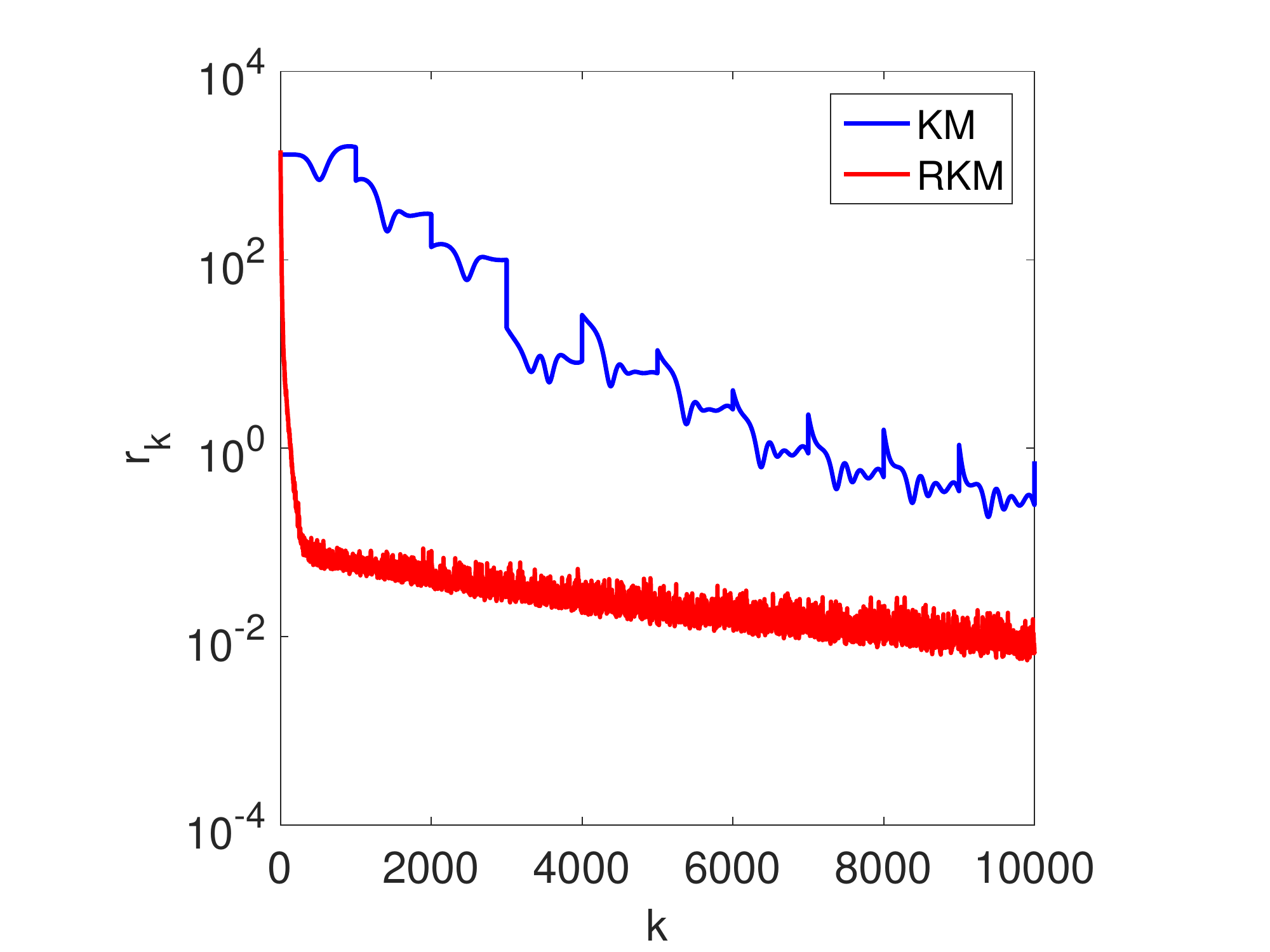}& \includegraphics[trim={2.0cm 0 2.3cm 0.2cm},clip,width=.25\textwidth]{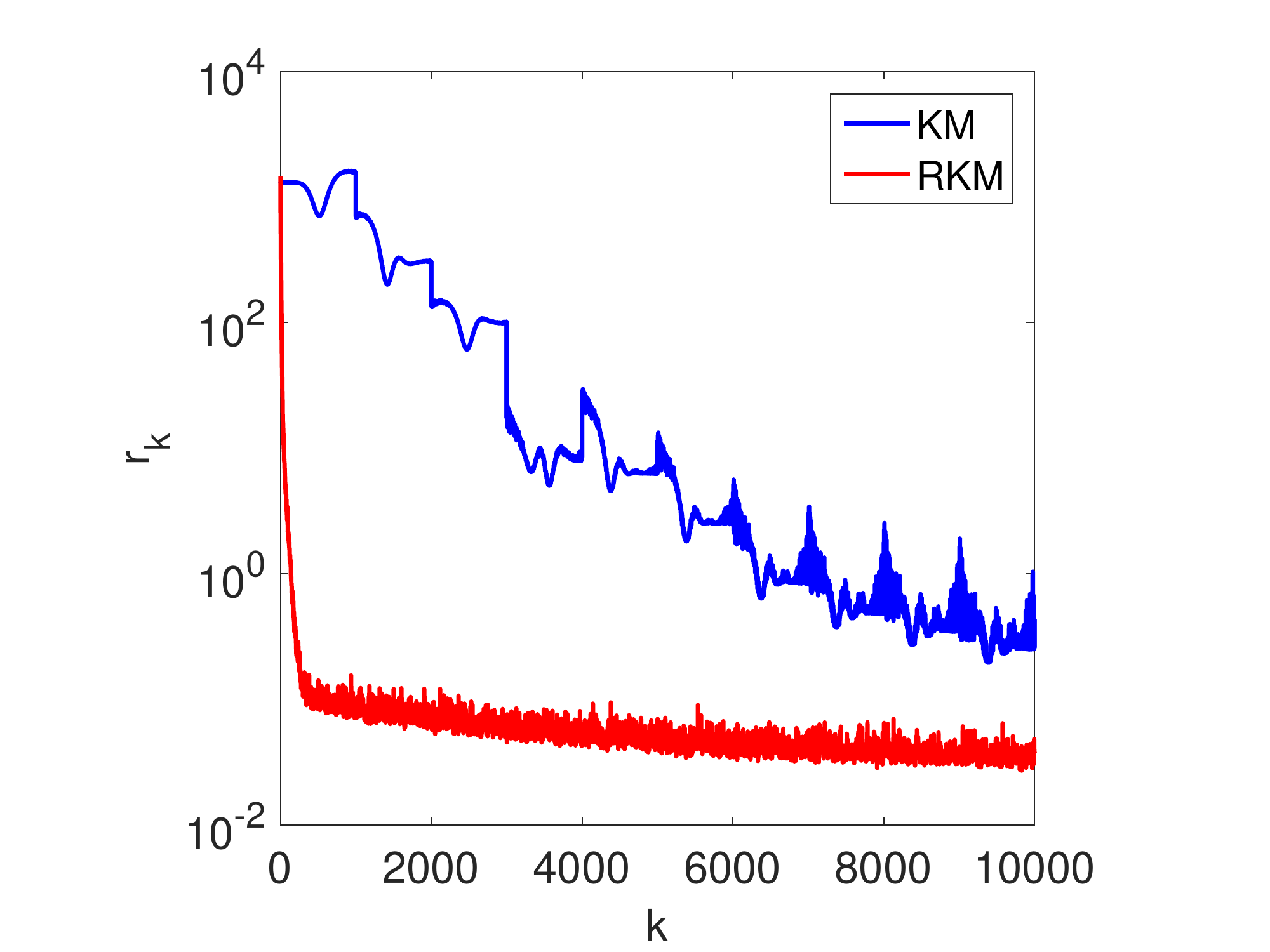}& \includegraphics[trim={2.0cm 0 2.3cm 0.2cm},clip,width=.25\textwidth]{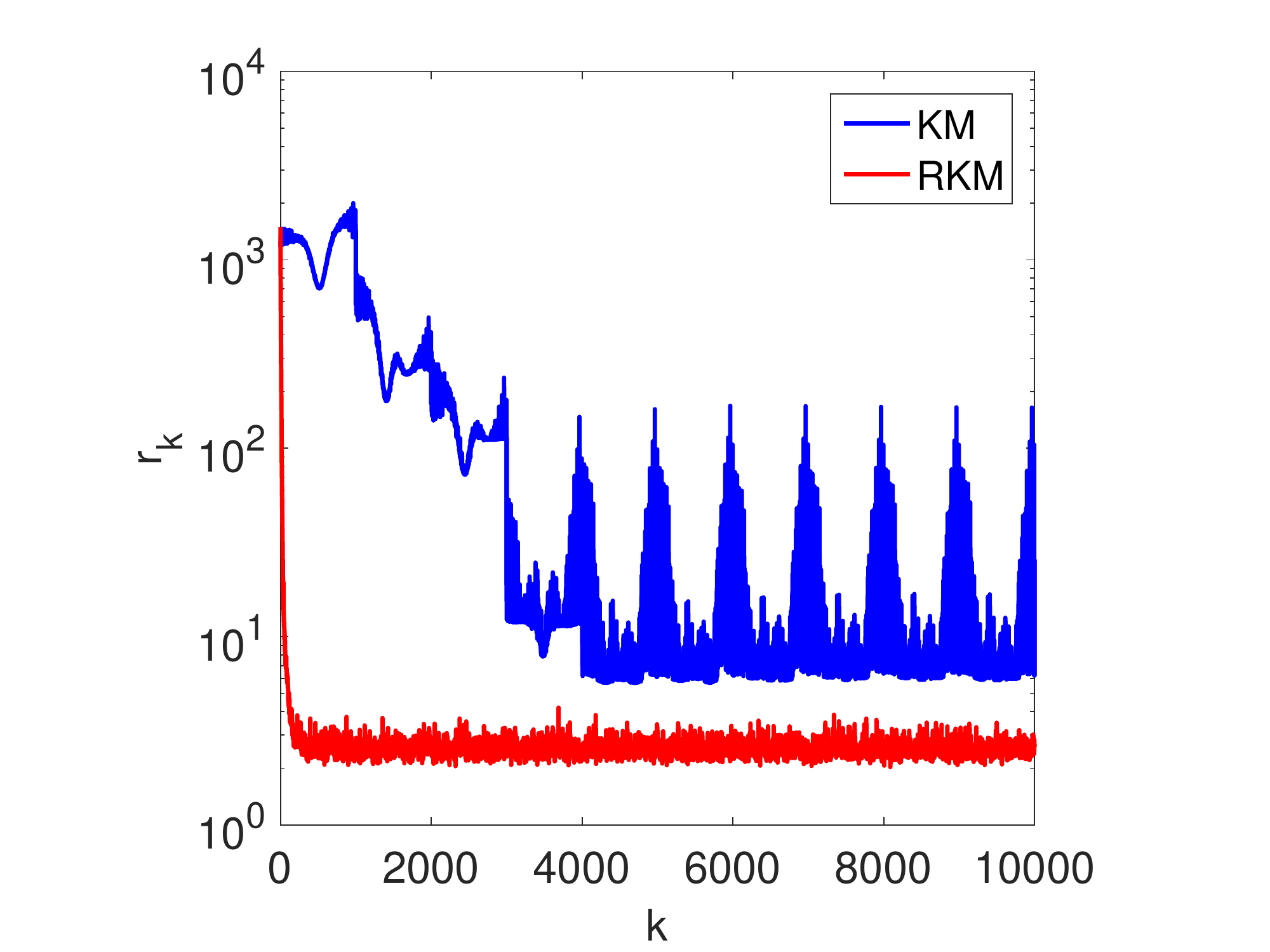}& \includegraphics[trim={2.0cm 0 2.3cm 0.2cm},clip,width=.25\textwidth]{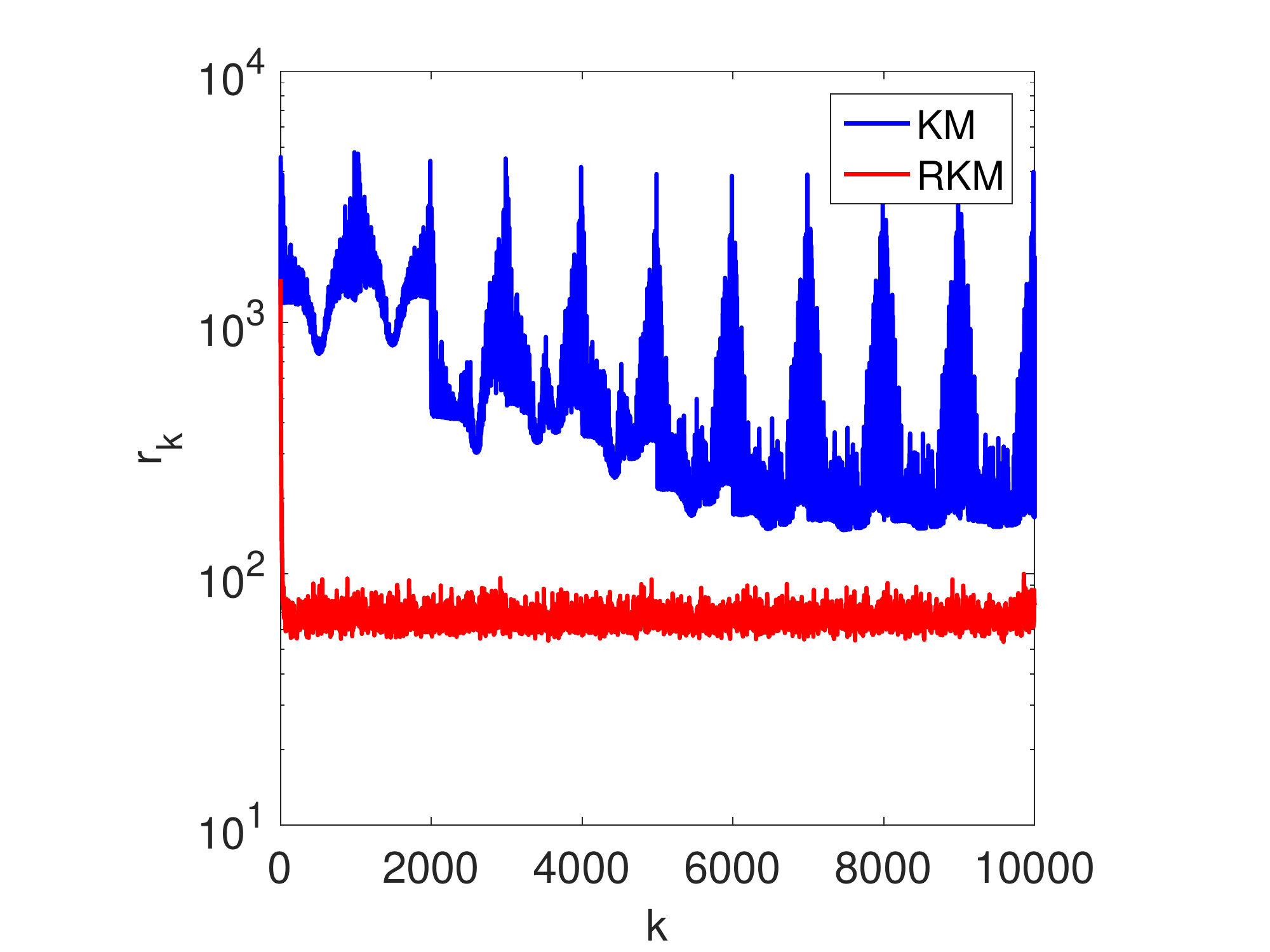}\\
   (a) $\delta=0$ & (b) $\delta=10^{-3}$ & (c) $\delta = 10^{-2}$ & (d) $\delta=5\times10^{-2}$
  \end{tabular}
  \caption{Numerical results ($e_k$ and $r_k$) for \texttt{shaw} by  KM and RKM.\label{fig:shaw-kmrkm}}
\end{figure}

\subsection{Preasymptotic convergence}

Now we examine the convergence of RKM. Theorems \ref{thm:err-exact} and \ref{thm:err-noise} predict that
during first iterations, the low-frequency error $e_L=\E[\|P_Le_k\|^2]$ decreases rapidly, but the
high-frequency error $e_H=\E[\|P_He_k\|^2]$ can at best decay mildly. For all examples, the first five
singular vectors can capture the majority of the energy of the initial error $x^*-x_0$. Thus, we choose
a truncation level $L=5$, and plot the evolution of low-frequency and high-frequency errors $e_L$ and
$e_H$, and the total error $e=\E[\|e_k\|^2]$, in Fig. \ref{fig:decom}.

Numerically, the low-frequency error $e_L$ decays much more rapidly during the initial iterations, and since
the low-frequency modes are dominant, the total error $e$ also enjoys a very fast initial decay. Intuitively,
this behavior may be explained as follows. The rows of the matrix ${A}$ mainly contain low-frequency modes,
and thus each RKM iteration tends to mostly decrease the low-frequency error $e_L$ of the initial error
$x^*-x_0$. The high-frequency error $e_H$ experiences a similar but slower decay during the iteration, and
then levels off. These observations fully confirm the preasymptotic analysis in Section \ref{sec:conv}. For
noisy data, the error $e_k$ can be highly oscillating, so is the residual $r_k$. The larger is the noise
level $\delta$, the larger is the oscillation magnitude. However, the degree of ill-posedness of the problem
seems not to affect the convergence of RKM, so long as $x^*$ is mainly composed of low-frequency modes.

\begin{figure}[hbt!]
  \centering
  \setlength{\tabcolsep}{0pt}
  \begin{tabular}{ccc}
    \includegraphics[trim={2cm 0 2cm 0},clip,width=.33\textwidth]{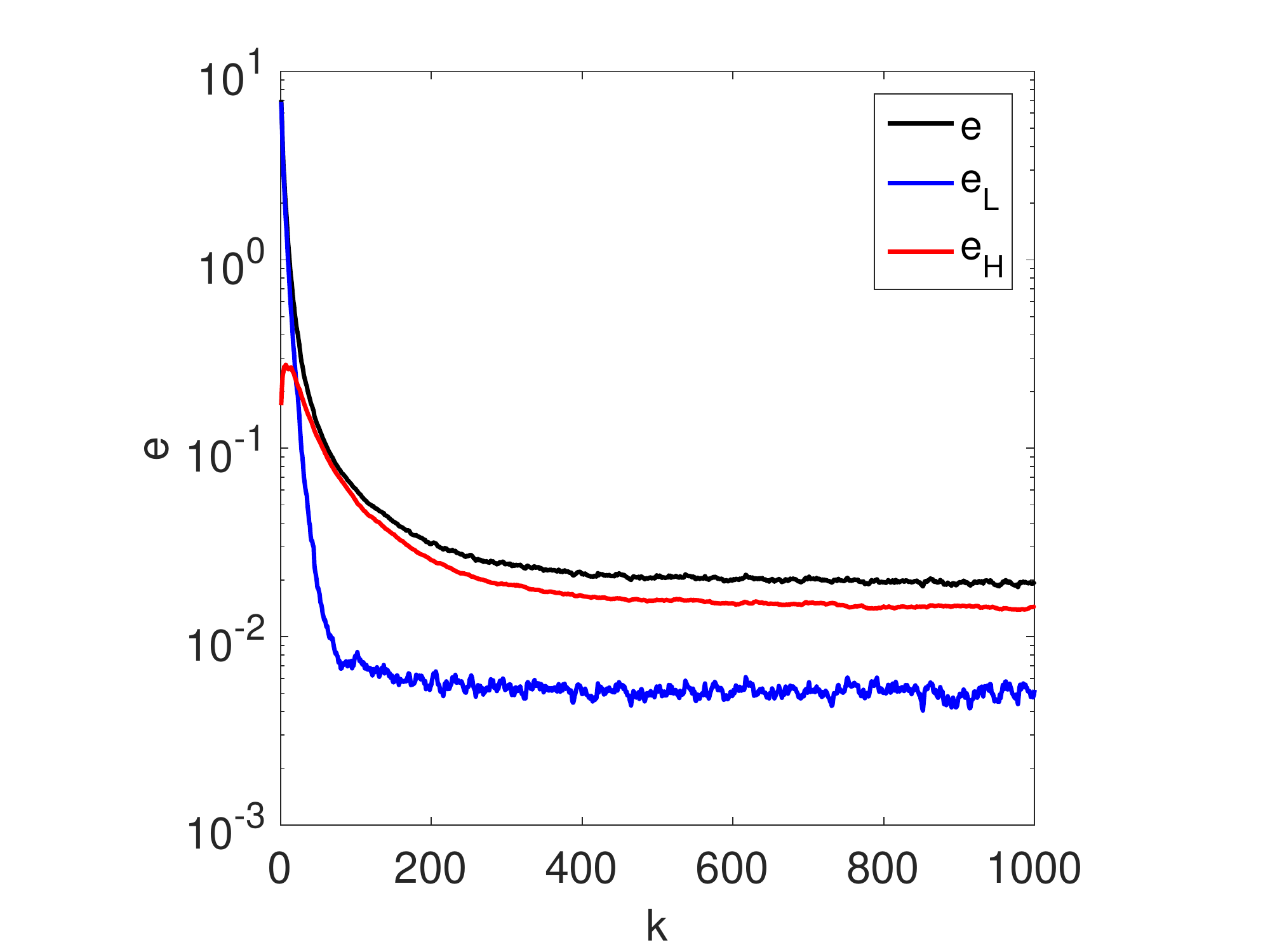} & \includegraphics[trim={2cm 0 2cm 0},clip,width=.33\textwidth]{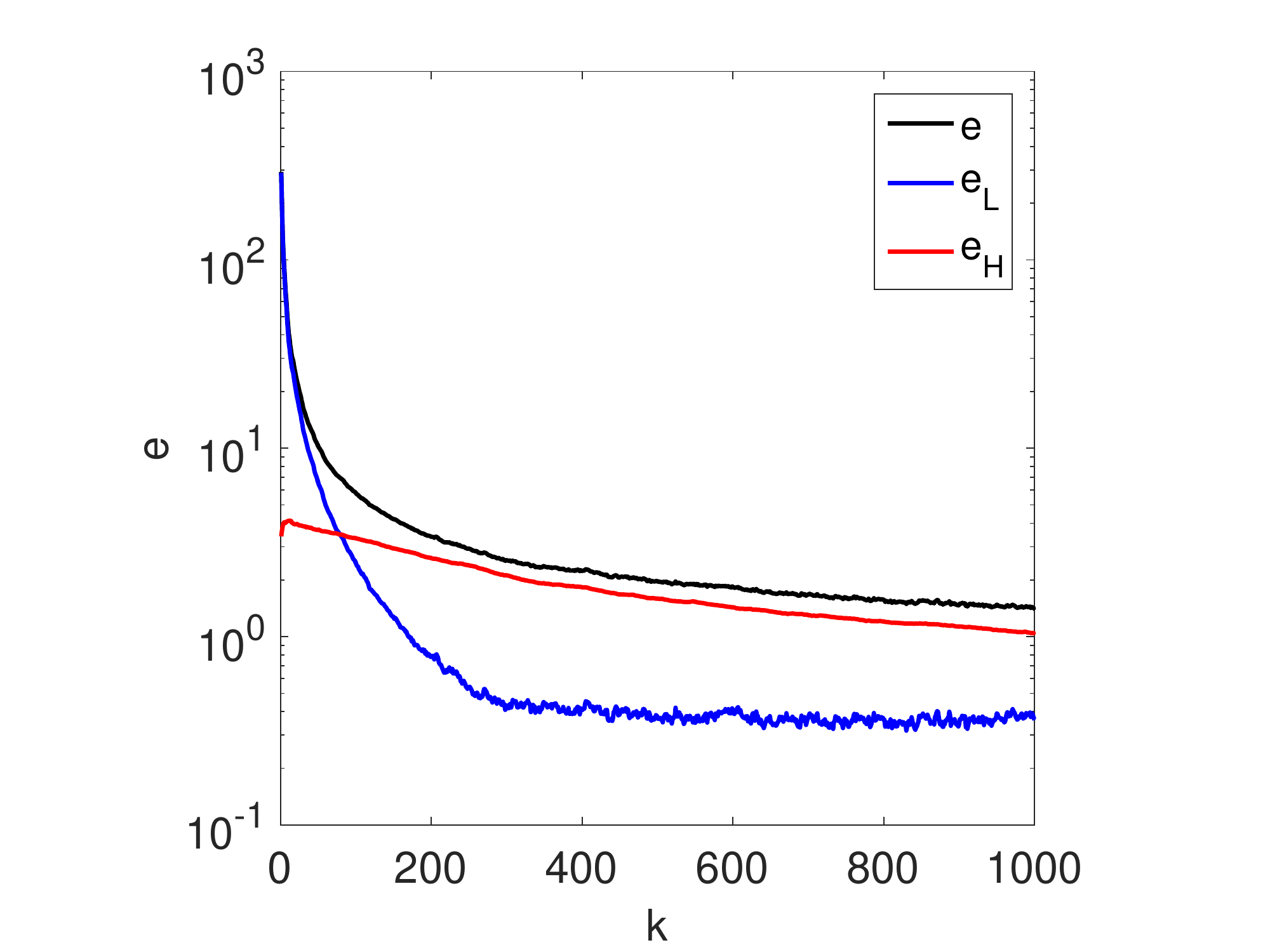}
   &\includegraphics[trim={2cm 0 2cm 0},clip,width=.33\textwidth]{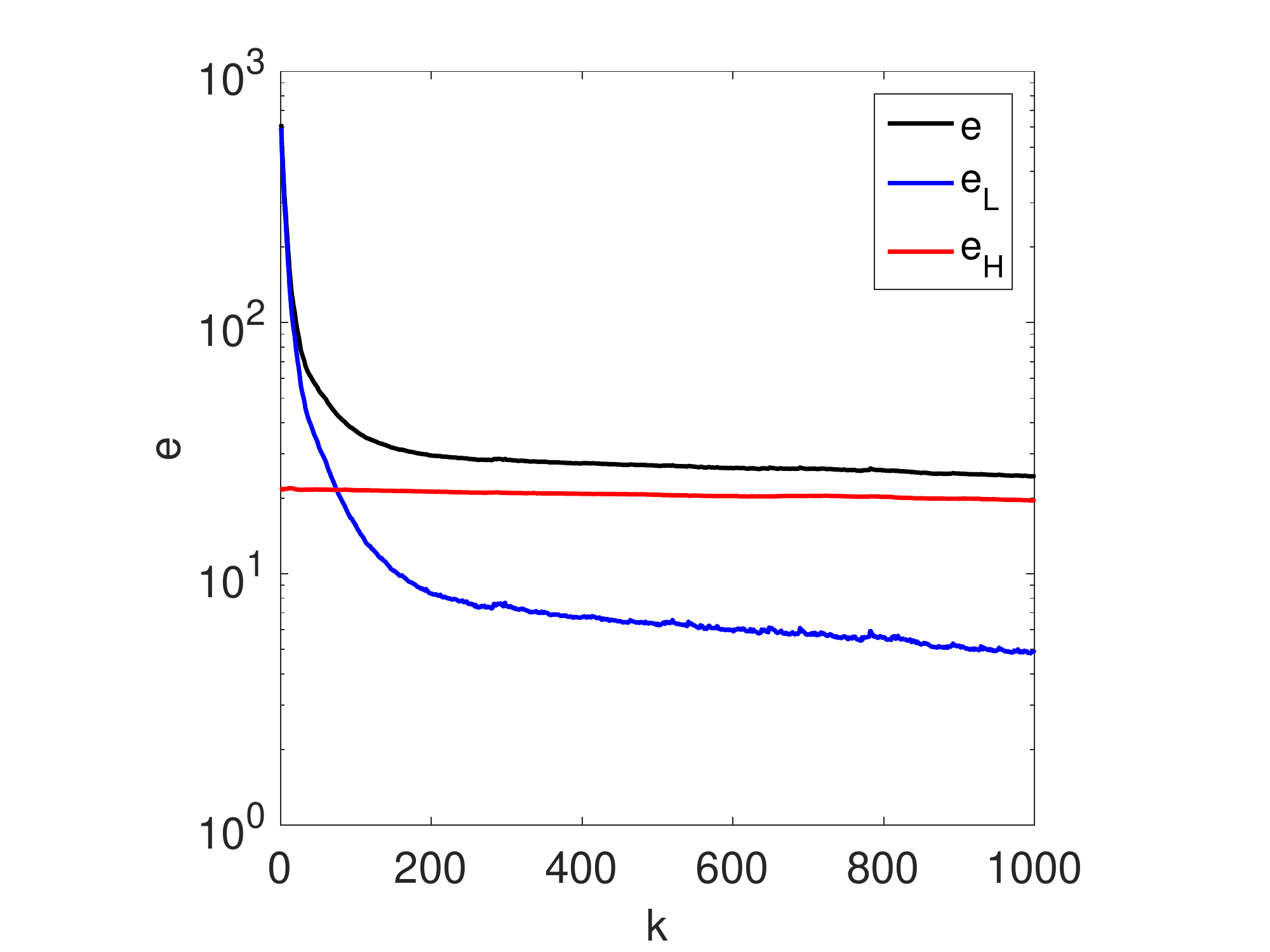}\\
   \includegraphics[trim={2cm 0 2cm 0},clip,width=.33\textwidth]{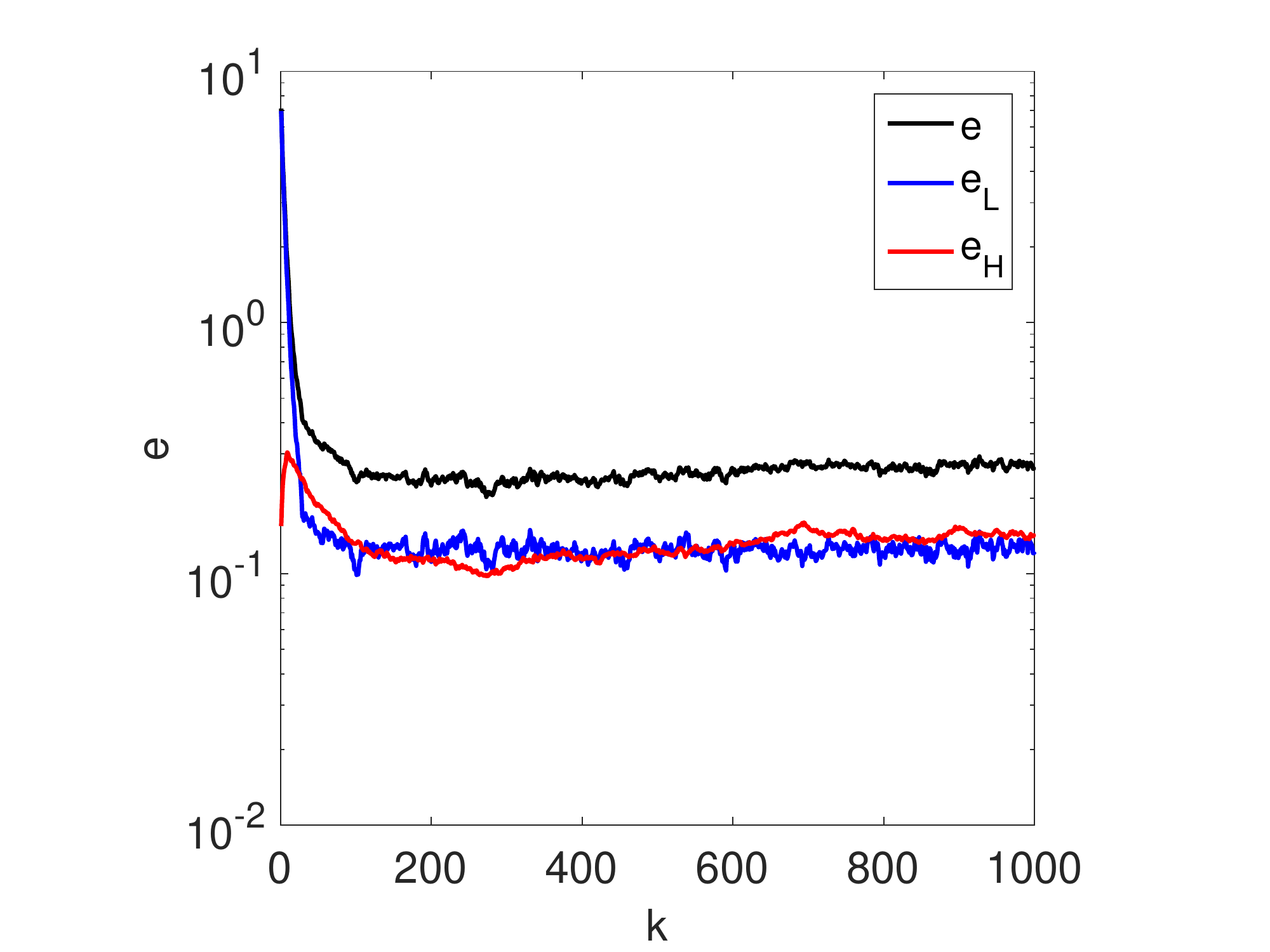} & \includegraphics[trim={2cm 0 2cm 0},clip,width=.33\textwidth]{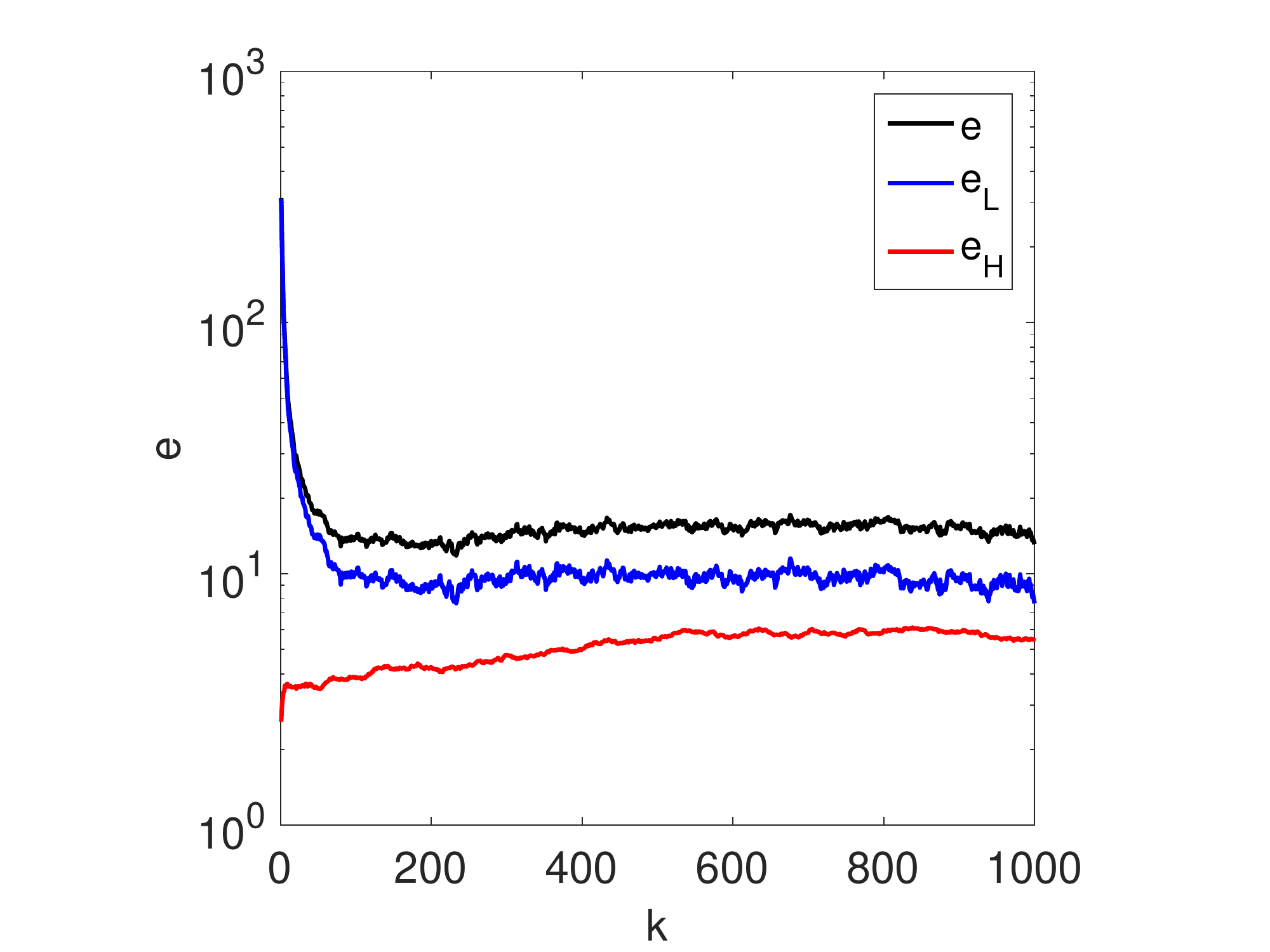}
   &\includegraphics[trim={2cm 0 2cm 0},clip,width=.33\textwidth]{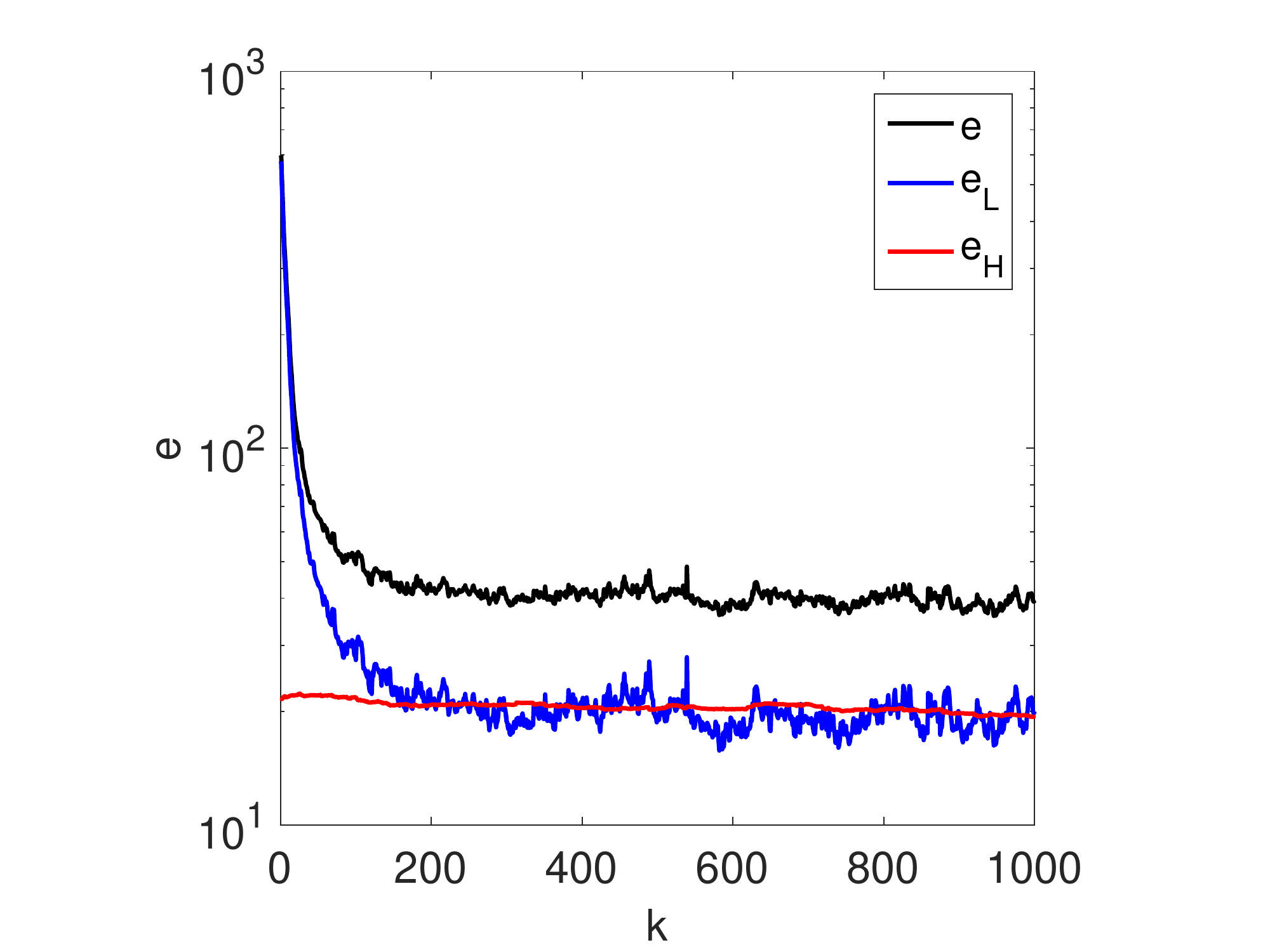}\\
      (a) \texttt{phillips} & (b) \texttt{gravity} & (c) \texttt{shaw}
  \end{tabular}
  \caption{The error decay for the examples with two noise levels: $\delta=10^{-2}$ (top) and $\delta = 5\times 10^{-2}$ (bottom), with a truncation level $L=5$.
  \label{fig:decom}}
\end{figure}

To shed further insights, we present in Fig. \ref{fig:decom-rnd} the decay behavior of the low- and high-frequency
errors for the example \texttt{phillips} with a random solution whose entries follow the i.i.d. standard normal
distribution. Then the source type condition is not verified for the initial error. Now with a truncation level
$L=5$, the low-frequency error $e_L$ only composes a small fractional of the initial error $e_0$. The low-frequency
error $e_L$ decays rapidly, exhibiting a fast preasymptotic convergence as predicted by Theorem \ref{thm:err-noise},
but the high-frequency error $e_H$ stagnates during the iteration. Thus, in the absence of the smoothness
condition on $e_0$, RKM is ineffective, thereby supporting Theorems \ref{thm:err-exact} and \ref{thm:err-noise}.

\begin{figure}[hbt!]
  \centering
  \setlength{\tabcolsep}{0pt}
  \begin{tabular}{cc}
    \includegraphics[trim={2cm 0 2cm 0},clip,width=.33\textwidth]{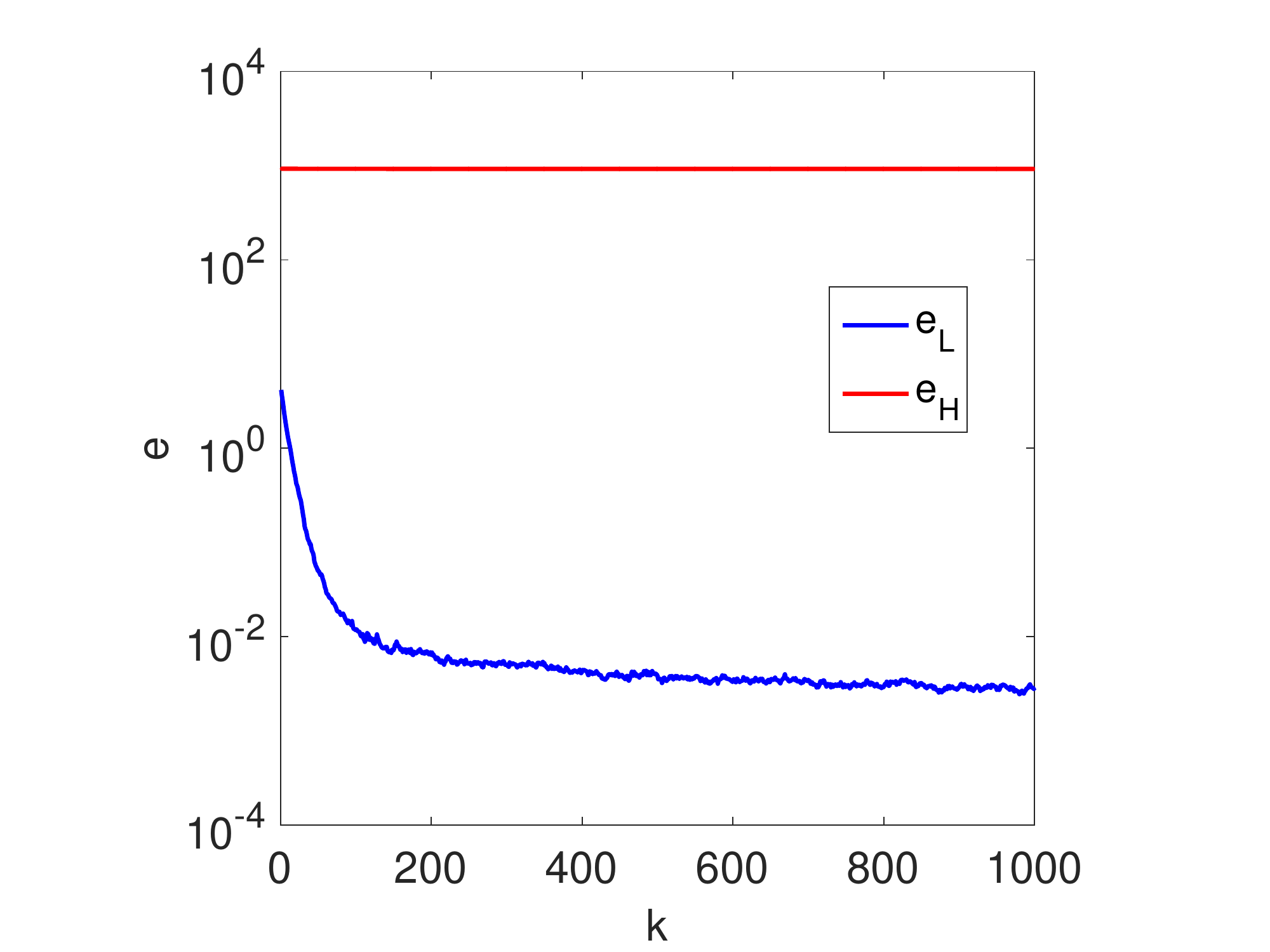} & \includegraphics[trim={2cm 0 2cm 0},clip,width=.33\textwidth]{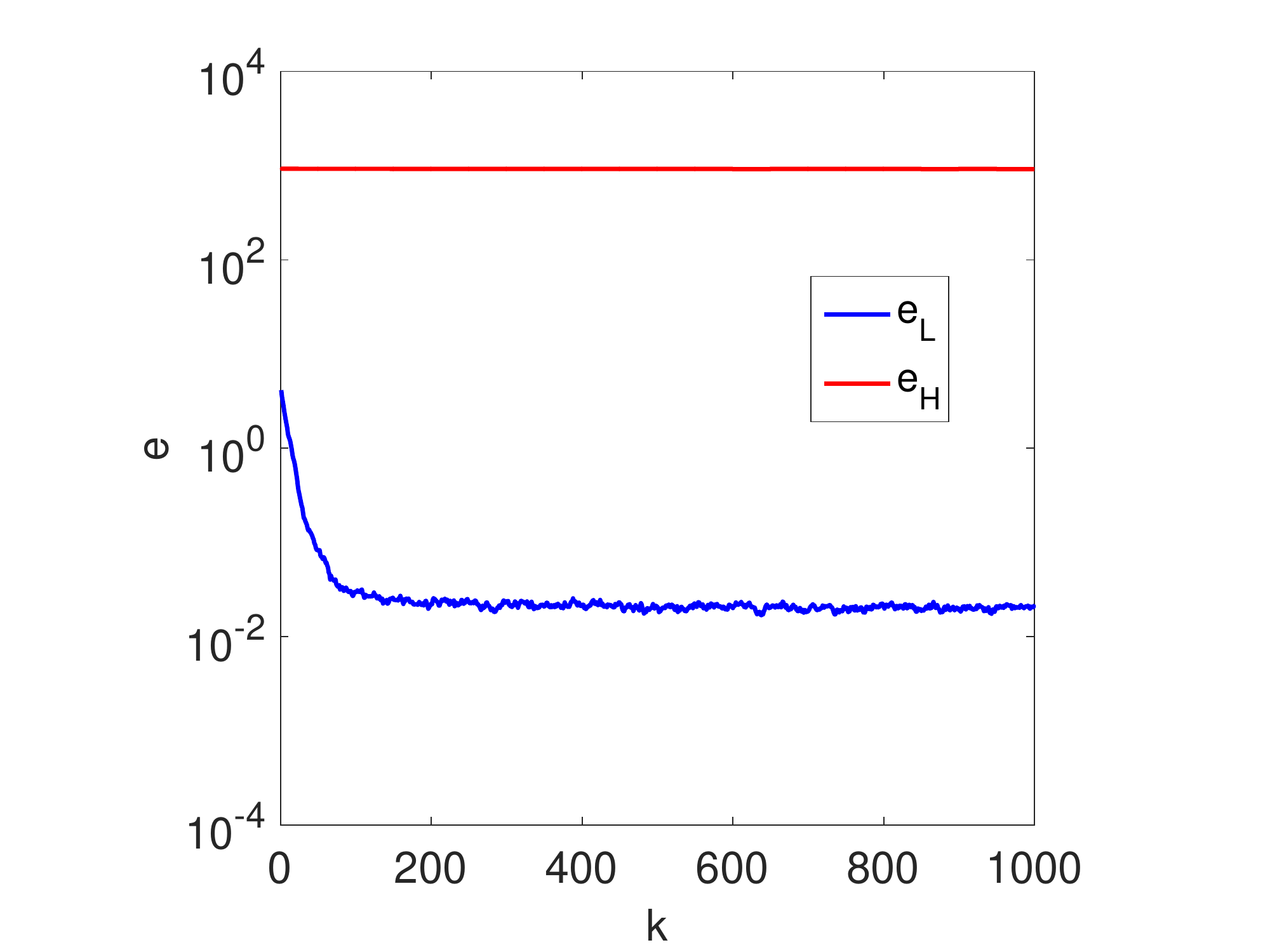}\\
     (a) $\delta=10^{-2}$ & (b) $\delta=5\times10^{-2}$
\end{tabular}
  \caption{The error decay for \texttt{phillips} with a random solution, with a truncation level $L=5$.
  \label{fig:decom-rnd}}
\end{figure}

Naturally, one may divide the total error $e$ into more than two frequency bands. The empirical behavior is
similar to the case of two frequency bands; see Fig. \ref{fig:phil-decom_multi} for an illustration on
the example \texttt{phillips}, with four frequency bands. The lowest-frequency error $e_1$ decreases fastest,
and then the next band $e_2$ slightly slower, etc. These observations clearly indicate that even though
RKM does not employ the full gradient, the iterates are still mainly concerned with the low-frequency modes
during the first iterations, like the Landweber method in the sense that the low-frequency modes  are much
easier to recover than the high-frequency ones. However, the cost of each RKM iteration is only one $n$th
of that for the Landweber method, and thus it is computationally much more efficient.

\begin{figure}[hbt!]
  \centering
  \setlength{\tabcolsep}{0pt}
  \begin{tabular}{cc}
   \includegraphics[trim={2cm 0 2cm 0},clip,width=.33\textwidth]{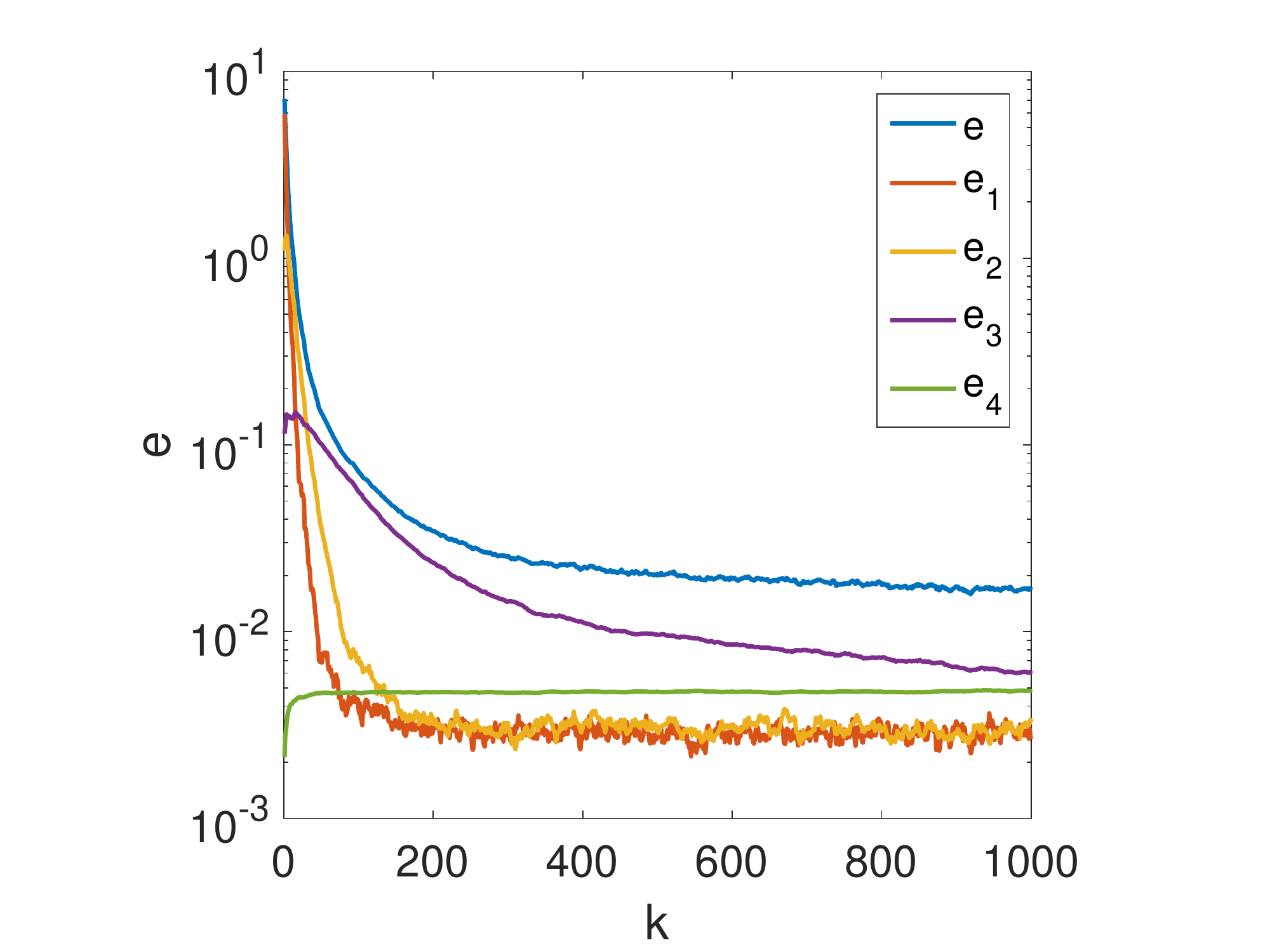} & \includegraphics[trim={2cm 0 2cm 0},clip,width=.33\textwidth]{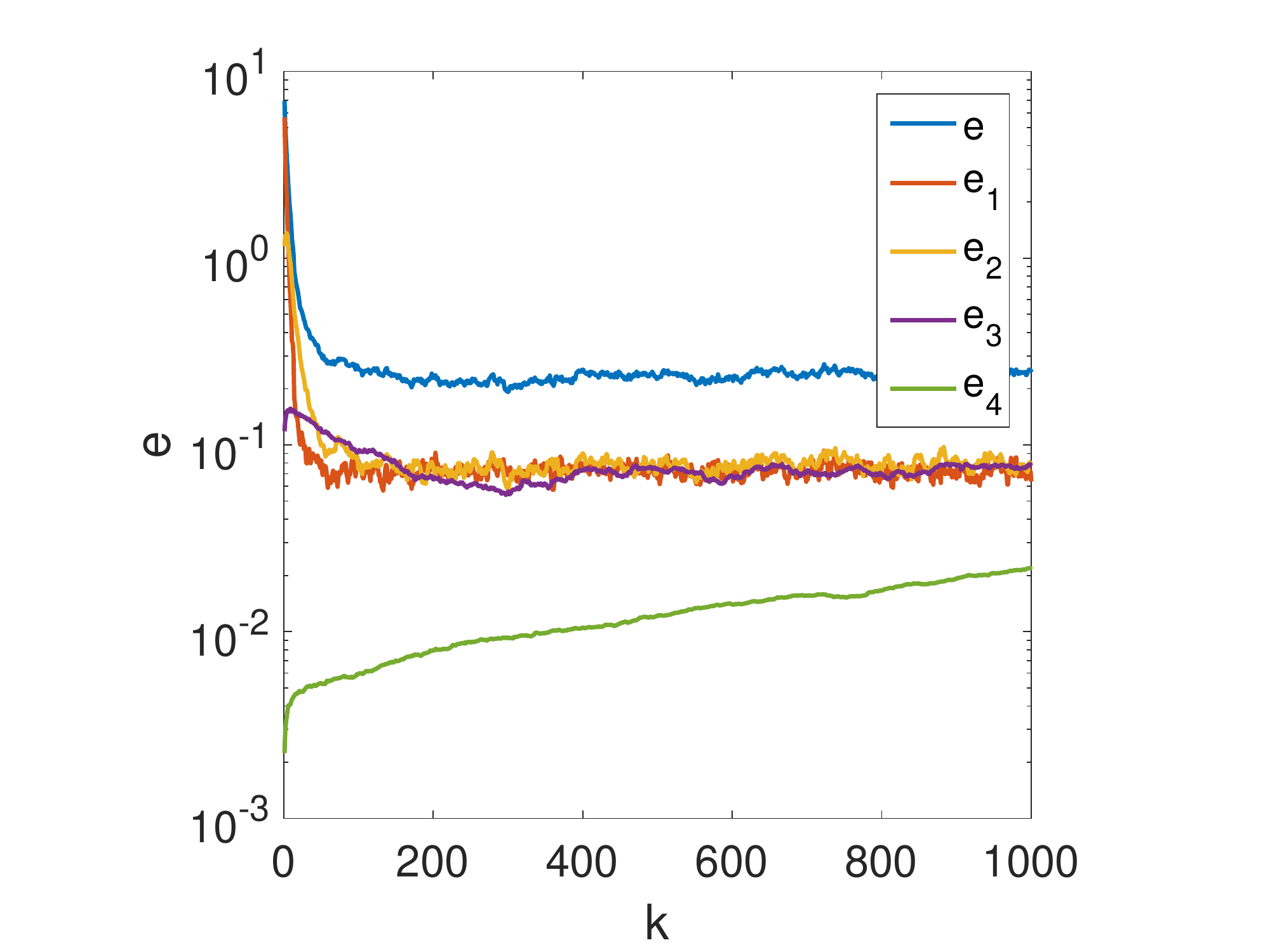}\\
   (a) $\delta=10^{-2}$ & (b) $\delta=5\times10^{-2}$
  \end{tabular}
  \caption{The error decay for \texttt{phillips}. The total error $e$ is
  divided into four frequency bands: 1-3, 4-6, 7-9, and the remaining,
  denoted by $e_i$, $i=1,\ldots,4$.  \label{fig:phil-decom_multi}}
\end{figure}

\subsection{RKM versus RKMVR}

The nonvanishing variance of the gradient $g_i(x)$ slows down the asymptotic convergence of RKM, and
the iterates eventually tend to oscillate wildly in the presence of data noise, cf. the discussion in
Section \ref{sec:implement}. This is expected: the iterate converges to the least-squares
solution, which is known to be highly oscillatory for ill-posed inverse problems. Variance reduction
is one natural strategy to decrease the variance of the gradient estimate, thereby stabilizing the
evolution of the iterates. To illustrate this, we compare the evolution of RKM with RKMVR in Fig.
\ref{fig:rkmvr}. We also include the results by the Landweber method (LM). To compare the iteration
complexity only, we count one Landweber iteration as $n$ RKM iterates. The epoch of RKMVR is set to
$n$, the total number of data points, as suggested in \cite{JohnsonZhang:2013}. Thus $n$ RKMVR
iterates include one full gradient evaluation, and it amounts to $2n$ RKM iterates. The full gradient
evaluation is indicated by flat segments in the plots.

With the increase of the noise level $\delta$, RKM first decreases the error $e_k$, and then increases
it, which is especially pronounced at $\delta = 5\times 10^{-2}$. This is well reflected by the large
oscillations of the iterates. RKMVR tends to stabilize the iteration greatly by removing the large
oscillations, and thus its asymptotical behavior resembles closely that of LM. That is, RKMVR inherits
the good stability of LM, while retaining the fast initial convergence of RKM. Thus, the stopping
criterion, though still needed, is less critical for the RKMVR, which is very beneficial from the
practical point of view. In summary, the simple variance reduction scheme in Algorithm \ref{alg:rkm-vr}
can combine the strengths of both worlds.

Last, we numerically examine the regularizing property of RKMVR with the discrepancy principle \eqref{eqn:dp}.
In Fig. \ref{fig:rkmvrdp}, we present the number of iterations for several noise levels for RKMVR (one
realization) and LM. For both methods, the number of iterations by the discrepancy principle \eqref{eqn:dp}
appears to decrease with the noise level $\delta$, and RKMVR consistently terminates much earlier than LM,
indicating the efficiency of RKMVR. The reconstructions in Fig. \ref{fig:rkmvrdp}(d) show that the error
increases with the noise level $\delta$, indicating a regularizing property. In contrast, in the absence
of the discrepancy principle, the RKMVR iterates eventually diverge as the iteration proceeds, cf. Fig. \ref{fig:rkmvr}.

\begin{figure}[hbt!]
  \centering
  \setlength{\tabcolsep}{0pt}
  \begin{tabular}{cc}
   \includegraphics[trim={2cm 0 2cm 0},clip,width=.25\textwidth]{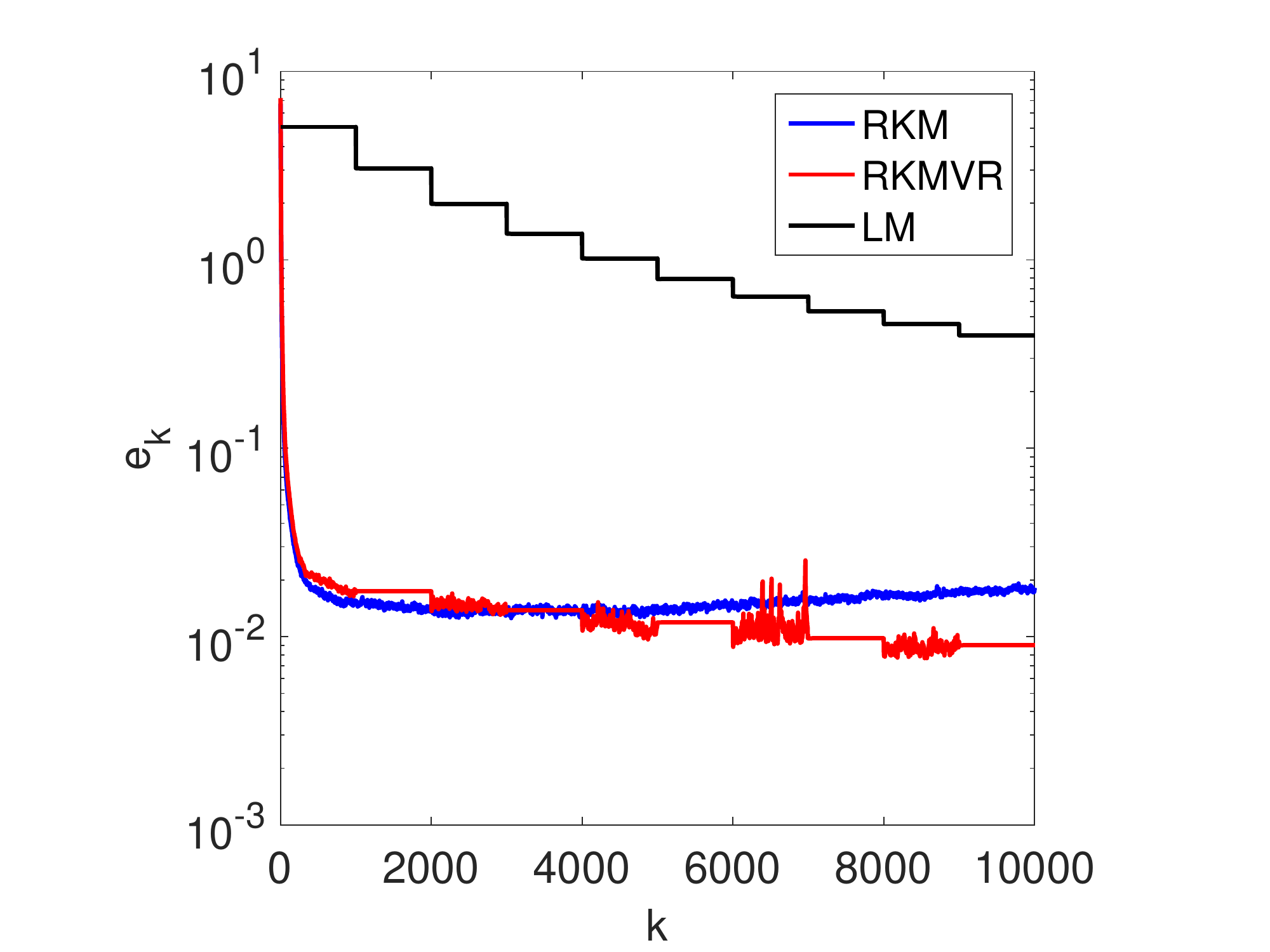} \includegraphics[trim={2cm 0 2cm 0},clip,width=.25\textwidth]{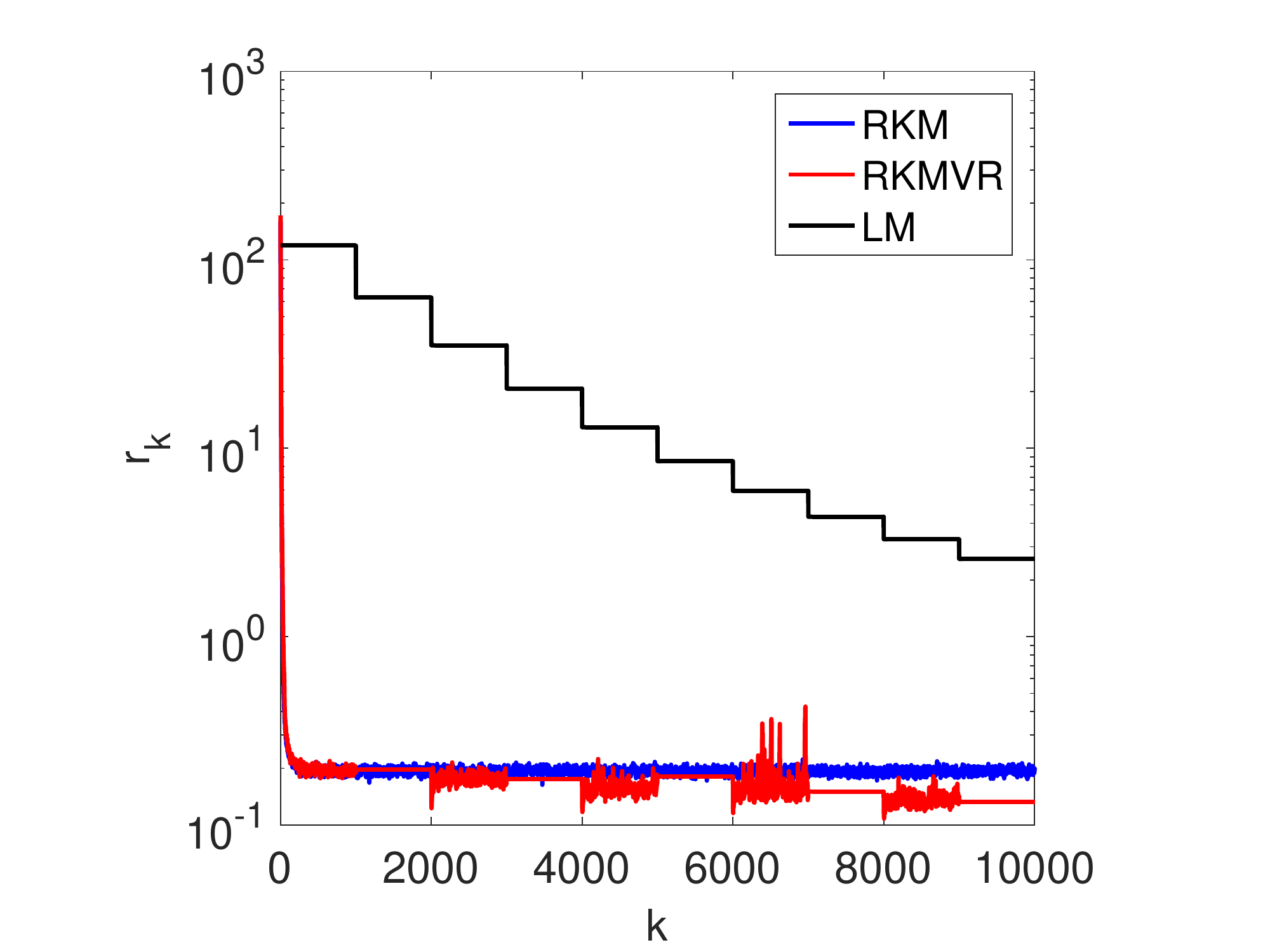}& \includegraphics[trim={2cm 0 2cm 0},clip,width=.25\textwidth]{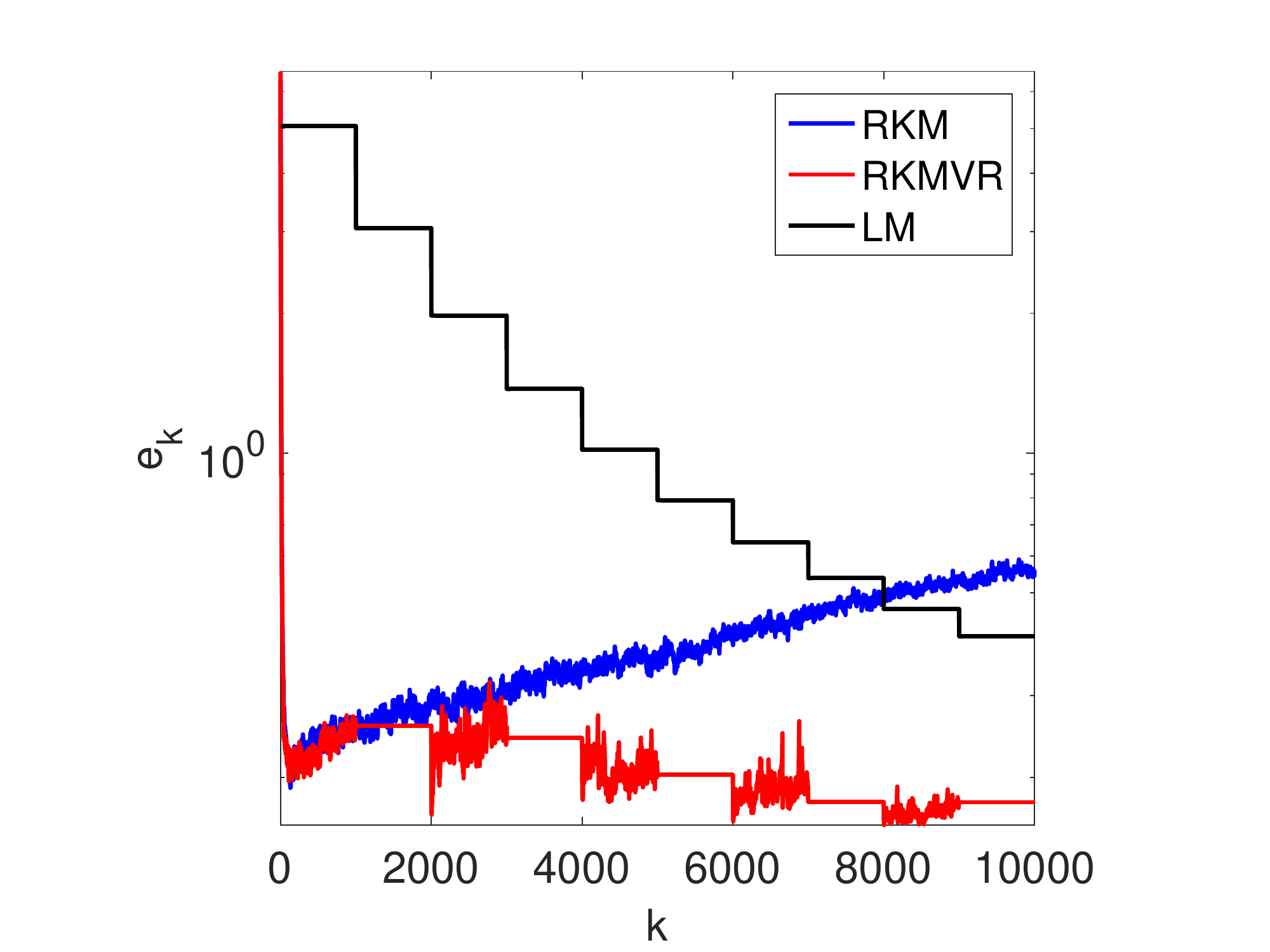} \includegraphics[trim={2cm 0 2cm 0},clip,width=.25\textwidth]{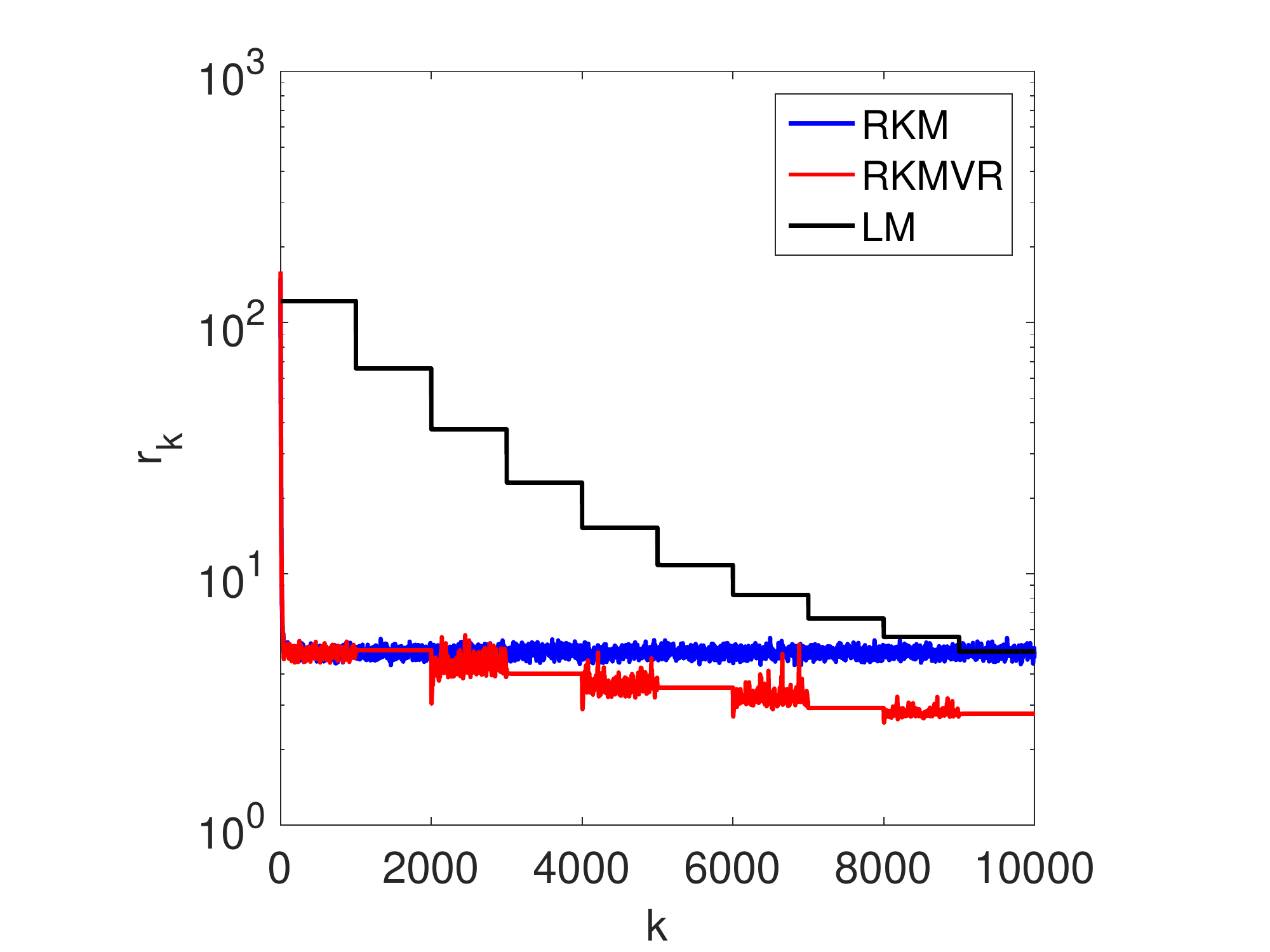}\\
   (a) \texttt{phillips}, $\delta=10^{-2}$ & (b) \texttt{phillips}, $\delta=5\times10^{-2}$ \\
   \includegraphics[trim={2cm 0 2cm 0},clip,width=.25\textwidth]{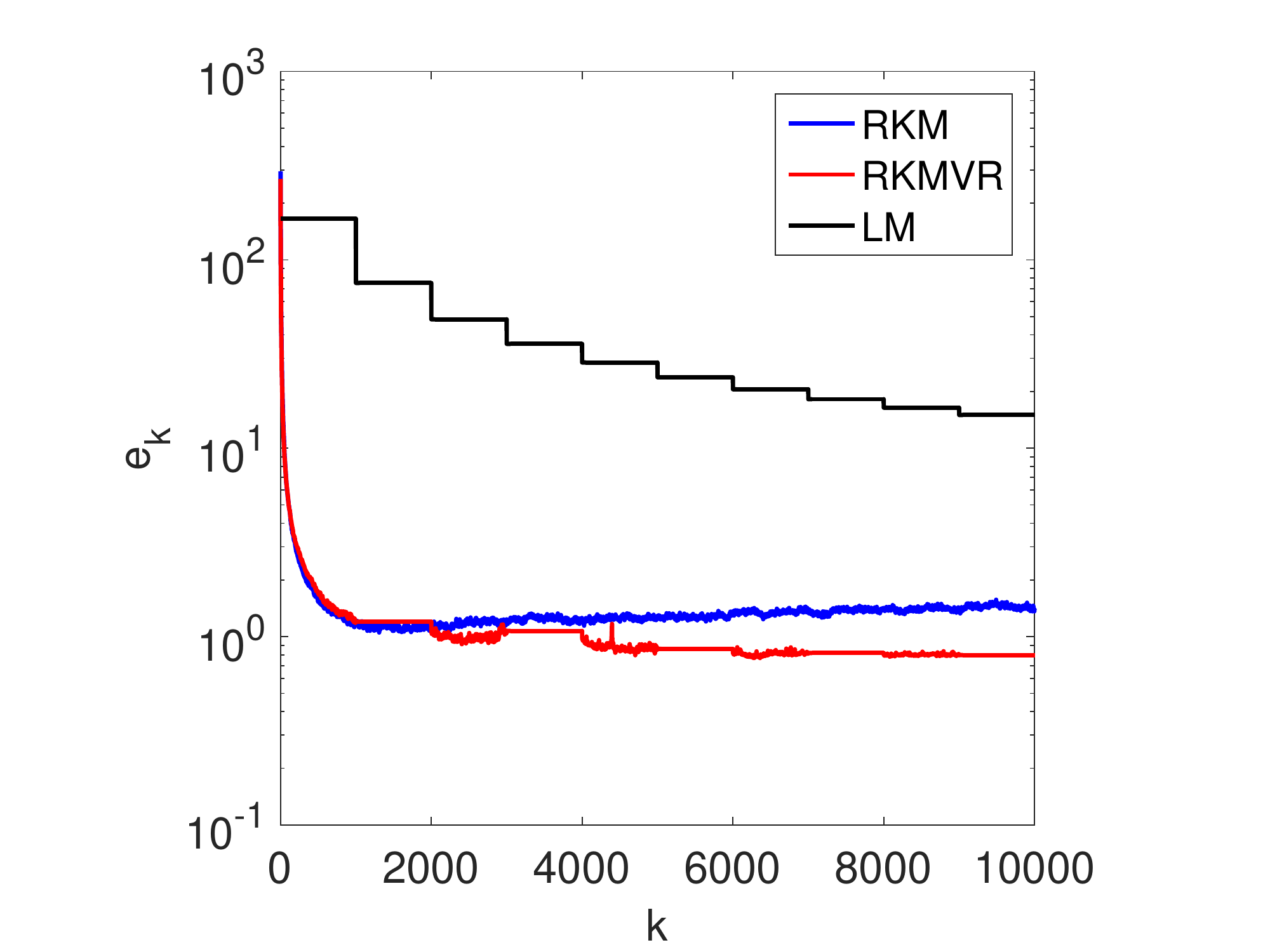} \includegraphics[trim={2cm 0 2cm 0},clip,width=.25\textwidth]{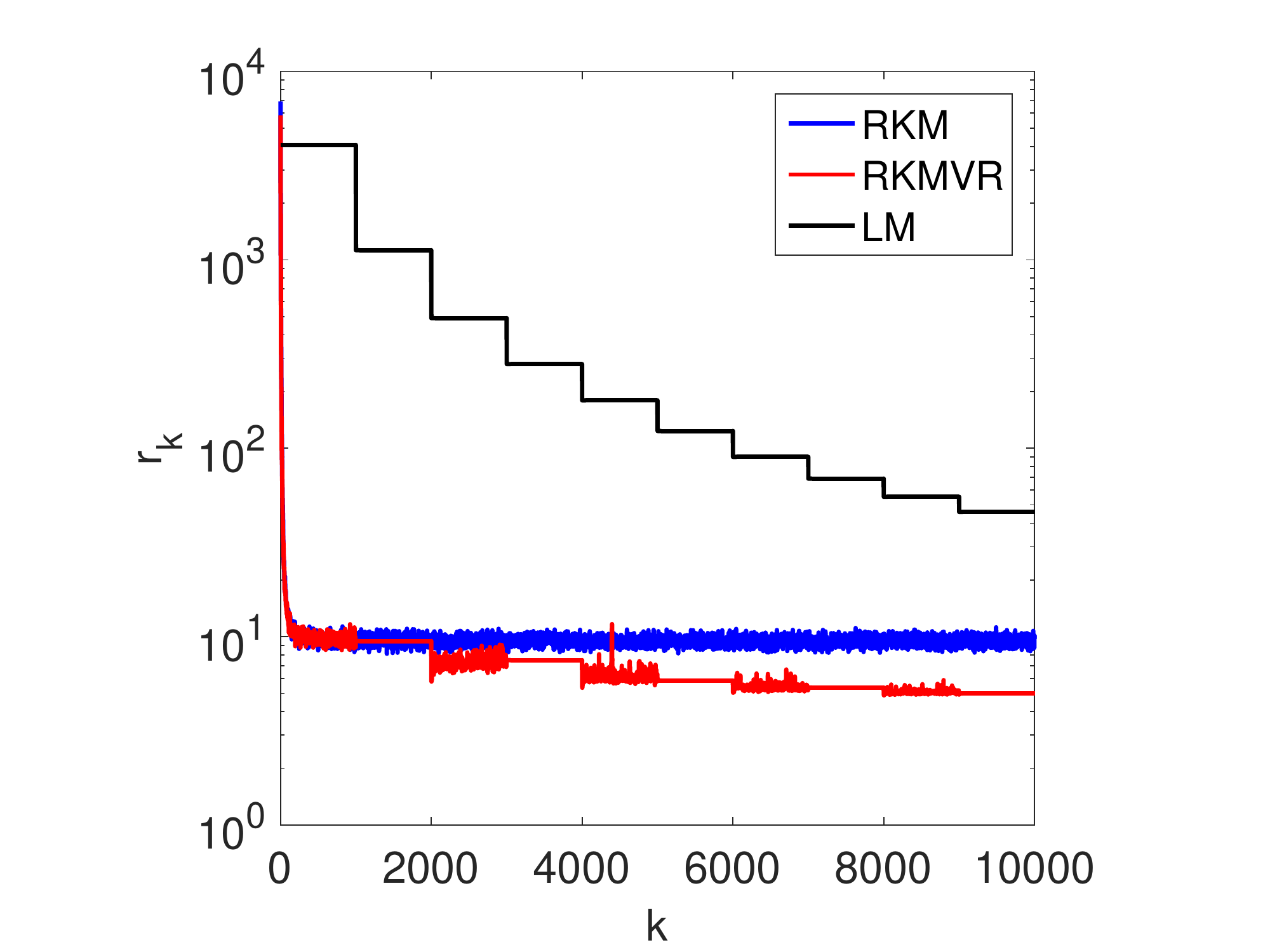}& \includegraphics[trim={2cm 0 2cm 0},clip,width=.25\textwidth]{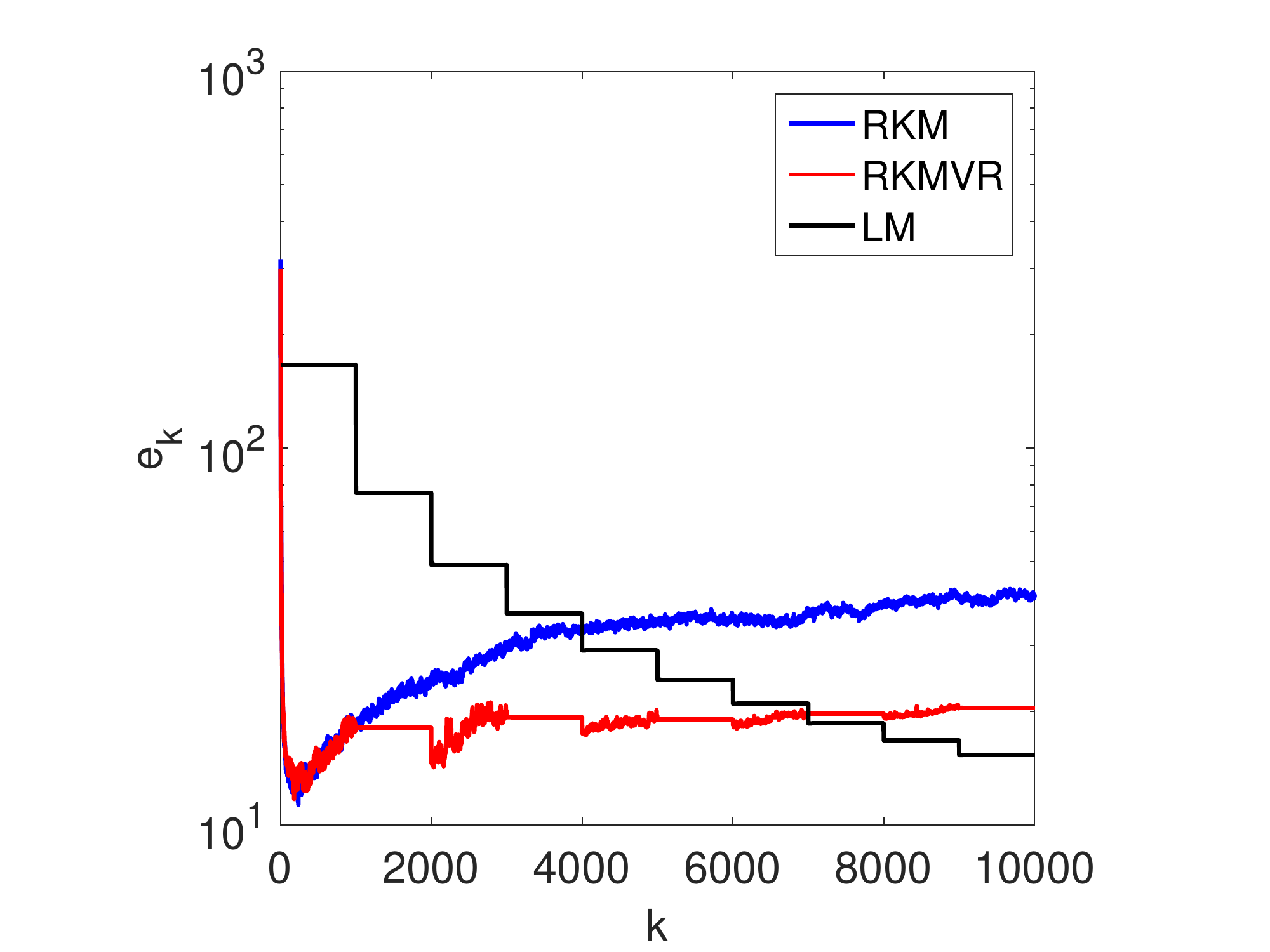} \includegraphics[trim={2cm 0 2cm 0},clip,width=.25\textwidth]{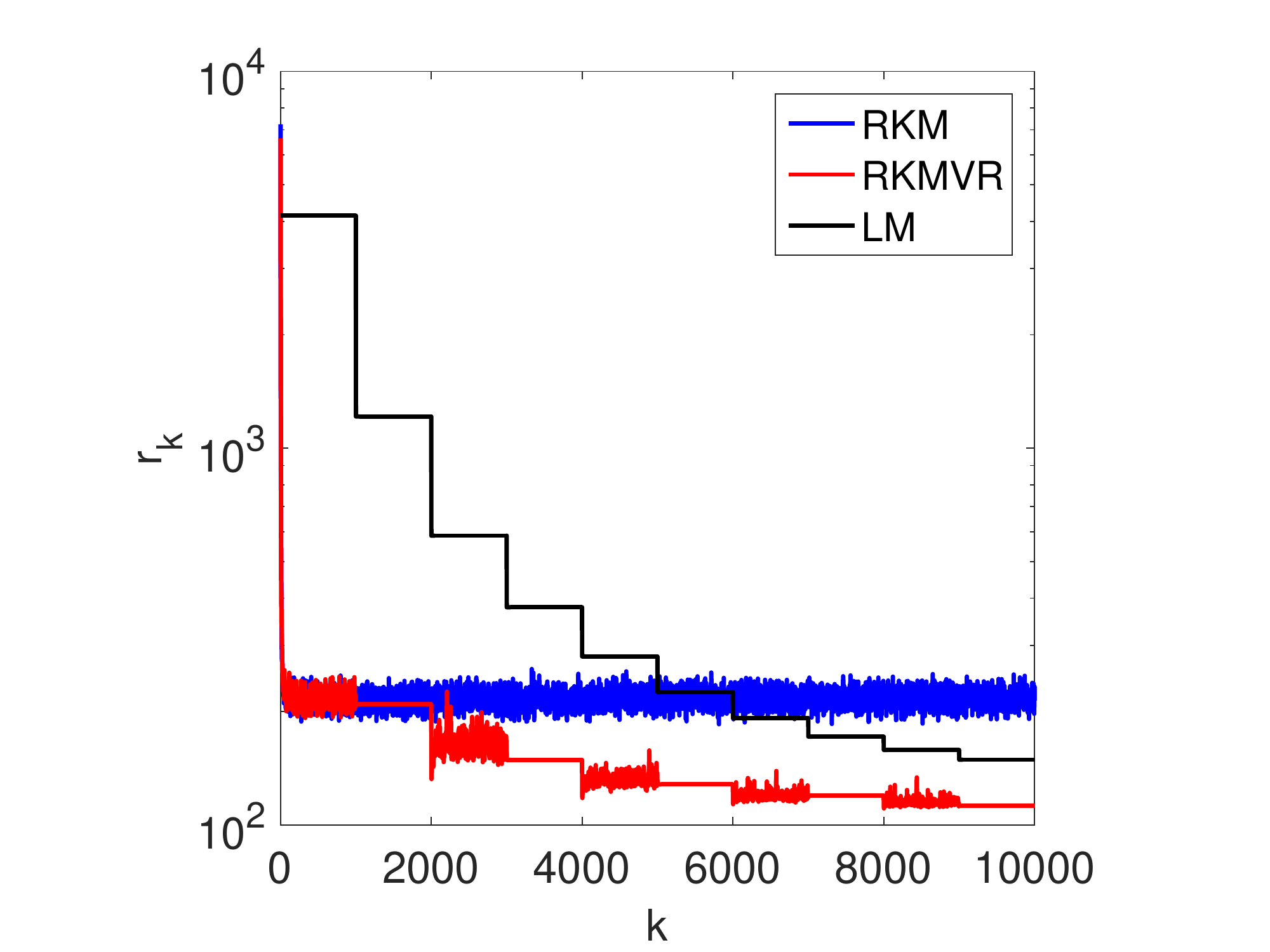}\\
   (c) \texttt{gravity}, $\delta=10^{-2}$ & (d) \texttt{gravity}, $\delta=5\times10^{-2}$  \\
   \includegraphics[trim={2cm 0 2cm 0},clip,width=.25\textwidth]{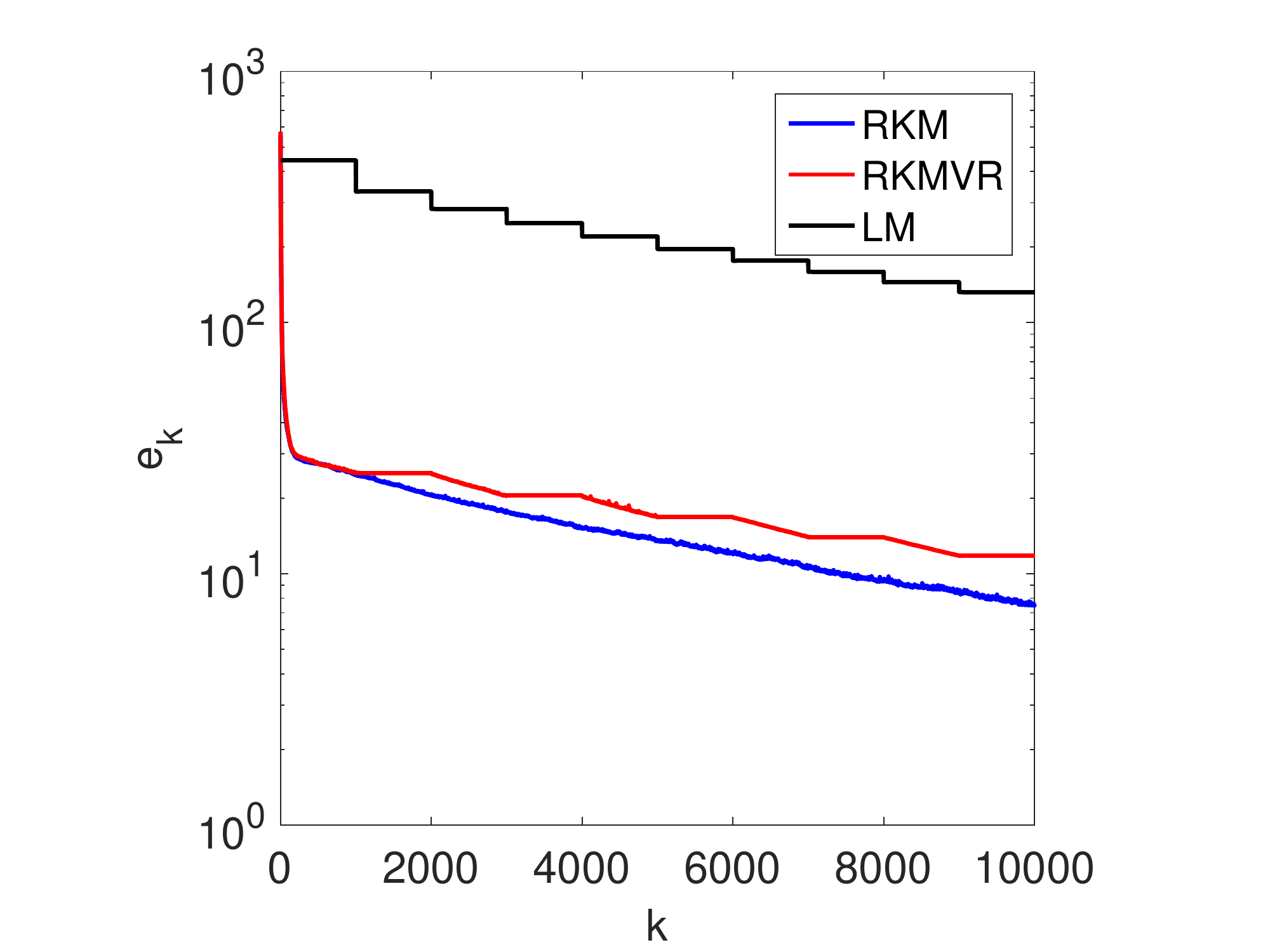} \includegraphics[trim={2cm 0 2cm 0},clip,width=.25\textwidth]{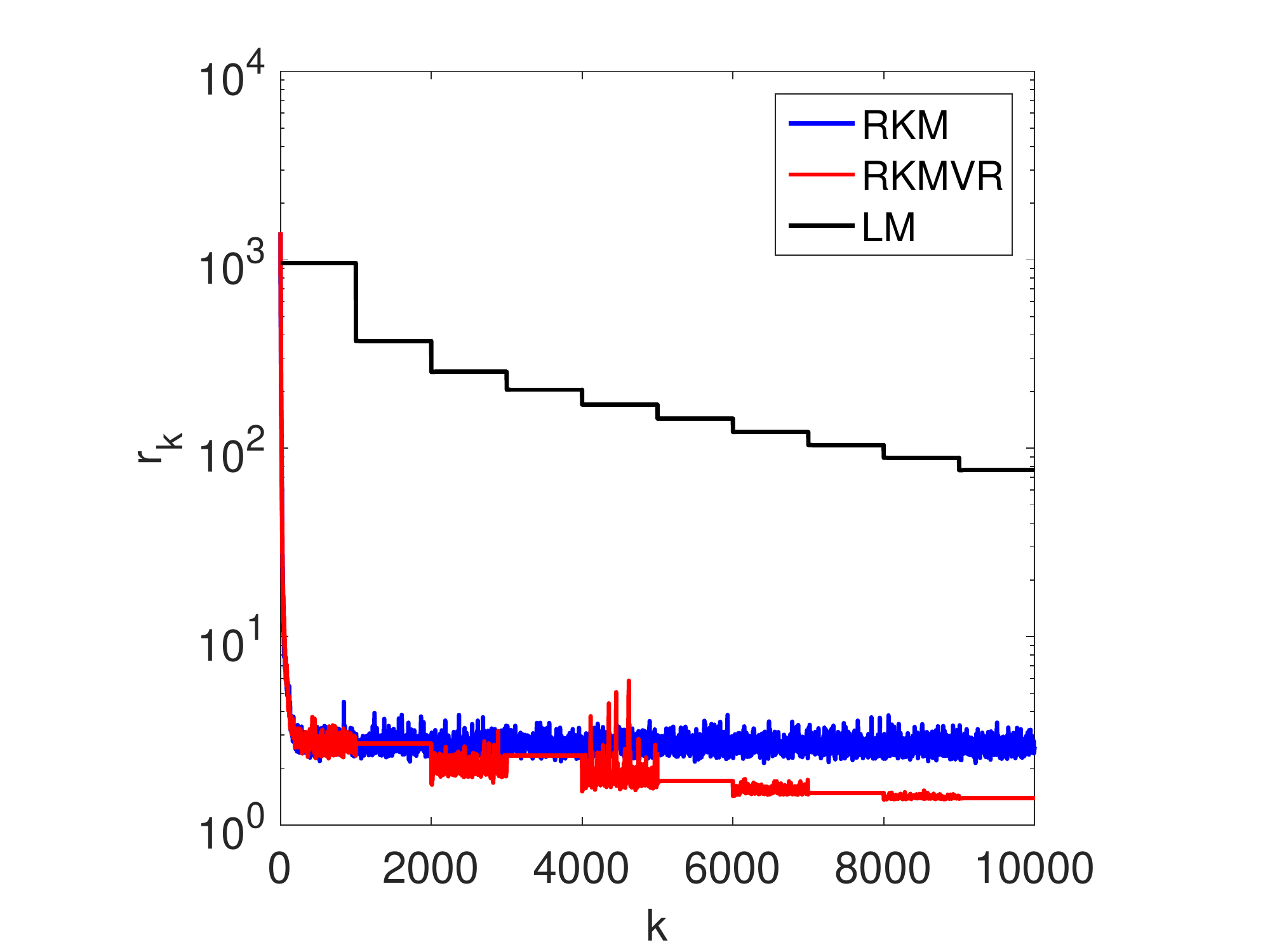} & \includegraphics[trim={2cm 0 2cm 0},clip,width=.25\textwidth]{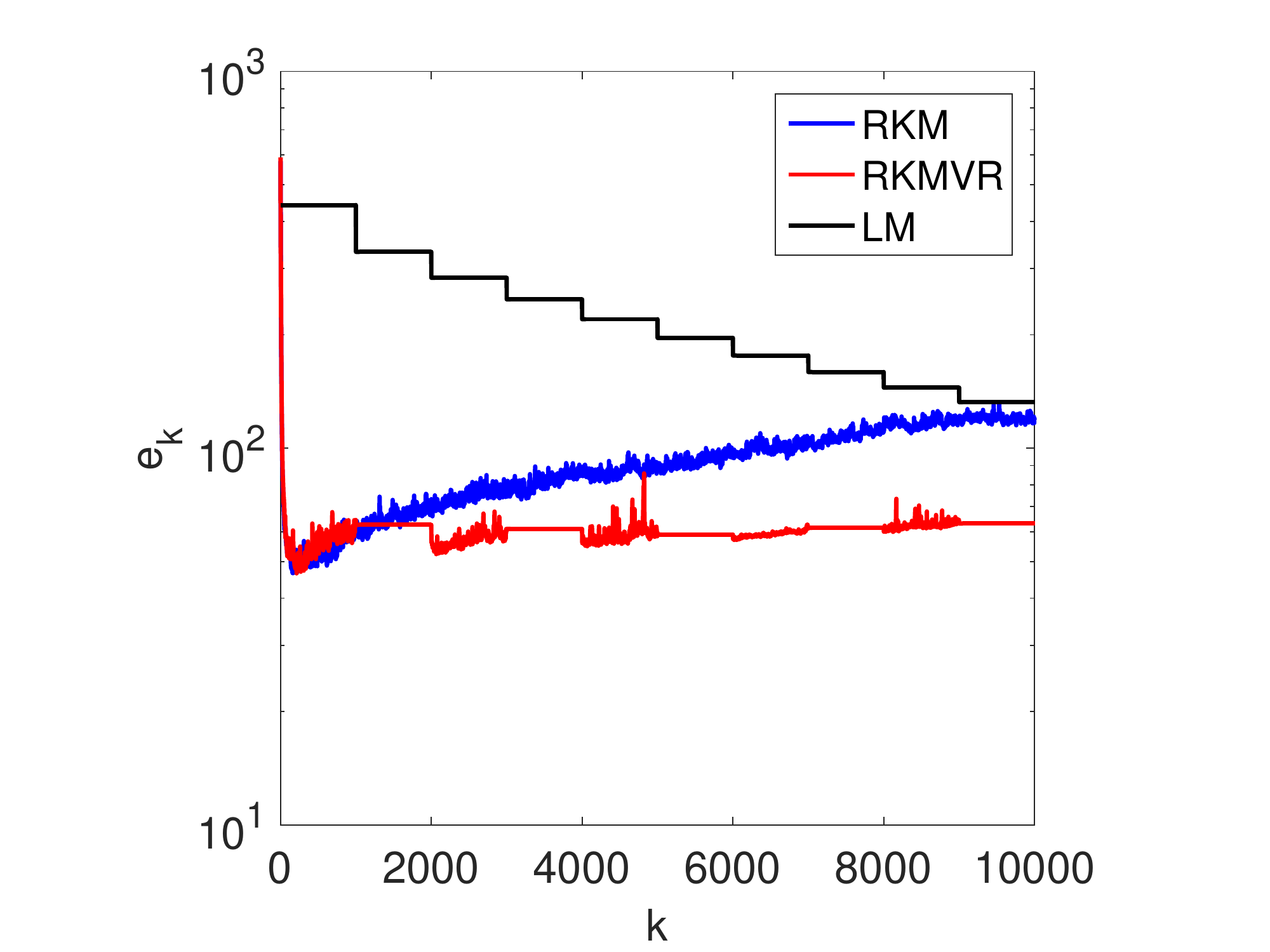} \includegraphics[trim={2cm 0 2cm 0},clip,width=.25\textwidth]{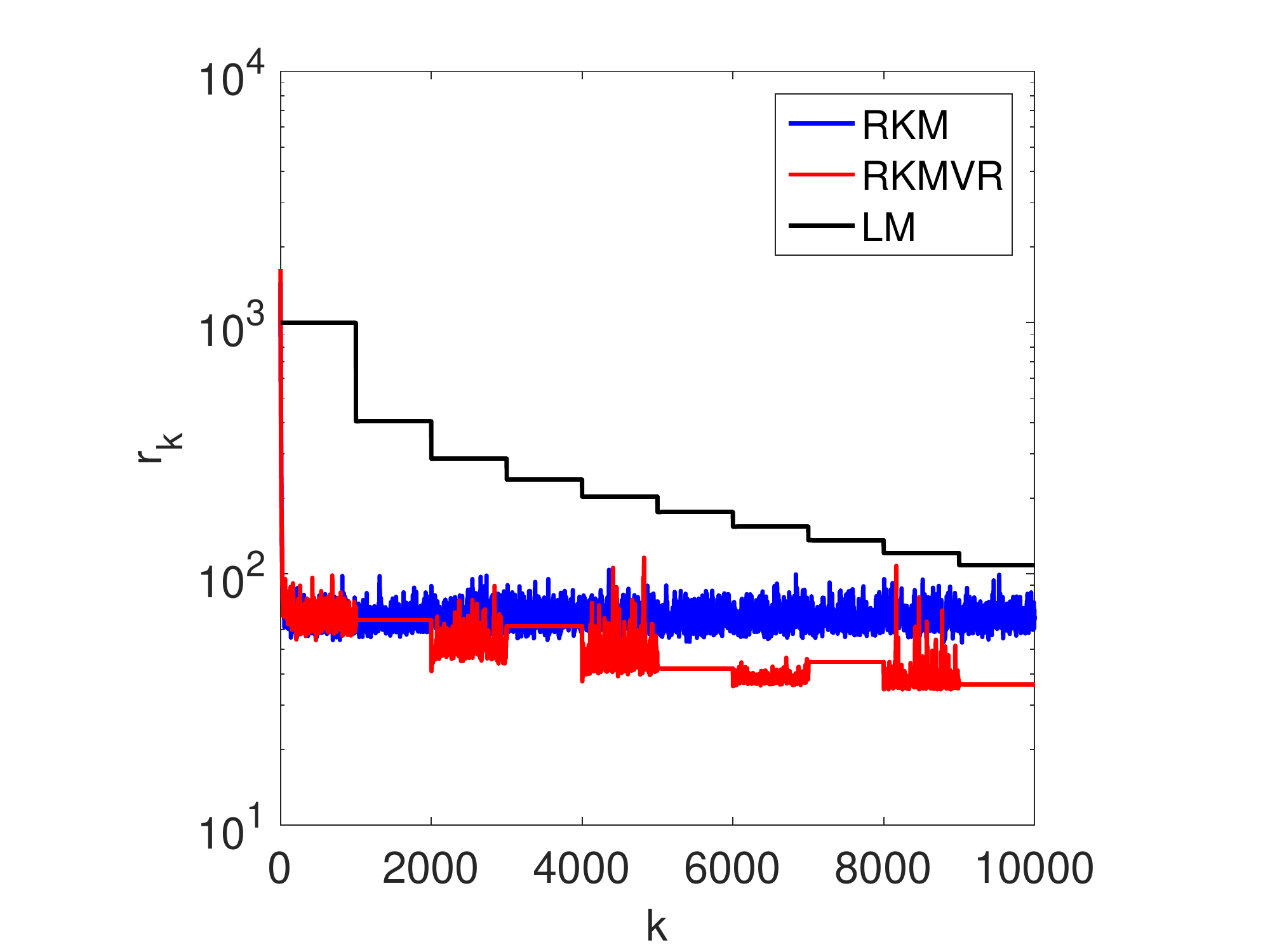}\\
   (e) \texttt{shaw}, $\delta=10^{-2}$ & (f) \texttt{shaw}, $\delta=5\times10^{-2}$
  \end{tabular}
  \caption{Numerical results for the examples by RKM, RKMVR and LM.\label{fig:rkmvr}}
\end{figure}

\begin{figure}[hbt!]
  \centering
  \setlength{\tabcolsep}{0pt}
  \begin{tabular}{cccc}
   \includegraphics[trim={2cm 0 2cm 0},clip,width=.25\textwidth]{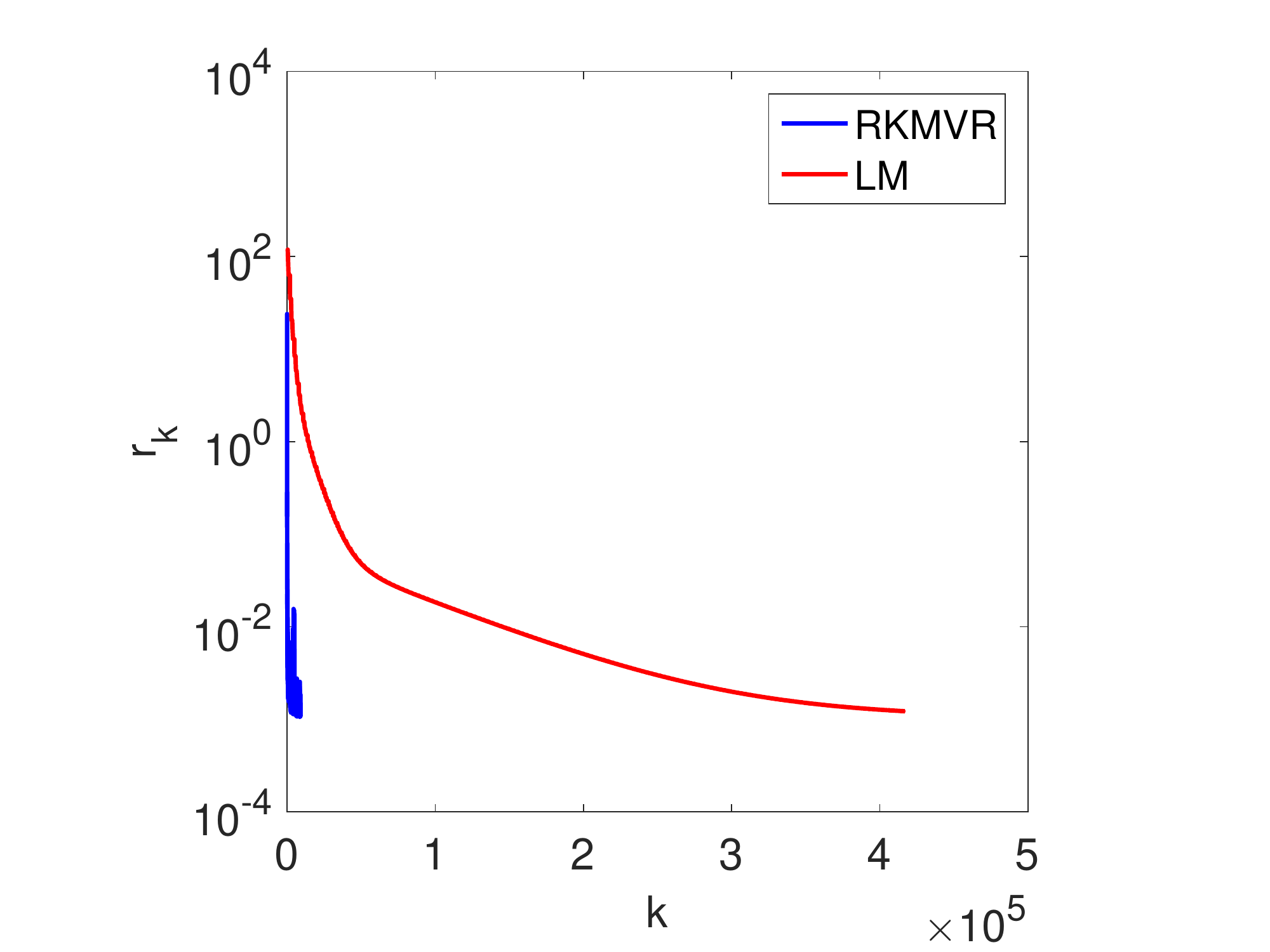} & \includegraphics[trim={2cm 0 2cm 0},clip,width=.25\textwidth]{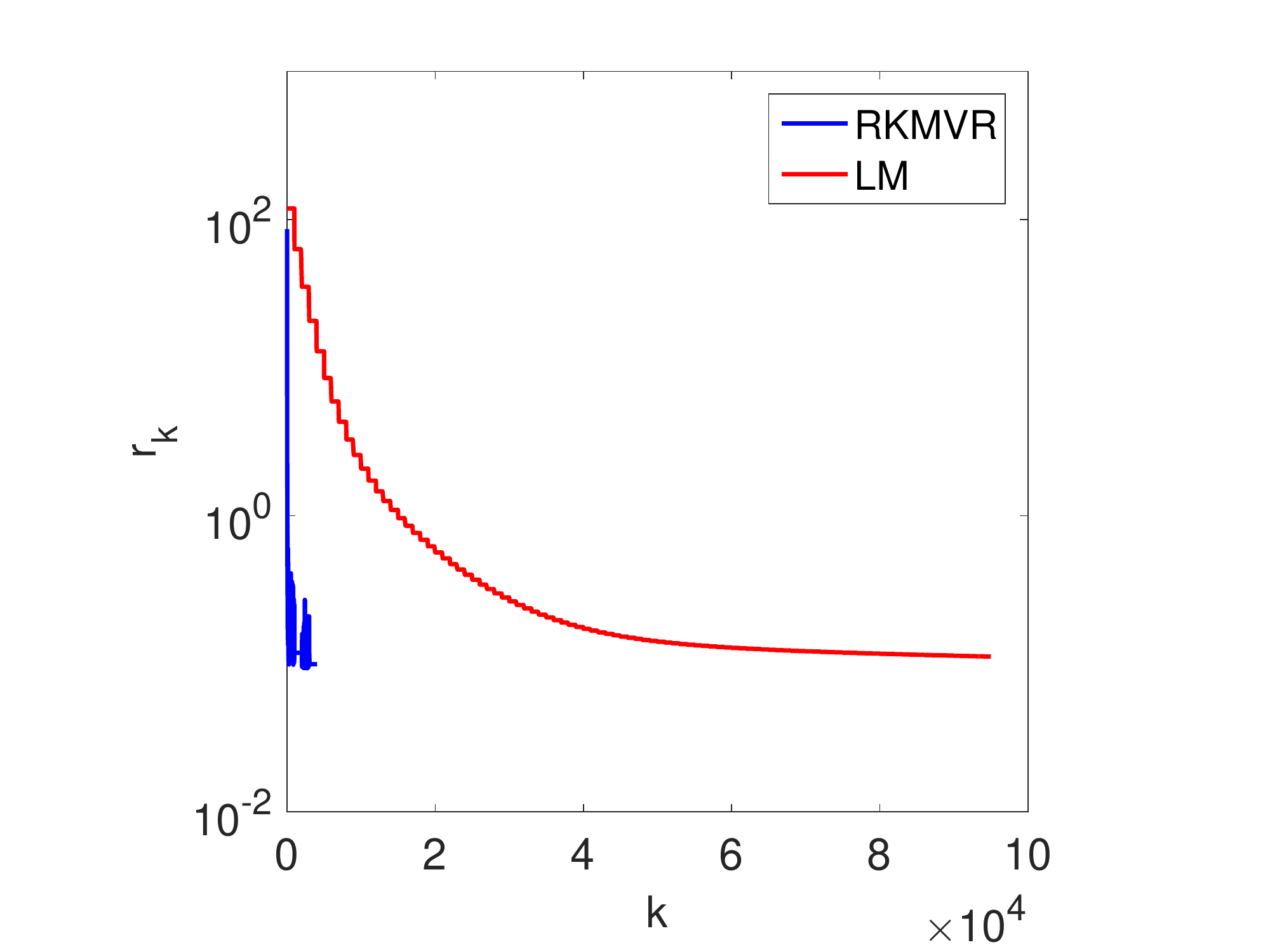}& \includegraphics[trim={2cm 0 2cm 0},clip,width=.25\textwidth]{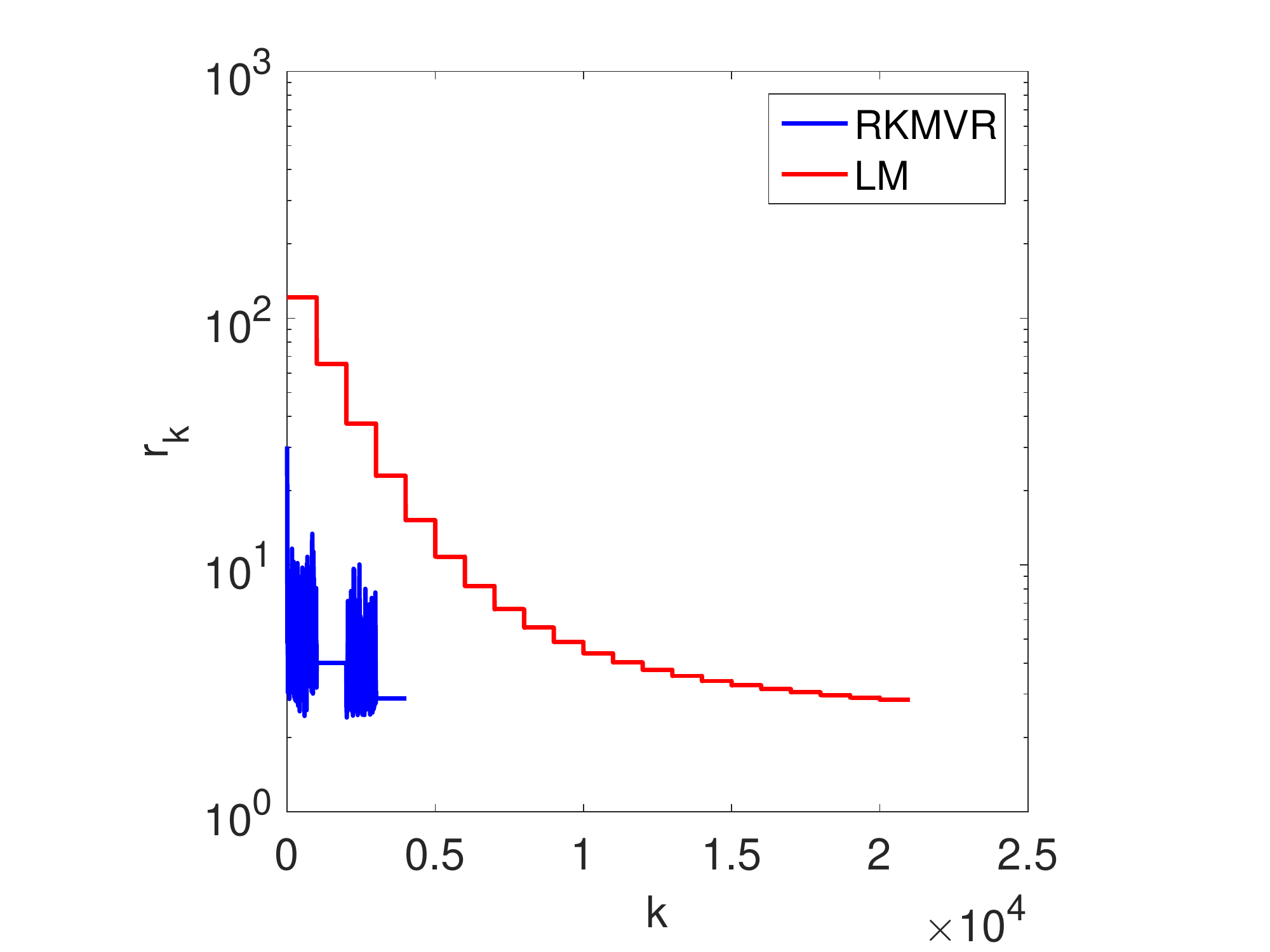} & \includegraphics[trim={2cm 0 2cm 0},clip,width=.25\textwidth]{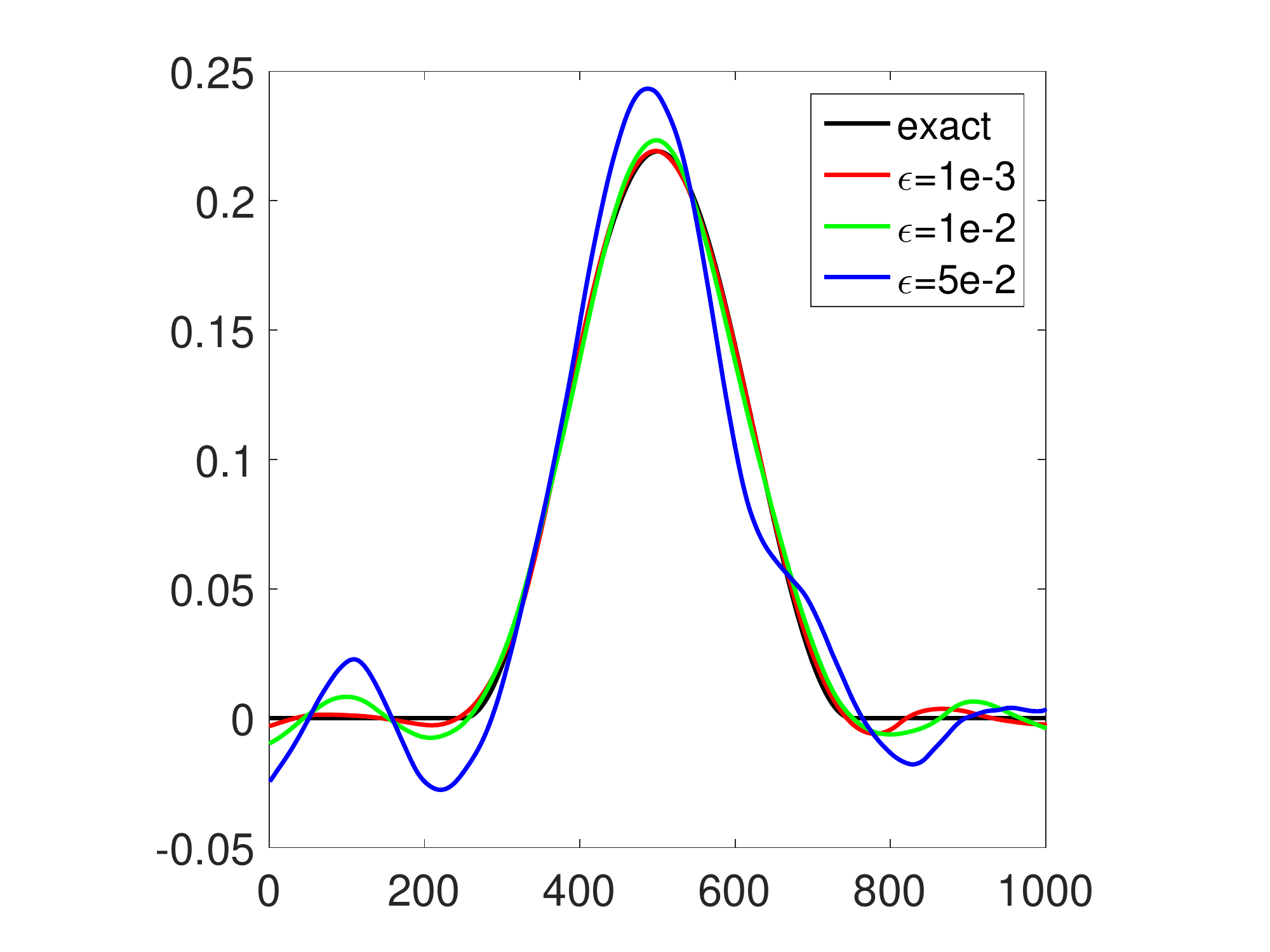}\\
   \includegraphics[trim={2cm 0 2cm 0},clip,width=.25\textwidth]{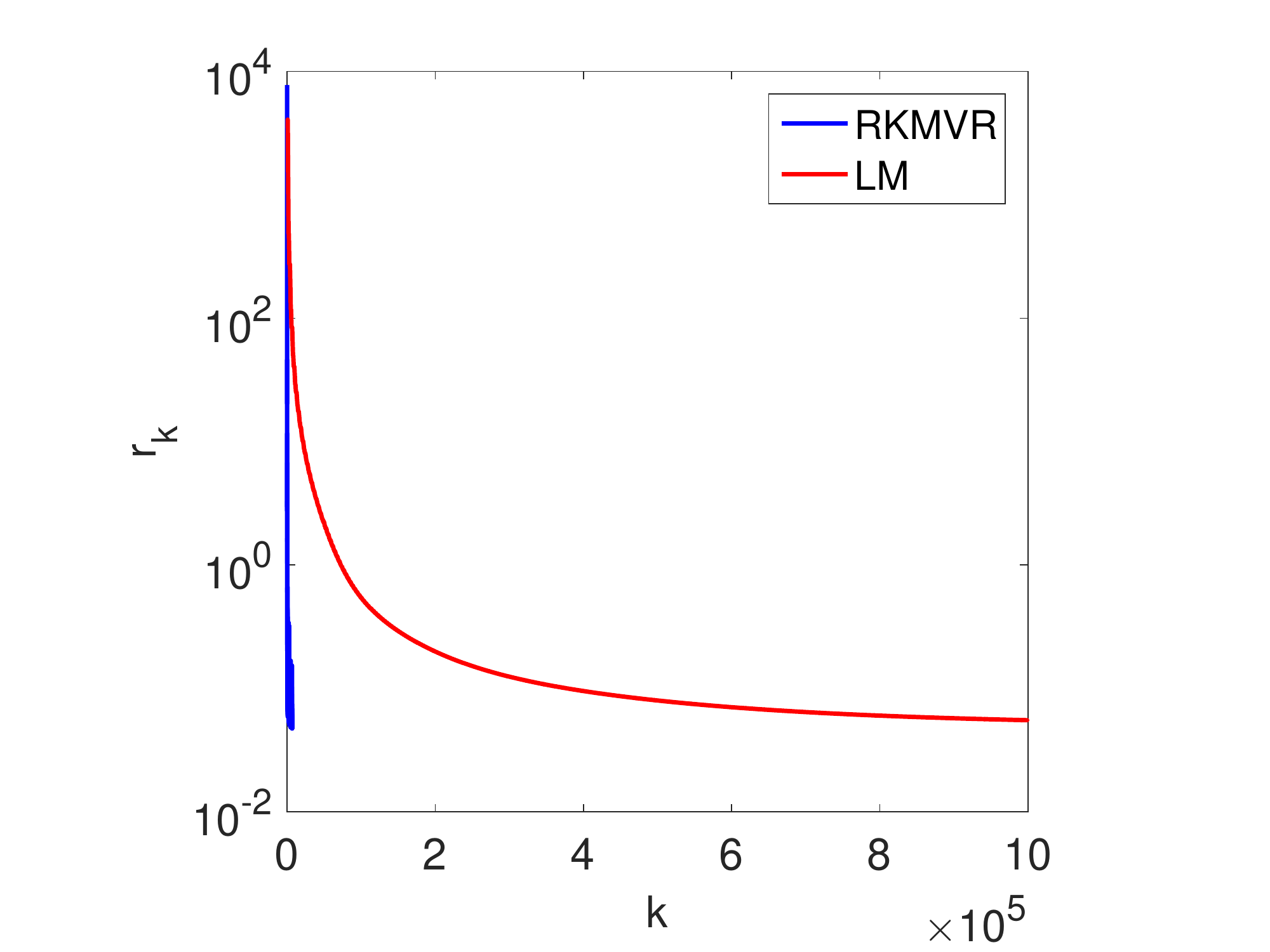} & \includegraphics[trim={2cm 0 2cm 0},clip,width=.25\textwidth]{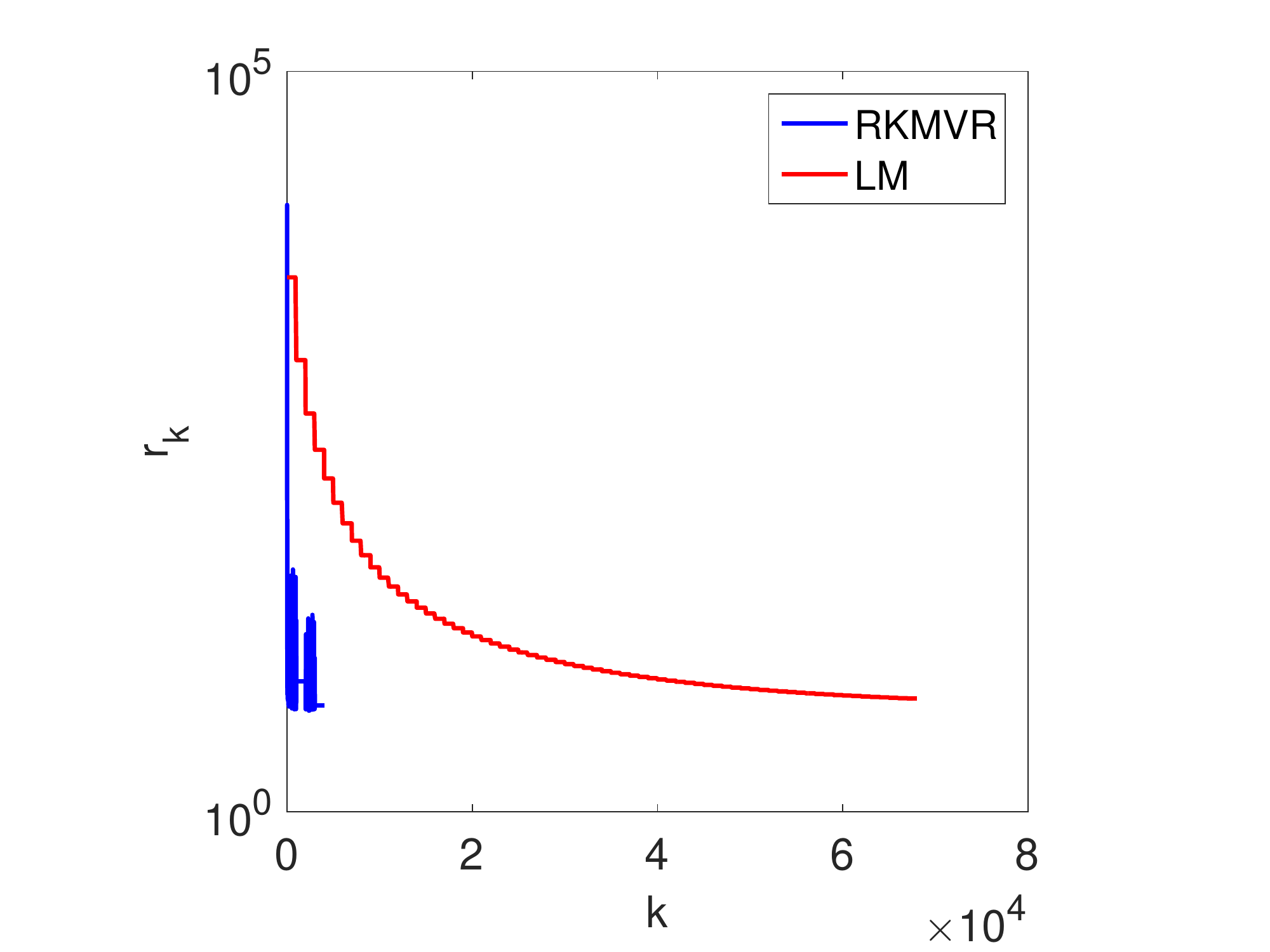}& \includegraphics[trim={2cm 0 2cm 0},clip,width=.25\textwidth]{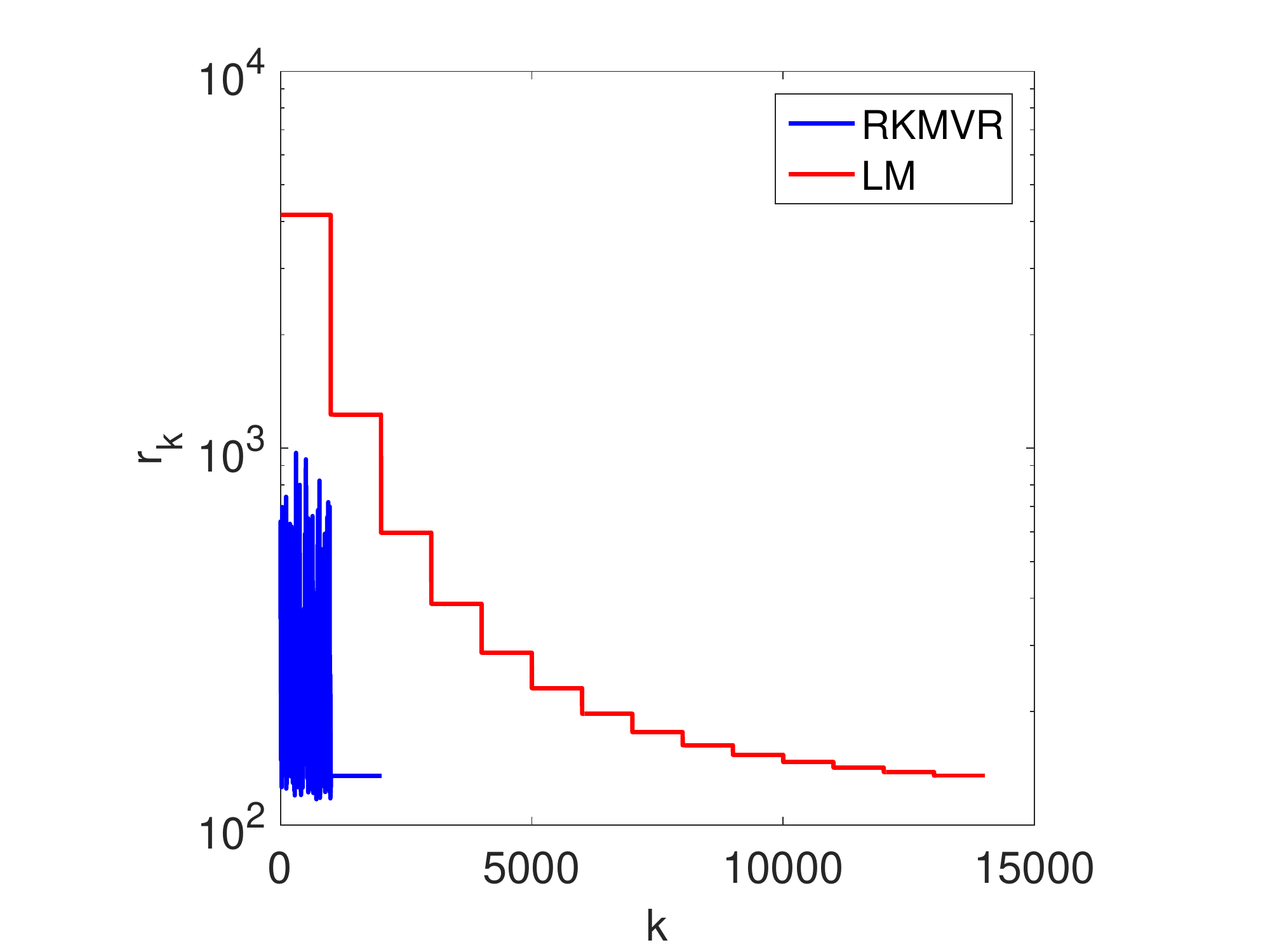} & \includegraphics[trim={2cm 0 2cm 0},clip,width=.25\textwidth]{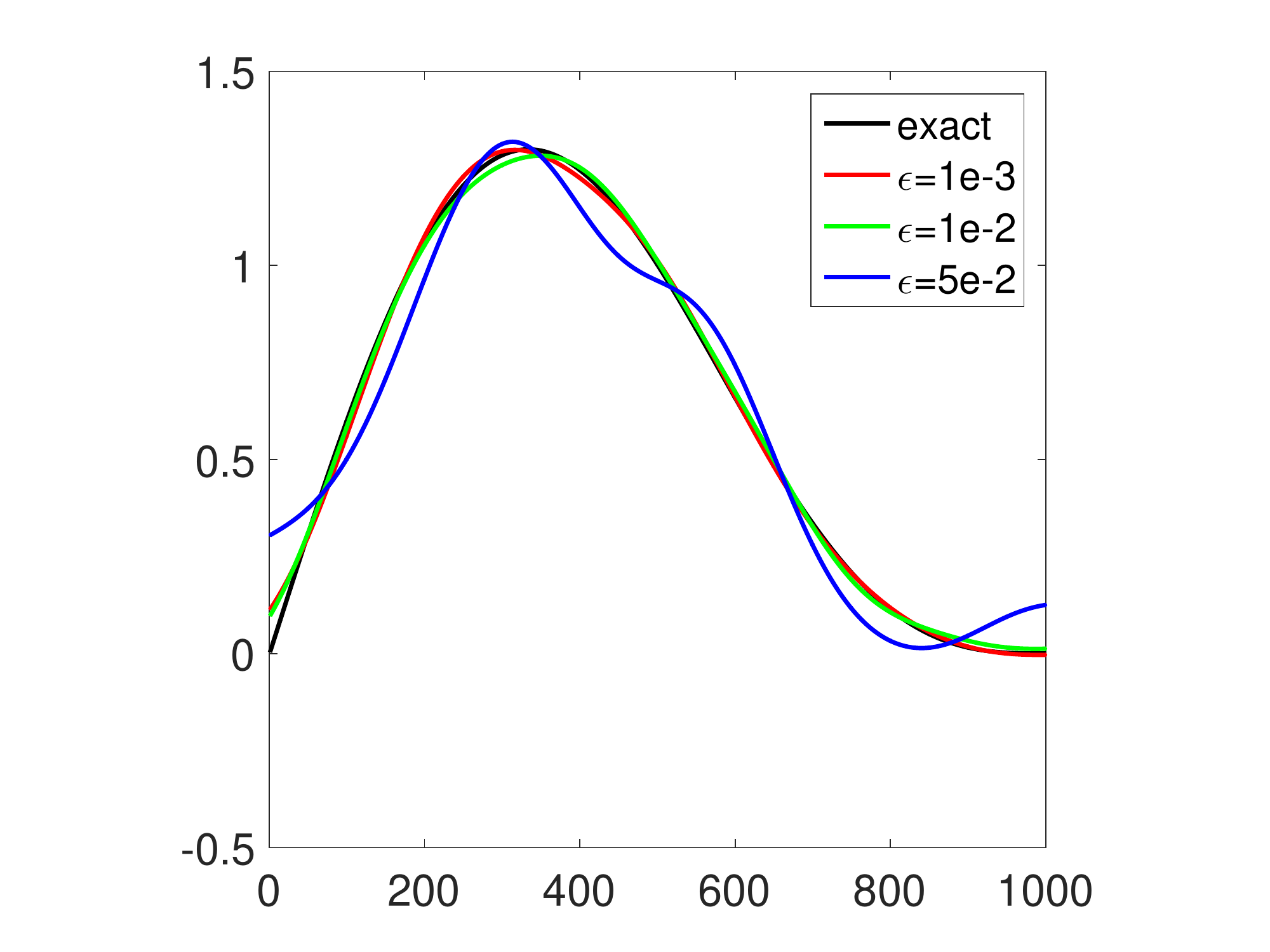}\\
  \includegraphics[trim={2cm 0 2cm 0},clip,width=.25\textwidth]{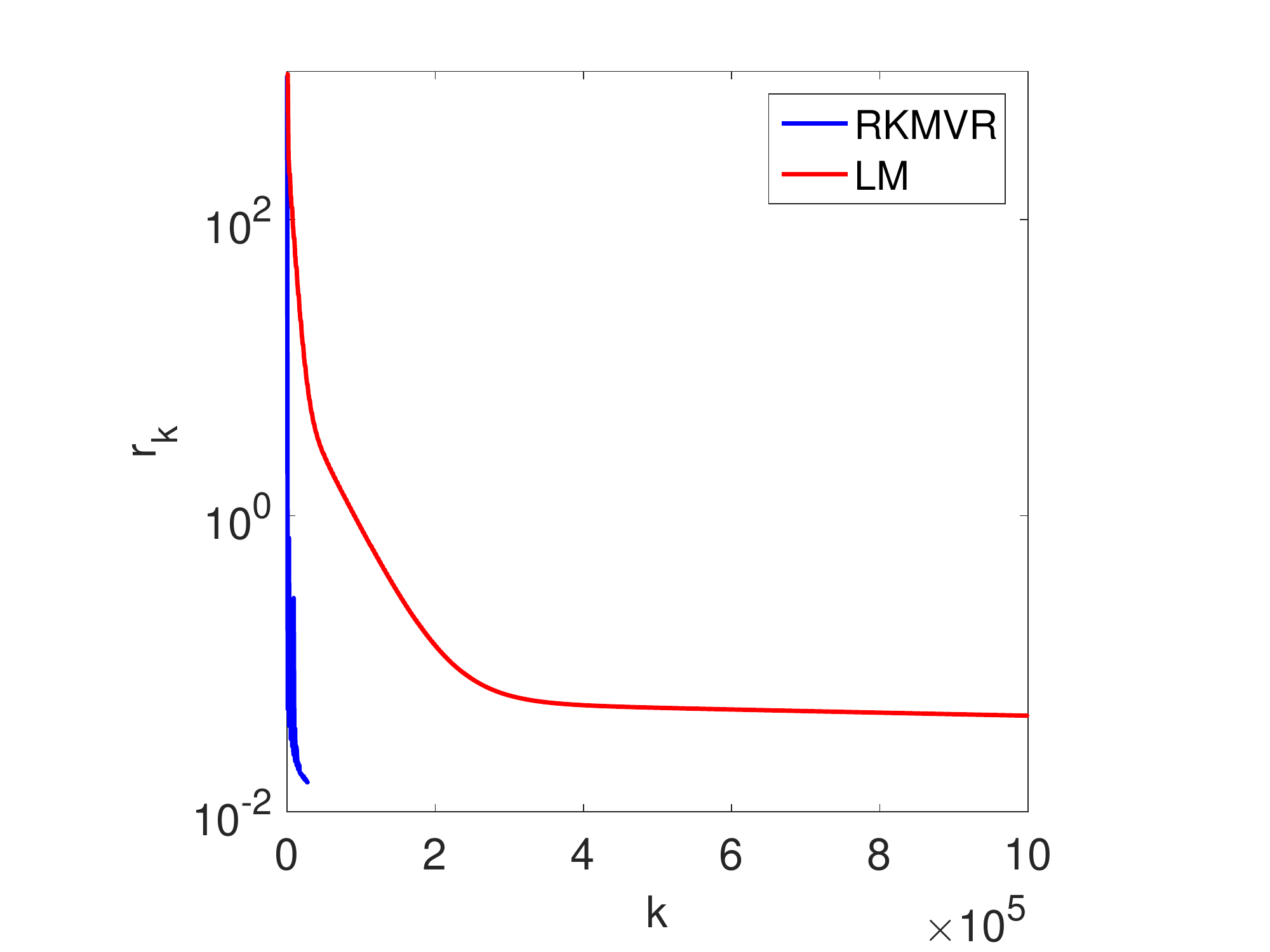} & \includegraphics[trim={2cm 0 2cm 0},clip,width=.25\textwidth]{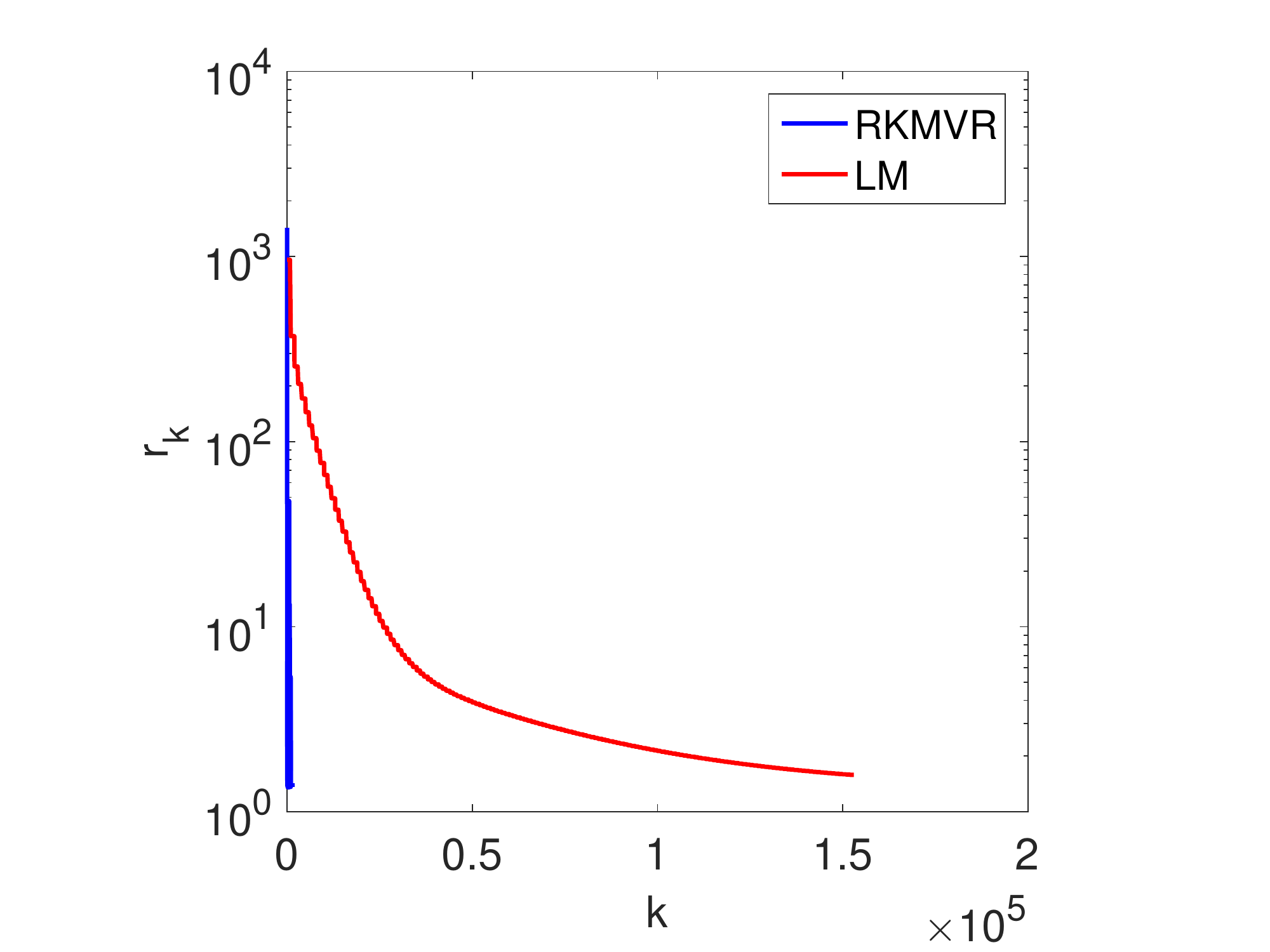}& \includegraphics[trim={2cm 0 2cm 0},clip,width=.25\textwidth]{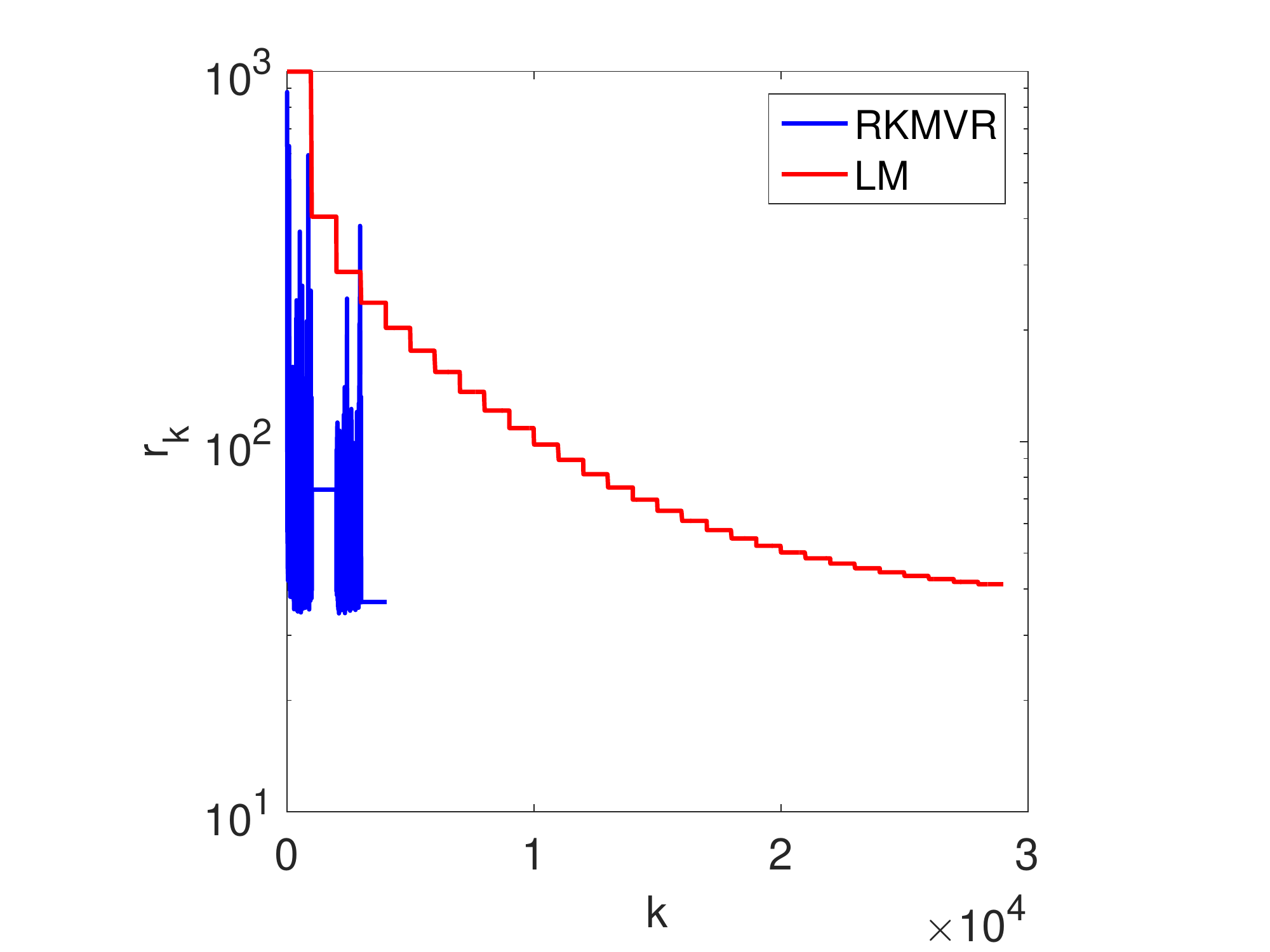} & \includegraphics[trim={2cm 0 2cm 0},clip,width=.25\textwidth]{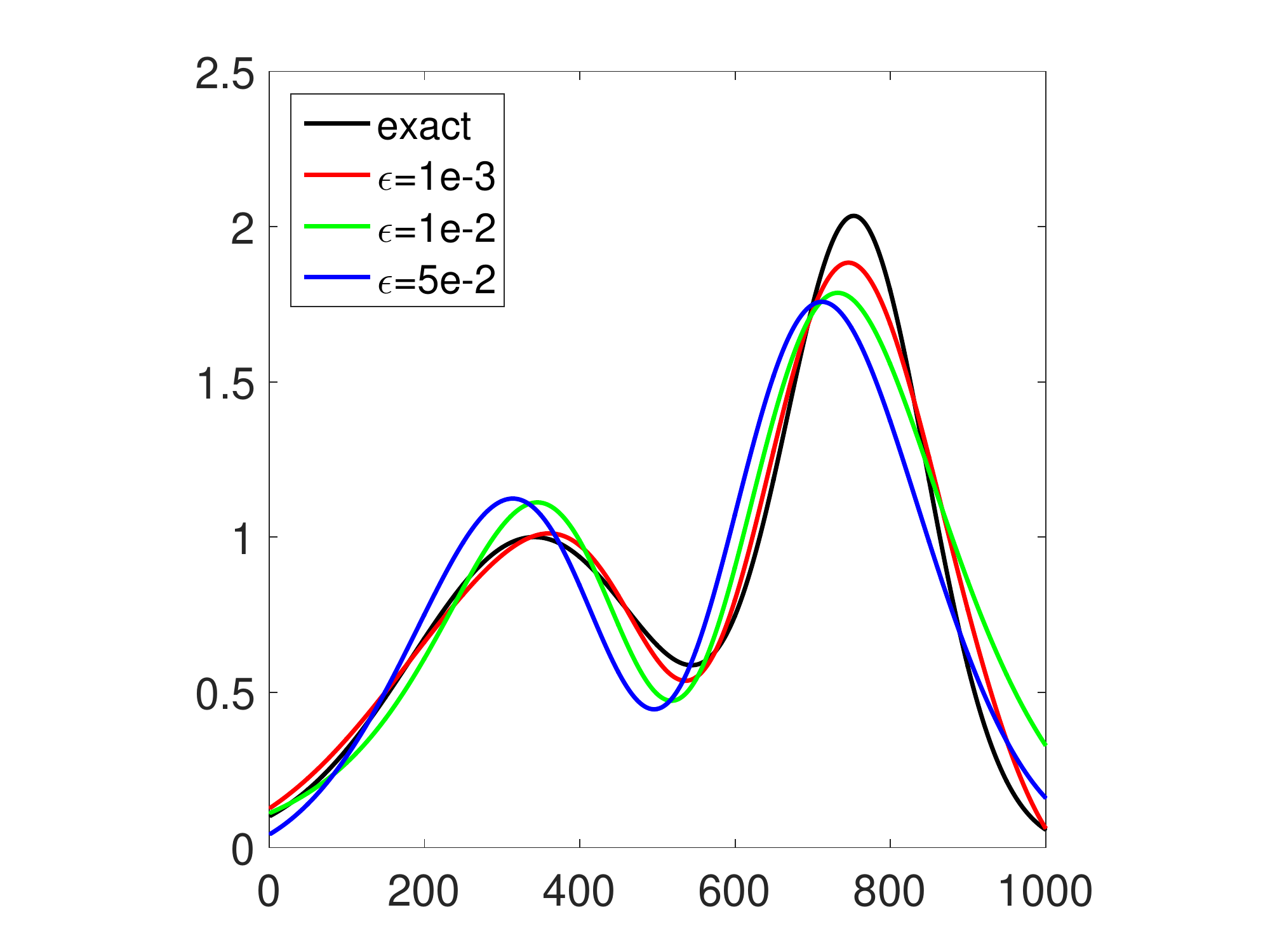}\\
  (a) $\delta=10^{-3}$ & (b) $\delta=10^{-2}$ & (c) $\delta=5\times10^{-2}$ & (d) solutions\\
  \end{tabular}
  \caption{The residual $r_k$ and the recoveries for \texttt{phillips} (top), \texttt{gravity} (middle), \texttt{shaw} (bottom) by RKMVR and
LM with the discrepancy principle \eqref{eqn:dp} with $\tau=1.1$.\label{fig:rkmvrdp}}
\end{figure}

\section{Conclusions}
We have presented an analysis of the preasymptotic convergence behavior of the randomized Kaczmarz method.
Our analysis indicates that the low-frequency error decays much faster than the high-frequency
one during the initial randomized Kaczmarz iterations. Thus, when the low-frequency modes are dominating
in the initial error, as typically occurs for inverse problems, the method enjoys very fast initial error
reduction. Thus this result sheds insights into the excellent practical performance of the method, which
is also numerically confirmed. Next, by interpreting it as a stochastic gradient method, we proposed a
randomized Kaczmarz method with variance reduction by hybridizing it with the Landweber method. Our numerical
experiments indicate that the strategy is very effective in that it can combine the strengths of both randomized
Kaczmarz method and Landweber method.

Our work represents only a first step towards a complete theoretical understanding of the randomized Kaczmarz
method and related stochastic gradient methods (e.g., variable step size, and mini-batch version) for efficiently solving
inverse problems. There are many important theoretical and practical questions awaiting further research.
Theoretically, one outstanding issue is the regularizing property (e.g., consistency, stopping criterion and
convergence rates) of the randomized Kaczmarz method from the perspective of classical regularization theory.

\section*{Acknowledgements}
The authors are grateful to the constructive comments of the anonymous referees, which have helped improve
the quality of the paper. In particular, the remark by one of the referees has led to much improved results as well as more concise proofs.
The research of Y. Jiao is partially supported by National Science Foundation of  China (NSFC) No. 11501579 and
National Science Foundation of  Hubei Province No. 2016CFB486, B. Jin by EPSRC grant EP/M025160/1 and UCL
Global Engagement grant (2016--2017), and X. Lu by NSFC Nos. 11471253 and 91630313.

\bibliographystyle{abbrv}
\bibliography{rkm}

\end{document}